\DeclareRobustCommand{\SkipTocEntry}[5]{}
\tikzset{middlearrow/.style={
		decoration={markings,
			mark= at position 0.5 with {\arrow{#1}} ,
		},
		postaction={decorate}
	}
}
\tikzset{firstthirdarrow/.style={
		decoration={markings,
			mark= at position 0.33 with {\arrow{#1}} ,
		},
		postaction={decorate}
	}
}
\tikzset{secondthirdarrow/.style={
		decoration={markings,
			mark= at position 0.66 with {\arrow{#1}} ,
		},
		postaction={decorate}
	}
}
\DeclareMathOperator{\sgn}{sgn}
\DeclareMathOperator{\Aut}{Aut}
\DeclareMathOperator{\Hom}{Hom}
\DeclareMathOperator{\Id}{Id}
\DeclareMathOperator{\Span}{Span}
\DeclareMathOperator{\Sp}{Sp}
\DeclareMathOperator{\GL}{GL}
\DeclareMathOperator{\SL}{SL}
\DeclareMathOperator{\Rep}{Rep}
\DeclareMathOperator{\spp}{\mathfrak{sp}}
\DeclareMathOperator{\Fix}{Fix}
\DeclareMathOperator{\Is}{Is}
\DeclareMathOperator{\TPA}{TPA}
\DeclareMathOperator{\Path}{Path}
\definecolor{NoteColor}{rgb}{1,0,0}
\newcommand{\R}{\mathbb R}
\newcommand{\RR}{\mathbb R}
\renewcommand{\SS}{\mathbb S}
\newcommand{\CC}{\mathbb C}
\newcommand{\HH}{\mathbb H}
\newcommand{\PP}{\mathbb P}
\newcommand{\Z}{\mathbb Z}
\newcommand{\ZZ}{\mathbb Z}
\newcommand{\T}{\mathcal T}
\newcommand{\Ome}[1]
{\begin{pmatrix}
		0 & #1\\
		-#1 & 0
\end{pmatrix}}
\theoremstyle{plain}
\newtheorem{teo}{Theorem}[section]
\newtheorem{theorem}[teo]{Theorem}
\newtheorem{cor}[teo]{Corollary}
\newtheorem{lem}[teo]{Lemma}
\newtheorem{lemma}[teo]{Lemma}
\newtheorem{prop}[teo]{Proposition}
\newtheorem{proposition}[teo]{Proposition}
\theoremstyle{definition}
\newtheorem{df}[teo]{Definition}
\newtheorem{definition}[teo]{Definition}
\theoremstyle{remark}
\newtheorem{rem}[teo]{Remark}
\newtheorem{remark}[teo]{Remark}
\newcommand{\bs}{\setminus}
\newcommand{\defin}{\emph}
\renewcommand{\bar}{\overline}
\begin{document}
	
	\title{On partial abelianization of framed local systems}
	
	\author[C. Kineider]{Clarence Kineider}
	
	\author[E. Rogozinnikov]{Eugen Rogozinnikov}
	
	\keywords{Framed local systems, decorated local systems, spectral networks, abelianization}
	
	\address{Clarence Kineider \\ Institut de Recherche Math\'ematique Avanc\'ee, Universit\'e de Strasbourg, Strasbourg, France}
	\email{clarence.kineider@math.unistra.fr}
	
	\address{Eugen Rogozinnikov \\ Institut de Recherche Math\'ematique Avanc\'ee, Universit\'e de Strasbourg, Strasbourg, France}
	\email{erogozinnikov@gmail.com}
	
	\begin{abstract}
	   D.~Gaiotto, G.~W.~Moore and A.~Neitzke introduced spectral networks to understand the framed $G$-local systems over punctured surfaces for $G$ a split Lie group via a procedure called abelianization. We generalize this construction to groups $G$ of the form  $\GL_2(A)$, where $A$ is a unital associative ring, and to some of its subgroups. This relies on a precise analysis of the degree 2 ramified coverings associated with spectral networks and triangulations and on a matrix reinterpretation of their path lifting rules; along the way we provide another proof of the Laurent phenomenon brought to light by A.~Berenstein and V.~Retakh. The partial abelianization enables us to gives parametrizations of the moduli spaces of decorated  $G$-local systems and of framed $G$-local systems over punctured surfaces. For $(A, \sigma)$ a Hermitian involutive $\R$-algebra the group $G=\Sp_2(A, \sigma)$ is a classical Hermitian Lie group of tube type, and we are able to identify and parametrize the moduli space of maximal framed $G$-local systems.
	\end{abstract}

	\thanks{E.R.\;thanks the Labex IRMIA of the Universit\'e de Strasbourg for support during the preparation of this article. C.K.\;thanks the ENS Rennes for support during the preparation of this article.}
	
	\maketitle
	
	\tableofcontents
	
	\section{Introduction}
	The theory of spectral networks was developed by D.\;Gaiotto, G.\,W.\;Moore and A.\;Neitzke \cite{gmn13, gmn14, hn16, moore} during their research on supersymmetric quantum field theory. However, the mathematical objects arising from this work proved to have an independent mathematical interest. The abelianization using spectral networks can be applied to the study of the geometry of the character varieties of surface groups into complex Lie groups and split real Lie groups~\cite{Alessandrini_vid}.
	
	Spectral networks can be seen as graphs on a degree $n$ ramified covering of a given surface. For $n>2$ a generic spectral network is an infinite graph that is dense on the surface, however finite spectral networks exist for every $n\geq 2$. The case $n=2$ is the simplest one, but the abelianization procedure in this case can be only applied to a very restricted class of split Lie groups of rank $1$ (e.g. $\SL_2(\RR)$, $\SL_2(\CC)$).
	
	The main purpose of this paper is to generalize the abelianization procedure described by D.\;Gaiotto, G.\,W.\;Moore and A.\;Neitzke to Lie groups $G$ that can be seen as $\GL_2(A)$ or some subgroups of $\GL_2(A)$ for some unital associative not necessarily commutative $\R$-algebra $A$. Although, such groups are not always split of rank $1$, the abelianization procedure can be partially applied for these groups. In this way, we can understand the structure of the moduli space of decorated and framed $G$-local systems over punctured surfaces.
	
	We now describe our results in more detail.
	
	Let~$S$ be a surface without boundary of negative Euler characteristic $\chi(S)$ with punctures (we refer to Section~\ref{sec:top_data} for the wider generality that can be allowed for~$S$, for example disks with marked points on the boundary). A \defin{decorated surface} is a surface as above together with a choice of a simple smooth loop (decorating loop) in a neighborhood of every puncture.
	
	Let $G$ be a subgroup of $\GL_2(A)$ for some unital associative not necessarily commutative $\R$-algebra $A$. A \defin{twisted $G$-local system} on $S$ is a local system on the unit tangent bundle $T'S$ of $S$ with the holonomy around the fiber of $T'S\to S$ equal to $-1$.
	
	Further, we consider $A^2$ (seen as the space of column vectors) as a right $A$-module. The group $G$ acts on $A^2$ by the left multiplication. A \defin{framing} of a twisted $G$-local system is a choice of a~parallel line $A$-subbundle in a neighborhood of every puncture. A \defin{decoration} of a twisted $G$-local system is a choice of a parallel regular section along every decorating loop. For a precise definition of those notions see Section~\ref{sec:ab_framed}. A twisted $G$-local system together with a framing (or decoration) is called a \defin{framed (resp. decorated) twisted $G$-local system}.
	
	Notice that a parallel regular section along a decorating loop always induces a parallel line $A$-subbundle in a neighborhood of the corresponding puncture. Hence, a decorated twisted $G$-local system always admits a natural framing.
	
	Fixing an ideal triangulation~$\mathcal{T}$ of $S$, we consider a subspace of the space of twisted (framed or decorated) $G$-local system, that are \defin{transverse} with respect to $\mathcal T$ (or just $\mathcal T$-transverse). Following \cite{gmn13,hn16}, we introduce the ramified covering $\Sigma\to S$ adapted to the triangulation $\mathcal T$ and the spectral network on $\Sigma$ as a graph that satisfy some axioms (for more detail we refer to Section~\ref{sec:spectral_networks}).
	
	For $\mathcal T$-transverse framed twisted $G$-local system we describe the twisted abelianization procedure using spectral networks adapted to the triangulation $\mathcal T$. The result of this procedure is a twisted $A^\times$-local system on $\Sigma$. We also show the converse, i.e. that for every twisted $A^\times$-local system on $\Sigma$ there exist a unique twisted $\mathcal T$-transverse framed $G$-local system for $G=\GL_2(A)$ (non-abelianization). We describe the abelianization and non-abelianization procedures by defining a path-lifting map from the \defin{twisted path algebra} (see Section~\ref{sec:tw_ab}) of $S$ to the twisted path algebra of $\Sigma$, and we show this map is homotopy-invariant.
	
	Using this construction, we define non-commutative $\mathcal{A}$-coordinates on the space of decorated twisted $G$-local systems, and using the path-lifting map we show that these coordinates provide a geometric realization of the non-commutative algebra introduced in \cite{BR}. This allows us to give a geometrical proof of the non-commutative Laurent phenomenon, first shown in \cite{BR}.
	
	Further, we use this abelianization procedure to understand the topology of $\mathcal T$-transverse (framed and decorated) twisted $G$-local systems:
	
	\begin{teo}
	    Let $S$ be a punctured orientable surface of negative Euler characteristic $\chi(S)$ without boundary. Then the moduli space of framed (twisted) $\GL_2(A)$-local systems on $S$ that are transverse to a fixed triangulation $\T$ is homeomorphic to the moduli space of  (twisted) $A^\times$-local systems on $\Sigma$ which is homeomorphic to $(A^\times)^{1-4\chi(S)}/A^\times$ where $A^\times$ acts diagonally by conjugation on $(A^\times)^{1-4\chi(S)}$.
		
		The moduli space of decorated twisted unipotent $\GL_2(A)$-local systems on $S$ that are transverse to a fixed triangulation $\T$ is homeomorphic to the product of the moduli space of twisted $A^\times$-local systems on $\bar\Sigma$ and $(A^\times)^{p}$ where $p$ is the number of punctures of $S$.
 
%	    Let $\bar S$ be a compact orientable surface with $k\geq 0$ boundary components $C_1,\dots,C_k$ and let $P$ be a finite set of points of $\bar S$ such that for all $i\in\left\lbrace 1,\dots,k\right\rbrace $, $n_i = \# (C_i\cap P) >0$. Let $p=\# (P\setminus\partial\bar S) $ and let $S=\bar S\setminus P$. 
%		Then the moduli space of framed (twisted) $\GL_2(A)$-local systems on $S$ that are transverse with respect to a fixed triangulation $\T$ is homeomorphic to the moduli space of  (twisted) $A^\times$-local systems on $\Sigma$ which is homeomorphic to $(A^\times)^{1-4\chi(\bar S)+2p+\sum n_i}/A^\times$ where $A^\times$ acts diagonally by conjugation on $(A^\times)^{1-4\chi(\bar S)+2p+\sum n_i}$.
		
%		The moduli space of decorated twisted unipotent $\GL_2(A)$-local systems on $S$ that are transverse to a fixed triangulation $\T$ is homeomorphic to the product of the moduli space of twisted $A^\times$-local systems on $\bar\Sigma$ and $(A^\times)^{p}$.
	\end{teo}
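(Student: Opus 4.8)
The plan is to leverage the abelianization/non-abelianization equivalence already set up in the excerpt, reducing the statement to a count of holonomy generators on $\Sigma$ (resp. $\bar\Sigma$) together with a description of the conjugation action.

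First I would recall that the paper establishes a homotopy-invariant path-lifting map inducing, for $G = \GL_2(A)$, a bijection between $\mathcal{T}$-transverse framed twisted $G$-local systems on $S$ and twisted $A^\times$-local systems on $\Sigma$, and that this bijection is compatible with gauge equivalence, hence descends to a homeomorphism of moduli spaces. So the first block of the theorem follows once I identify the moduli space of twisted $A^\times$-local systems on $\Sigma$ with $(A^\times)^{1-4\chi(S)}/A^\times$. Since $A^\times$ is abelian only when $A$ is commutative, "twisted local system" here means a homomorphism from (a $\Z/2$ extension of) $\pi_1(T'\Sigma)$ to $A^\times$ sending the fiber loop to $-1$; because the target is a group, such a homomorphism factors through $\pi_1(\Sigma)$ up to the sign twist, and is determined by the images of a set of free generators. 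The key topological input is that $\Sigma \to S$ is the specific degree-$2$ covering adapted to $\mathcal{T}$, branched at one point per triangle; a Riemann–Hurwitz computation gives $\chi(\Sigma) = 2\chi(S) - (\#\text{triangles})$, and for an ideal triangulation of a punctured surface with $p$ punctures one has $\#\text{triangles} = -2\chi(S)$ (equivalently $2\chi(S)$ edges after the usual Euler-characteristic bookkeeping $V - E + F$ with $V = p$), so $\chi(\Sigma) = 2\chi(S) - (-2\chi(S)) = 4\chi(S)$. Hence $\Sigma$ is an open surface with free $\pi_1$ of rank $1 - \chi(\Sigma) = 1 - 4\chi(S)$, giving $(A^\times)^{1-4\chi(S)}$ as the representation variety; quotienting by the overall conjugation (the only remaining gauge freedom, since $\Sigma$ is connected) yields the stated $(A^\times)^{1-4\chi(S)}/A^\times$. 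I would also remark that the fiber-loop-to-$-1$ condition does not change the count: it only fixes a consistent sign choice, not a dimension.

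For the second block I would argue analogously with $\bar\Sigma$ in place of $\Sigma$: decoration (rather than merely framing) pins down more data near each puncture, and the unipotency hypothesis on the $\GL_2(A)$-local system forces the puncture holonomy to be conjugate into the unipotent radical, so the near-puncture contribution decouples into a free $(A^\times)^p$ factor (one copy of $A^\times$ per puncture recording the residual parameter in the unipotent/decoration data), while the rest of the local system is again a twisted $A^\times$-local system on $\bar\Sigma$ — but now, because the decoration rigidifies the monodromy at punctures, there is no leftover global conjugation to quotient by, so we get a genuine product rather than a quotient. The precise form of $\bar\Sigma$ (the covering associated with the spectral network, as opposed to the triangulation-adapted $\Sigma$) and how the decoration trivializes the conjugation action are exactly the points where I would need to invoke the earlier sections carefully.

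The main obstacle, and the step I expect to require the most care, is the bookkeeping of the conjugation action and the boundary/puncture contributions in a way that is valid for \emph{non-commutative} $A$: one must check that "the moduli space of twisted $A^\times$-local systems on $\Sigma$" is literally $\Hom(\pi_1, A^\times)$ twisted by the sign, modulo global conjugation, with no hidden identifications coming from the twisting cocycle or from the ramification points of $\Sigma \to S$ (where the two sheets are exchanged and one must verify the lift of a small loop around a branch point behaves as expected under the $A^\times$-structure). Getting the edge/triangle count right for all allowed $S$ — including the more general surfaces-with-marked-points mentioned in the excerpt — and confirming that the exponent is uniformly $1 - 4\chi(S)$ is a routine but error-prone Euler-characteristic argument that I would carry out via Riemann–Hurwitz for $\Sigma \to S$ together with $V - E + F = \chi(S)$ for the triangulation. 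Once these combinatorial and gauge-theoretic points are nailed down, the homeomorphism statements are immediate consequences of the abelianization equivalence proved earlier.
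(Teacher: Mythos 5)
The first half of your argument is correct and is essentially the paper's route: non-abelianization identifies $\mathcal{T}$-transverse framed twisted $\GL_2(A)$-local systems with twisted $A^\times$-local systems on $\Sigma$; the twisting is innocuous because the relevant central extension of the free group $\pi_1(\Sigma)$ splits; and the exponent $1-4\chi(S)$ is the rank of $\pi_1(\Sigma)$. Your Riemann--Hurwitz bookkeeping (one branch point per triangle, $-2\chi(S)$ triangles for an ideal triangulation of a punctured surface without boundary, hence $\chi(\Sigma)=2\chi(S)+2\chi(S)=4\chi(S)$) agrees with Proposition~\ref{prop:topology_Sigma} and Corollary~\ref{counting}, which the paper invokes instead of recomputing.

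The second half has genuine gaps. First, you misidentify $\bar\Sigma$: it is not ``the covering associated with the spectral network, as opposed to the triangulation-adapted $\Sigma$'' --- it is simply the compactification of the same surface $\Sigma$ obtained by filling in the $2p$ points of $\pi^{-1}(P)$. The reason it replaces $\Sigma$ in the decorated/unipotent statement is that peripheral unipotency (existence of the parallel sections $v_p$ of $L_p$ and $w_p$ of $\mathcal{L}_p/L_p$) forces the abelianized holonomy around each of the $2p$ punctures of $\Sigma$ to be trivial, since near a sink lift $\mathcal{E}$ is the pullback of $L_p$ and near a source lift it is the pullback of $\mathcal{L}_p/L_p$; hence $\mathcal{E}$ extends over $\pi^{-1}(P)$ to a local system on $\bar\Sigma$. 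Your appeal to the peripheral holonomy being ``conjugate into the unipotent radical'' does not by itself yield this extension, which is the actual content needed. Second, the $(A^\times)^p$ factor is asserted rather than derived: a decoration supplies one parallel section of $\mathcal{E}$ at each of the $2p$ punctures of $\bar\Sigma$ (one for each $v_p$ at a sink, one for each $w_p$ at a source), and one must track how the residual gauge freedom acts on these $2p$ torsors over $A^\times$ to arrive at exactly a $p$-fold factor; nothing in your sketch does this. Relatedly, your claim that ``there is no leftover global conjugation to quotient by, so we get a genuine product rather than a quotient'' is inconsistent with the statement being proved, whose first factor --- the moduli space of twisted $A^\times$-local systems on $\bar\Sigma$ --- is itself a conjugation quotient.
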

	
	Finally, we introduce involutive algebras $(A,\sigma)$, i.e. unital, associative $\R$-algebras with the $\R$-linear map $\sigma\colon A\to A$ such that $\sigma(ab)=\sigma(b)\sigma(a)$ for all $a,b\in A$ and $\sigma^2=\Id$. Over involutive algebras the symplectic group can be defined as follows: $\Sp_2(A,\sigma):=\{g\in\GL_2(A)\mid \sigma(g)^t\omega g=\omega\}$ where $\omega=\begin{pmatrix}0 & 1 \\ -1 & 0\end{pmatrix}$. These groups were studied in~\cite{ABRRW}, and they are of particular interest for higher rank Teichm\"uller theory: For a special class of involutive algebras $(A,\sigma)$ called Hermitian algebras, the groups $\Sp_2(A,\sigma)$ are Hermitian of tube type. This gives rise to so-called maximal $\Sp_2(A,\sigma)$-local systems on $S$ and maximal representations of the fundamental group of $S$ into $\Sp_2(A,\sigma)$. Maximal local systems and maximal representations were introduced and studied in~\cite{BIW,BILW,Strubel}. They provide examples of so-called Higher Teichm\"uller spaces, i.e. subspaces of the character variety $\Rep(\pi_1(S),\Sp_2(A,\sigma))=\Hom(\pi_1(S),\Sp_2(A,\sigma))/\Sp_2(A,\sigma)$ that consist entirely of discrete and faithful representations. The topology of spaces of maximal representation for closed surfaces was studied in~\cite{Gothen, Guichard_Wienhard_InvaMaxi, BGPG, AC}, partly using the theory of Higgs bundles. In~\cite{AGRW}, the spaces of framed and decorated maximal representations into the real symplectic group $\Sp(2n,\R)$ are parametrized using a non-commutative analog of the Fock--Goncharov parametrization~\cite{FG} and the topology of them is studied.
	
	We introduce (framed and decorated) twisted $\Sp_2(A,\sigma)$-local system and describe the topology of the moduli space of $\mathcal T$-transverse framed twisted $\Sp_2(A,\sigma)$-local systems:
	
	\begin{teo}\label{intro:sympectic_local_systems}
		Using the same notations as in the previous theorem, the moduli space of framed (twisted) $\Sp_2(A,\sigma)$-local systems on $S$ that are transverse to a fixed triangulation $\T$ is homeomorphic to:
		$$\left(((A^\sigma)^\times)^{-2\chi(S)}\times (A^\times)^{1-\chi(S)}\right)/A^\times$$
		where $A^\sigma=\Fix_A(\sigma)$, $A^\times$ acts componentwisely by conjugation on $(A^\times)^{1-\chi(S)}$ and by congruence on $((A^\sigma)^\times)^{-2\chi(S)}$.
  
%		Using the same notations as in the previous theorem, the moduli space of framed (twisted) $\Sp_2(A,\sigma)$-local systems on $S$ that are transverse to a fixed triangulation $\T$ is homeomorphic to:
%		$$\left(((A^\sigma)^\times)^{-2\chi(\bar S)+2p-1+\sum n_i}\times (A^\times)^{1-\chi(\bar S)+p}\right)/A^\times$$
%		where $A^\sigma=\Fix_A(\sigma)$, $A^\times$ acts diagonally by conjugation on $(A^\times)^{1-\chi(\bar S)+p}$ and by congruence on $((A^\sigma)^\times)^{-2\chi(\bar S)+2p-1+\sum n_i}$.
	\end{teo}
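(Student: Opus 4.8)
The plan is to bootstrap from the previous theorem: the moduli space of $\T$-transverse framed twisted $\GL_2(A)$-local systems is already identified there with the moduli space of twisted $A^\times$-local systems on $\Sigma$, so it suffices to (i) characterize, among the latter, the ones that underlie a $\T$-transverse framed twisted $\Sp_2(A,\sigma)$-local system, and (ii) parametrize this sublocus together with the residual gauge action, in a way compatible with the congruence/conjugation actions claimed.

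For (i) I would establish the following symmetry principle. A framed twisted $\GL_2(A)$-local system $\rho$ reduces, compatibly with the framing, to $\Sp_2(A,\sigma)$ precisely when $\omega$ defines a flat nondegenerate $\sigma$-sesquilinear pairing on the associated rank-$2$ bundle, equivalently a flat isomorphism conjugating $\rho$ to its $\sigma$-contragredient $\gamma\mapsto\sigma(\rho(\gamma))^{-t}$ (this is just the rewriting $\omega\rho(\gamma)\omega^{-1}=\sigma(\rho(\gamma))^{-t}$ of the defining relation of $\Sp_2(A,\sigma)$). Feeding this through the abelianization of Section~\ref{sec:tw_ab} — which replaces $\rho$ by the twisted $A^\times$-local system $L$ on $\Sigma$ and recovers $\rho$ as the pushforward of $L$ along $\pi\colon\Sigma\to S$ with its spectral splitting over each triangle — the pairing $\omega$ becomes a perfect $\sigma$-pairing between the two spectral line sub-bundles over each triangle. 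Since the deck involution $\iota$ of the double cover interchanges these two sheets and abelianization is conjugation-invariant, this translates into: $L$ is \emph{$(\iota,\sigma)$-symmetric}, i.e.\ $\iota^*L\cong\sigma(L^\vee)$, or on holonomies $\mathrm{hol}_L(\iota_*\gamma)=\sigma(\mathrm{hol}_L(\gamma))^{-1}$ for all $\gamma$, up to the sign forced by the $-1$-twisting. The homotopy-invariance of the path-lifting map makes this equivalence reversible, so that the non-abelianization of an $(\iota,\sigma)$-symmetric $L$ is again a framed twisted $\Sp_2(A,\sigma)$-local system, transverse to $\T$.

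For (ii) I would choose, as in the proof of the previous theorem, a system of generators of the based twisted path groupoid of $\Sigma$ adapted to $\iota$: every generator is either interchanged with another one by $\iota$ or fixed by it, and among the fixed ones are the small loops $\delta_1,\dots,\delta_F$ around the $F=-2\chi(S)$ ramification points, for which $\iota_*\delta_j=\delta_j^{-1}$. On an interchanged pair the symmetry condition expresses the holonomy of one generator in terms of the other, leaving one free parameter in $A^\times$; on $\delta_j$ the condition reads $\mathrm{hol}_L(\delta_j^{-1})=\sigma(\mathrm{hol}_L(\delta_j))^{-1}$, i.e.\ $\mathrm{hol}_L(\delta_j)=\sigma(\mathrm{hol}_L(\delta_j))$, so $\mathrm{hol}_L(\delta_j)\in(A^\sigma)^\times$. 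Counting with $\chi(\Sigma)=4\chi(S)$, $F=-2\chi(S)$, $\pi_1(\Sigma)$ free of rank $1-4\chi(S)$, and $\rk H_1(\Sigma;\Q)^\iota=\rk H_1(S;\Q)=1-\chi(S)$ (transfer for $\Sigma\to\Sigma/\iota=S$), the symmetric data amounts to exactly $-2\chi(S)$ elements of $(A^\sigma)^\times$, one per ramification point, together with $1-\chi(S)$ elements of $A^\times$. The residual gauge group is $A^\times$, acting through constant frame changes: by simultaneous conjugation on the $A^\times$-parameters, and, since a frame change transforms the $\sigma$-sesquilinear datum by congruence, by $x\mapsto\sigma(g)xg$ on each ramification parameter — and $(A^\sigma)^\times$ is stable under congruence. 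This produces the homeomorphism onto $\bigl(((A^\sigma)^\times)^{-2\chi(S)}\times(A^\times)^{1-\chi(S)}\bigr)/A^\times$; bicontinuity is automatic, since in each direction the map is given by matrix products and rational expressions on the loci where the relevant elements are invertible.

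The main obstacle is item (i) with the correct normalizations: tracking precisely how $\omega$ and $\sigma$ interact with the matrix path-lifting rules and with the $-1$-twisting of $\Sigma\to S$, so that the condition on $L$ is literally $\mathrm{hol}_L\circ\iota_*=\sigma(\mathrm{hol}_L)^{-1}$ and not off by a cocycle or a sign, and then reading off near each ramification point exactly $(A^\sigma)^\times$ with the congruence action and not a twisted form of it. A secondary but genuine point is the exponent bookkeeping in (ii): checking that the $\iota$-adapted presentation really yields $-2\chi(S)$ congruence-parameters and $1-\chi(S)$ conjugation-parameters once the punctures of $\Sigma$ lying above those of $S$ are accounted for.
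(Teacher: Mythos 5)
Your top-level strategy is the paper's: reduce via (non-)abelianization to twisted $A^\times$-local systems on $\Sigma$, express the symplectic condition as a $\sigma$-twisted compatibility with the deck involution $\theta$, and read off parameters from an adapted presentation of $\pi_1(\Sigma)$. But the mechanism in your steps (i)--(ii) has a genuine gap, and the point you flag as ``the main obstacle'' is exactly where it breaks. The correct abelianized form of the symplectic condition (Theorem~\ref{symplectic}) is not the holonomy identity $\mathrm{hol}(\theta_*\gamma)=\sigma(\mathrm{hol}(\gamma))^{-1}$ but the existence of a parallel non-degenerate sesquilinear pairing $\beta$ between $\mathcal E$ and $\theta^*\mathcal E$. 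At a base ramification point this pairing has a value $a_0=\beta_{\tilde b}(x,P_{s^+}(x))\in (A^\sigma)^\times$, and the holonomy identity one actually obtains is conjugated by it: $a'_\gamma=a_0^{-1}\sigma(a_\gamma)^{-1}a_0$. This $a_0$ is itself a modulus --- it is the Kashiwara--Maslov datum of the base triangle --- and it is precisely what makes the $A^\times$-action on the $(A^\sigma)^\times$-factors a congruence action; normalizing it away, as your ``literal'' identity implicitly does, would cut the residual gauge group down to the unitary group of $(A,\sigma)$ and yield a different quotient.

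Second, your source for the $(A^\sigma)^\times$-parameters does not work: small loops around the ramification points of $\Sigma$ are null-homotopic in $\Sigma$ and carry trivial holonomy (this is the content of Lemma~\ref{lem:triang_rel}, which is what allows $\mathcal E$ to extend over the branch locus), so they contribute nothing. Your adapted generating set also overcounts: $2(1-\chi(S))$ interchanged generators plus $-2\chi(S)$ inverted ones gives $2-4\chi(S)$ generators, one more than $\mathrm{rk}\,\pi_1(\Sigma)=1-4\chi(S)$. The correct bookkeeping, as in the paper, is: there are $-2\chi(S)-1$ anti-invariant generators $\xi$, one for each ramification point \emph{other than} the base one (the closed lift of an arc joining the two branch points); for these the symmetry condition reads $a_\xi=a_0^{-1}\sigma(a_\xi)a_0$, i.e.\ $a_0a_\xi\in A^\sigma$, and $a_0a_\xi$ is the pairing value at the other branch point. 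Together with $a_0$ itself this yields the $-2\chi(S)$ congruence parameters, one per branch point. Your final count comes out right only because the missing pairing datum compensates the extra generator; as written, the argument would not produce the claimed congruence action.
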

	
	For Hermitian $A$, we also introduce maximal (framed and decorated) twisted $\Sp_2(A,\sigma)$-local system and describe the topology of the moduli space of maximal framed twisted symplectic local systems:
	
	\begin{teo}
		If $A$ is Hermitian, then the moduli space of framed (twisted) maximal $\Sp_2(A,\sigma)$-local systems on $S$ is homeomorphic to:
		$$\left((A^\sigma_+)^{-2\chi(S)}\times (A^\times)^{1-\chi(S)}\right)/A^\times$$
		where $A^\sigma_+=\{a^2\mid a\in (A^\sigma)^\times\}$, $A^\times$ acts componentwisely by conjugation on $(A^\times)^{1-\chi(S)}$ and by congruence on $(A^\sigma_+)^{-2\chi(S)}$.
  
%		If $A$ is Hermitian, then the moduli space of framed (twisted) maximal $\Sp_2(A,\sigma)$-local systems on $S$ is homeomorphic to:
%		$$\left((A^\sigma_+)^{-2\chi(\bar S)+2p-1+\sum n_i}\times (A^\times)^{1-\chi(\bar S)+p}\right)/A^\times$$
%		where $A^\sigma_+=\{a^2\mid a\in (A^\sigma)^\times\}$, $A^\times$ acts componentwisely by conjugation on $(A^\times)^{1-\chi(\bar S)+p}$ and by congruence on $(A^\sigma_+)^{-2\chi(\bar S)+2p-1+\sum n_i}$.
	\end{teo}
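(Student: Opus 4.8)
The plan is to deduce this from Theorem~\ref{intro:sympectic_local_systems} by identifying the maximality condition with a positivity condition on the non-commutative coordinates and by showing that maximal framed local systems are automatically transverse to every triangulation.

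First I would recall the explicit shape of the homeomorphism in Theorem~\ref{intro:sympectic_local_systems}: for a fixed ideal triangulation $\T$, it sends a $\T$-transverse framed twisted $\Sp_2(A,\sigma)$-local system to its tuple of non-commutative $\mathcal A$-coordinates, which splits into one ``triangle invariant'' $t_\triangle\in (A^\sigma)^\times$ for each of the $-2\chi(S)$ triangles $\triangle$ of $\T$ (a non-commutative triple ratio of the three framing lines at the vertices of $\triangle$, which is symmetric because of the symplectic form) together with $1-\chi(S)$ further coordinates in $A^\times$ that are not constrained by the symplectic condition, the residual $A^\times$ acting componentwise by conjugation on the latter and by congruence $a\mapsto \sigma(g)ag$ on the former. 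Next I would unwind the notion of a maximal framed twisted $\Sp_2(A,\sigma)$-local system introduced earlier: for Hermitian $A$ the group $\Sp_2(A,\sigma)$ is Hermitian of tube type, maximality is measured by the Toledo/Maslov cocycle, and the key input is the non-commutative analogue of the Burger--Iozzi--Wienhard/Strubel positivity criterion, namely that, relative to $\T$, this bounded cocycle is a sum of local contributions of the triangles of $\T$ and that the contribution of $\triangle$ is extremal exactly when the three framing lines at its vertices are pairwise transverse and $t_\triangle$ is positive, i.e.\ $t_\triangle\in A^\sigma_+$.

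From this criterion I would extract two facts. First, a maximal framed twisted $\Sp_2(A,\sigma)$-local system is $\T$-transverse for \emph{every} ideal triangulation $\T$: transversality of the framing lines at the two endpoints of an edge follows from extremality of (either of) the two triangles adjacent to that edge, and maximality (being cohomological) is independent of the triangulation chosen to test it. Hence the moduli space of maximal framed twisted $\Sp_2(A,\sigma)$-local systems is genuinely a subspace of the moduli space of $\T$-transverse framed ones appearing in Theorem~\ref{intro:sympectic_local_systems}. Second, within that space the maximal locus is cut out precisely by the conditions $t_\triangle\in A^\sigma_+$ for all $\triangle$, so under the homeomorphism of Theorem~\ref{intro:sympectic_local_systems} it corresponds to $(A^\sigma_+)^{-2\chi(S)}\times (A^\times)^{1-\chi(S)}$ inside $((A^\sigma)^\times)^{-2\chi(S)}\times (A^\times)^{1-\chi(S)}$.

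Finally I would check that this subspace is $A^\times$-saturated and conclude. Using the description of the positive cone of a Hermitian algebra as $A^\sigma_+=\{\sigma(x)x\mid x\in A^\times\}=\{b^2\mid b\in (A^\sigma)^\times\}$, one sees that the congruence action preserves it, since $\sigma(g)\,\sigma(x)x\,g=\sigma(xg)(xg)\in A^\sigma_+$; and since $A^\sigma_+$ is an open subset, in fact a union of connected components, of $(A^\sigma)^\times$, the maximal locus is exactly the (open, $A^\times$-invariant) preimage claimed. Restricting the homeomorphism of Theorem~\ref{intro:sympectic_local_systems} to this saturated subspace and its image, and passing to $A^\times$-quotients (the same action on both sides), yields the homeomorphism with $\left((A^\sigma_+)^{-2\chi(S)}\times (A^\times)^{1-\chi(S)}\right)/A^\times$. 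The main obstacle is the second step, the non-commutative positivity criterion: pinning down the Toledo cocycle for $\Sp_2(A,\sigma)$, showing it localizes over the triangles of $\T$, and proving that extremality of a triangle is equivalent to $t_\triangle\in A^\sigma_+$; once that is in place, the rest is bookkeeping with the parametrization already established.
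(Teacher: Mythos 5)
Your proposal follows essentially the same route as the paper: the paper also restricts the parametrization of the framed symplectic moduli space to the locus where the triangle invariants (there realized as the elements $a_0$ and $a_0a_\xi$ attached to ramification points, whose signatures are shown to equal the Kashiwara--Maslov indices of the corresponding triangles) lie in $A^\sigma_+$, invoking the criterion that maximality is equivalent to all triangle indices being maximal. You are in fact somewhat more explicit than the paper on two points it leaves implicit --- that maximal framed local systems are automatically transverse to every triangulation (needed since the theorem drops the transversality hypothesis) and that the positive cone is saturated under the congruence action --- but the core argument is the same.
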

	This provides a new proof of the result of~\cite{AGRW,GRW,R-Thesis}.
	
%	\noindent
    \addtocontents{toc}{\SkipTocEntry}
    \subsection*{Structure of the paper:}
	In Section~\ref{sec:top_data} we introduce the topological and combinatorial data needed for the abelianization process, such as ramified coverings and a special class of graphs on them called spectral networks. We define the path-lifting map related to the spectral network. In Section~\ref{sec:ab_framed} we describe the partial abelianization and non-abelianization processes for framed twisted $\GL_2(A)$-local systems. In Section~\ref{sec:ab_decorated} we apply this construction to decorated twisted $\GL_2(A)$-local systems, and relate it to the non-commutative algebra introduced in \cite{BR}. We also describe the topology of the moduli space of both framed and decorated twisted $\GL_2(A)$-local systems that are transverse with respect to a fixed ideal triangulation. In Section~\ref{sec:symplectic_case} we specify this construction for $\Sp_2(A,\sigma)$-local systems, and describe the topology of the moduli space of maximal framed twisted symplectic local systems.

    \addtocontents{toc}{\SkipTocEntry}
    \subsection*{Acknowledgements}
    We thank Daniele~Alessandrini, Olivier~Guichard and Anna~Wienhard for helpful and interesting discussions about some aspects of this article.
	
	\section{Topological and combinatorial data}\label{sec:top_data}
	
	\subsection{Punctured surface}
	
	Let $\bar S$ be a compact orientable smooth surface of finite type with or without boundary. Let $P$ be a nonempty finite subset of $\bar S$ such that on every boundary component of $\bar S$ there is at least one element of $P$. We define $S:=\bar S\bs P$.  Elements of $P$ are called \defin{punctures} of $S$. Sometimes we will distinguish between elements of $P$ that lie in the interior of $\bar S$ -- \defin{internal punctures} and that lie on the boundary -- \defin{external punctures}. Surfaces that can be obtained in this way are called \defin{punctured surfaces}, with the exception of the (closed) disk with one or two punctures on the boundary and the sphere with one or two punctures. Every punctured surface can be equipped with a complete hyperbolic structure of finite volume with totally geodesic boundary. For every such hyperbolic structure, all the internal punctures are cusps and all boundary curves are (infinite) geodesics. Once equipped with a hyperbolic structure as above, the universal covering $S'$ of $S$ can be seen as a closed convex subset of the hyperbolic plane $\HH^2$ with totally geodesic boundary, which is invariant under the natural action of $\pi_1(S)$ on $\HH^2$ by the holonomy representation. Punctures of $S$ are lifted to points of the ideal boundary of $\HH^2$ which we call punctures of $S'$ and denote their set by $P'\subseteq \partial_\infty S'\subseteq \partial_\infty\HH^2$. Notice, if $\bar S$ does not have boundary, then $S'$ is the entire $\HH^2$.
	
	An \defin{ideal triangulation} of $S$ is a triangulation with oriented edges of $\bar S$ whose set of vertices agrees with $P$, such that, if $\gamma$ is an edge of the triangulation, then the opposite edge $\bar\gamma$ is also an edge of this triangulation. We always consider edges of an ideal triangulation as homotopy classes of oriented paths (relative to their endpoints) connecting points in $P$. Connected components of the compliment on $S$ to all edges of an ideal triangulation $\mathcal T$ are called \defin{faces} or \defin{triangles} of $\mathcal T$. Every edge belongs to the boundary of one or two triangles. In the first case, an edge is called \defin{external}, in the second -- \defin{internal}. Any ideal triangulation of $S$ can be represented by an ideal geodesic triangulation as soon as a hyperbolic structure as above on $S$ is chosen.
	
	\subsection{Ramified covering}
	
	Let $\mathcal T$ be an ideal triangulation of $S$. We can endow $\bar S$ with a Euclidean structure with conical points by choosing for each triangle $T$ of $\mathcal{T}$ an orientation preserving diffeomorphism $\varphi_T : \mathsf{T}\to T$ where $\mathsf{T}$ is the Euclidean triangle in $\RR^2 = \CC$ with vertices $1$, $j = e^{\frac{2i\pi}{3}}$ and $j^2$. Then for each gluing of two triangles (not necessarily distinct) in $S$, glue the corresponding Euclidean triangles with the composition of a rotation and a translation.
	The conical points of this structure are exactly the points in $P$, meaning that this structure once restricted to $S$ is smooth.
	Let $B = \left\lbrace \varphi_T(0)~|~T \text{ triangle of }\mathcal{T} \right\rbrace \subset S$. There is one point of $B$ in the interior of each triangle of $\mathcal{T}$. With this data, we can construct a two-fold branched covering $\pi\colon\bar\Sigma\to \bar S$ such that the branched points are precisely elements of $B$ and $\bar\Sigma$ has a Euclidean structure.
	Let $\mathsf{H}$ be the Euclidean hexagon with vertices the sixth roots of unity in $\CC$. Then the map $z\mapsto z^2$ is a ramified covering from $\mathsf{H}$ to $\mathsf{T}$ that has exactly one ramification of order 2 at the point 0. Then take as many copies of $\mathsf{H}$ as there are triangles in $\mathcal{T}$ and for each gluing of two triangles (not necessarily distinct) in $S$, glue the corresponding Euclidean hexagons on both edges that are mapped to the glued edge in $S$ with rotation and a translation.
	
	\begin{figure}[h!]
		\includegraphics[scale=0.5]{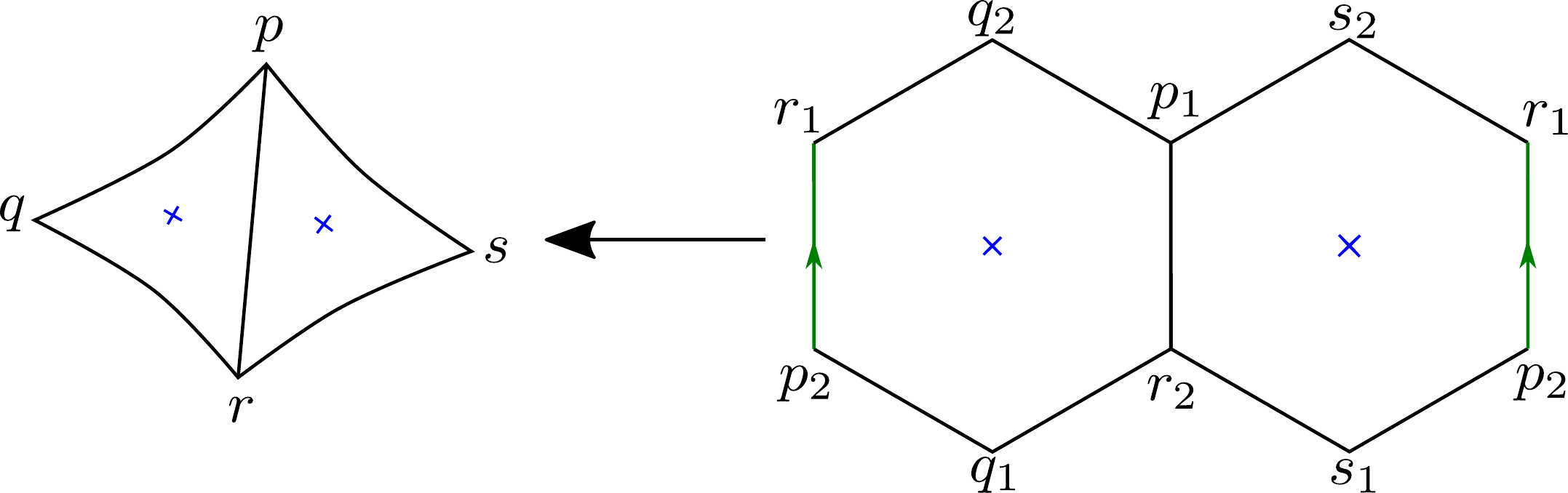}
		\caption{The ramified two-fold covering of two glued triangles. The preimages of $p$ are $p_1$ and $p_2$, same for $q,r,s$. The branched points are the blue crosses. The two outer edges with an arrow are glued according to arrow orientation.}
	\end{figure}
	
	This defines a two-fold ramified covering $\pi : \bar{\Sigma}\to\bar{S}$ with ramification points at $B$, and the conical points of $\bar\Sigma$ are a subset of $\pi^{-1}(P)$. This means the map $\pi$ restricted to $\Sigma = \bar{\Sigma}\setminus\pi^{-1}(P)$ is a smooth two-fold branched covering from $\Sigma$ to $S$, with simple ramifications on points of $B$.
	The lift $\T^*:=\pi^{-1}(\mathcal T)$ of $\mathcal T$ to $\Sigma$ induces a hexagonal tiling of $\Sigma$ such that in every hexagon there is exactly one element of $\pi^{-1}(B)$.
	
	\begin{rem}
		Obviously, triangles of $\mathcal T$ are in 1:1-correspondence with elements of $B$, and hexagons of $\pi^{-1}(\mathcal T)$ are also in 1:1-correspondence with elements of $B$.
	\end{rem}
	
	\begin{definition}
		For a smooth manifold $X$, denote $T'X$ the spherical quotient of $TX$, i.e. $T'X = T^pX/\RR^*_+$ where $T^pX$ is the punctured tangent bundle of $X$ and the group $\RR^*_+$ acts fiberwise by multiplication. The space $T'X$ is then a sphere bundle over $X$, and we will write an element of $T'X$ as an ordered pair $(x,v)$ with $x\in X$ and $v$ a non-zero vector in $T_xX$, identified with the half-line it spans. With a slight abuse of terminology, we will call this sphere bundle the \defin{unit tangent bundle} of $X$.
	\end{definition}

	\begin{rem}
		Since the map $\pi$ is a local diffeomorphism on $\Sigma\setminus \pi^{-1}(B)$, it induces the tangent (differential) map $d\pi\colon T(\Sigma\setminus \pi^{-1}(B))\to T(S\setminus B)$ that factorizes to unit tangent bundles $T'(\Sigma\setminus \pi^{-1}(B))\to T(S\setminus B)'$. In order to simplify the notation, we will sometime write $\pi\colon T\Sigma\to TS$ and $\pi\colon T'\Sigma\to T'S$ instead of $d\pi$.
	\end{rem}
	
	\begin{rem}
		The unit tangent bundle of $\mathsf{H}$ is canonically identified to $\mathsf{H}\times \SS^1$ as $\mathsf{H}$ is a subset of $\R^2$. With this identification, the preimages by $d\pi$ of $(x,v)\in T'S$ are of the form $(x_1,v')$ and $(x_2,-v')$ where $x_1$ and $x_2$ are the preimages of $x$ by $\pi$.
	\end{rem}
	
	The following proposition describe the topology of the ramified covering $\Sigma$:
	
	\begin{proposition}\label{prop:topology_Sigma}
		Let $\bar S$ be a compact orientable surface with $k\geq 0$ boundary components $C_1,\dots,C_k$ and let $P$ be a finite set of points of $\bar S$ such that for all $i\in\left\lbrace 1,\dots,k\right\rbrace $, $n_i = \# (C_i\cap P) >0$. Let $k_e$ (resp. $k_o$) be the number of components of $\partial\bar S$ with an even (resp. odd) number of punctures, such that $k=k_e+k_o$. Let $p=\# (P\setminus\partial\bar S) $, let $g$ be the genus of $\bar S$ and let $S=\bar S\setminus P$. Then the two-fold ramified covering $\Sigma = \bar\Sigma\setminus\pi^{-1}(P)$ of $S$ is a surface such that:
		\begin{itemize}
			\item $\bar\Sigma$ is a compact orientable surface of genus
			$$g'=\frac{1}{2}\left(2p+2k_e+3k_o+8g-6 +\sum_{i=1}^k n_i\right),$$
			\item for each of the $k_e$ boundary components $C$ of $\bar S$ with even number $n$ of punctures, $\pi^{-1}(C)$ is the union of two distinct boundary components in $\bar\Sigma$, each with $n$ punctures,
			\item for each of the $k_o$ boundary components $C$ of $\bar S$ with odd number $n$ of punctures, $\pi^{-1}(C)$ is one boundary component in $\bar\Sigma$ with $2n$ punctures,
			\item $\Sigma$ has $2p$ internal punctures.
		\end{itemize}
	\end{proposition}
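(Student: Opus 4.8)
The plan is to reduce the statement to two inputs, the number of branch points of $\pi$ and the monodromy of the double cover around each boundary component, and then let the genus be forced by the Euler characteristic.

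First I would count the triangles of $\mathcal{T}$. Viewing $\mathcal{T}$ as a CW structure on $\bar S$ with vertex set $P$ (so $V=p+\sum_i n_i$ vertices), with $E$ edges and $f$ triangles, the punctures on $\partial\bar S$ subdivide each $C_i$ into $n_i$ boundary edges, so the edge--triangle incidence count gives $3f=2(E-\sum_i n_i)+\sum_i n_i$; together with $V-E+f=\chi(\bar S)=2-2g-k$ this yields $f=4g+2k+2p+\sum_i n_i-4$. By construction $\pi$ is a degree $2$ covering, branched exactly over the $f$ points of $B$ (one interior point per triangle) with ramification index $2$ at each, and unramified over a neighbourhood of $\partial\bar S$. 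Choosing a CW structure on $\bar S$ with $B$ among its $0$-cells and $\partial\bar S$ a subcomplex and lifting it along $\pi$, every open cell has two preimage cells except the $f$ branch points, which have one; hence $\chi(\bar\Sigma)=2\chi(\bar S)-f=8-8g-4k-2p-\sum_i n_i$. Moreover the monodromy homomorphism $\mu\colon\pi_1(\bar S\setminus B)\to\mathbb{Z}/2$ classifying the cover sends a small loop around any point of $B$ to $1$, so $\mu$ is onto and $\bar\Sigma$ is connected.

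Next I would treat the punctures. A small loop around an interior puncture $v$ bounds a small disc in $\bar S$ which, chosen small enough, avoids $B$; hence it is null-homotopic in $\bar S\setminus B$, is killed by $\mu$, and $\pi$ is unramified over $v$ with exactly two preimages, proving the last bullet that $\Sigma$ has $2p$ interior punctures. For a boundary component, since $C_i$ is disjoint from $B$ the restriction $\pi^{-1}(C_i)\to C_i$ is a double covering of a circle, so it is either two circles (when $\mu(C_i)=0$) or one circle double-covering $C_i$ (when $\mu(C_i)=1$); correspondingly $\pi^{-1}$ of the $n_i$ punctures of $C_i$ produces $n_i$ punctures on each of two boundary circles, or $2n_i$ punctures on a single boundary circle, and in every case $\pi$ is unramified over each such puncture. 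So everything comes down to showing $\mu(C_i)\equiv n_i\pmod 2$, and then the number of boundary circles of $\bar\Sigma$ is $2k_e+k_o$ with the asserted distribution of punctures; since $\bar\Sigma$ is a connected compact orientable surface with $2k_e+k_o$ boundary circles its genus satisfies $2-2g'-(2k_e+k_o)=\chi(\bar\Sigma)$, and using $k=k_e+k_o$ and solving gives $g'=\tfrac12\bigl(2p+2k_e+3k_o+8g-6+\sum_i n_i\bigr)$, the claimed formula.

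For the identity $\mu(C_i)\equiv n_i\pmod 2$ I would argue directly from the explicit hexagonal model of $\pi$. Writing $C_i=e_1\cdots e_{n_i}$ with $v_j$ the vertex between $e_{j-1}$ and $e_j$, one lifts a thin collar of $C_i$ hexagon by hexagon and tracks in which of the two sheets the lift of a collar point lies as it runs along the lift of each $e_j$ and then turns the corner at $v_j$ through the fan of hexagons sitting over the triangles that meet $C_i$ at $v_j$; using the picture of the two-fold ramified covering of two glued triangles, one verifies that turning the corner at each $v_j$ interchanges the two sheets exactly once, independently of how many triangles meet there, so that going once around $C_i$ realizes the deck involution to the power $n_i$. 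This is the one genuinely non-formal step, and the place I expect the main difficulty: unlike an interior puncture, a boundary component is not null-homotopic in $\bar S\setminus B$, so its monodromy really depends on the chosen hexagon gluings and must be read off from the construction, the delicate point being that ``a corner turn flips the sheet'' has to hold irrespective of the local combinatorics of $\mathcal{T}$ at the boundary vertices.
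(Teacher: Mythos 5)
Your global strategy coincides with the paper's: count the branch points as the triangles of $\mathcal{T}$ (your count $f=-2\chi(\bar S)+2p+\sum n_i$ is exactly theirs), apply Riemann--Hurwitz to get $\chi(\bar\Sigma)=2\chi(\bar S)-f$, reduce the boundary behaviour to the parity statement $\mu(C_i)\equiv n_i\pmod 2$ for the $\ZZ/2$-monodromy, and solve for $g'$. All of that bookkeeping is correct. The difference — and the problem — is in how you propose to establish $\mu(C_i)\equiv n_i\pmod 2$, which you rightly identify as the only non-formal step and then do not actually prove: ``one verifies that turning the corner at each $v_j$ interchanges the two sheets exactly once'' is an assertion, not an argument. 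Worse, as stated it is at best convention-dependent and arguably backwards. There is no global notion of ``the two sheets'' over a punctured triangle (the cover is connected there), so you must fix a labelling of the two preimages along the collar; the natural one is the source/sink colouring of $\pi^{-1}(P)$ from the construction. With that labelling the corner turns at $v_j$ \emph{preserve} the label (the gluings respect the source/sink colouring, so the lift stays near the source, resp.\ sink, preimage of $v_j$ all the way through the fan), whereas each edge-parallel run \emph{swaps} it, because every edge of the hexagon $\mathsf{H}$ joins a source vertex to a sink vertex. The total number of swaps around $C_i$ is still $n_i$ (there are $n_i$ edges and $n_i$ corners), so your conclusion survives, but the mechanism you invoke is not the one that is true, and you give no verification of either version.

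The paper sidesteps this local bookkeeping entirely. It caps each boundary component $C_i$ with a triangulated punctured disk (adding an interior puncture in the degenerate cases $n_i=1,2$, which admit no ideal triangulation), so that the restriction of the cover over the capping disk $D_i$ is a double cover of a disk branched at the $n_i-2$ interior branch points of the triangulation of $D_i$. Over a disk the cover is determined by its branch locus, and $C_i$ is homologous in $D_i\setminus B$ to the sum of the small loops around those branch points; hence $\mu(C_i)\equiv n_i-2\equiv n_i\pmod 2$ with no dependence on gluing choices or on the combinatorics of $\mathcal{T}$ near $C_i$. If you want to keep your direct approach, the honest fix is essentially the same homological observation performed inside $\bar S$: a collar push-off of $C_i$ is freely homotopic in $\bar S\setminus B$ to a curve that can be compared with loops around branch points only after such a reduction, and the ``corner turn'' heuristic must be replaced by an explicit trivialization of the cover over the cut-open collar with transition data read off from the hexagon gluings. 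As it stands, your proof has a genuine gap at its central step.
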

	
	\begin{proof}
		First, note that the genus $g'$ of $\Sigma$ is an integer because $3k_o+\sum n_i$ is always even. It is clear from the construction that $\bar\Sigma$ is compact and orientable, and that $\Sigma$ has $2p$ internal punctures. To compute the number of boundary components of $\bar\Sigma$, we will glue to each boundary of $\bar S$ a disk with the corresponding number of puncture on the boundary to get a surface $\hat{S}$ with no boundary, only internal punctures. Since a disk with one (resp. two) puncture on the boundary does not admit an ideal triangulation, we glue a disk with one (resp. two) puncture on the boundary and one internal puncture instead.
		In the corresponding ramified covering $\hat{\Sigma}$ of $\hat{S}$, we then remove the lifts of the interior of the glued disks to obtain $\Sigma$. The result follows from the following lemma:
		\begin{lemma}
			If $S$ is a closed disk with $n\geq 3$ punctures on the boundary, $\bar\Sigma$ has either one boundary component with $2n$ punctures if $n$ is odd or two boundary components with $n$ punctures each if $n$ is even. If $S$ is a disk with one internal puncture and one puncture on the boundary, $\bar\Sigma$ has one boundary component with two punctures. If $S$ is a disk with one internal puncture and two punctures on the boundary, $\bar\Sigma$ has two boundary components with two punctures each.
		\end{lemma}
		\begin{proof}
		    The two cases with an internal puncture can be computed individually. Let $S$ be a disk with $n\geq 3$ punctures on the boundary.
		    Let $\mathcal{T}$ be a triangulation of $S$ and $\Sigma$ the corresponding ramified covering. Let $\gamma$ be a loop homotopic to the boundary of the disk going around all the $n-2$ branched points in $S$. Let $x$ be the base point of $\gamma$, and $x_1,x_2$ the lifts of $x$ to $\Sigma$. Let $\tilde\gamma$ the lift of $\gamma$ starting at $x_1$. If $\tilde\gamma$ is a loop then there are two lifts of the boundary of $\bar S$ to $\bar \Sigma$, and if $\tilde\gamma$ is a path from $x_1$ to $x_2$ then the lift of the boundary of $\bar S$ is connected in $\bar\Sigma$. The loop $\gamma$ is homotopic to the concatenations of loops $\gamma_1,\dots,\gamma_{\lfloor\frac{n-2}{2}\rfloor},\gamma'$ based at $x$ such that each $\gamma_i$ goes around two branched points in $S$ and $\gamma'$ is either trivial if $n-2$ is even or goes around one branched point if $n-2$ is odd. Then $\tilde\gamma$ is the concatenation of the lifts $\tilde\gamma_1,\dots,\tilde\gamma_{\lfloor\frac{n-2}{2}\rfloor},\tilde\gamma'$. Since the $\tilde\gamma_i$ are loops based at $p_1$ and $\tilde\gamma'$ is either trivial or a path from $p_1$ to $p_2$ (depending on the parity of $n$), we get the result.
		\end{proof}
		The Euler characteristic of $\bar S$ is $$\chi(\bar S) = 2-2g-k = 2-2g-k_o-k_e$$
		and the Euler characteristic of $\bar{\Sigma}$ is
		$$\chi(\bar\Sigma) = 2-2g'-k_o-2k_e.$$
		The number of branched points is the same as the number of triangles in $\mathcal{T}$, which is $-2\chi(\bar S)+2p+\sum n_i$. Riemann-Hurwitz formula gives us:
		\begin{align*}
			\chi(\bar\Sigma)=2-2g'-k_o-2k_e &= 2\chi(\bar S)-\left(-2\chi(\bar S)+2p+\sum_{i=1}^k n_i\right)\\
			&= 4\chi(\bar S)-2p-\sum_{i=1}^k n_i\\
			&= 8-8g-2p-4k_o-4k_e -\sum_{i=1}^k n_i
		\end{align*}
		We can then solve for $g'$ to get the result.
	\end{proof}

	\begin{remark}
		In particular, the topology of $\Sigma$ does not depend on the triangulation $\mathcal{T}$.
	\end{remark}
	
	We denote by $\theta\colon\Sigma\to\Sigma$ the covering involution. The following result is a direct consequence of the above proposition.
	
	\begin{cor}\label{counting}
		The fundamental group $\pi_1(\Sigma)$ is a free group of rank $$1-\chi(\bar\Sigma)+2p=1-4\chi(\bar S)+4p+\sum n_i.$$
		Let $b\in \Sigma$ be a ramification point of the covering $\pi\colon\Sigma\to S$. Let $\alpha_1,\dots,\alpha_s\colon [0,1]\to S$ be free generators of the fundamental group $\pi_1(S,\pi(b))$ that do not pass through other ramification points. The fundamental group $\pi_1(\Sigma,b)$ is the free group freely generated by the following collection of loops on $\Sigma$:
		\begin{enumerate}
			\item For every generator $\alpha_i$, there are two closed lifts $\gamma_i^1$ and $\gamma_i^2=\theta\circ\gamma_i^1$ on $\Sigma$ based at $b$ (in total $2-2\chi(\bar S)+2p$ curves);
			\item For every ramification point $b'\neq b$ in $\Sigma$, we fix a simple segment on $S$ connecting $\pi(b)$ and $\pi(b')$ and take the lift of this segment on $\Sigma$. It is a closed loop $\xi$ based at $b$ (in total $-2\chi(\bar S)+2p-1+\sum n_i$ curves).
		\end{enumerate}
		
		The fundamental group $\pi_1(T'\Sigma,\tilde b)$ where $\tilde b\in T'\Sigma$ is a lift of $b$ to $T'\Sigma$ is generated by lifts of curves described above and the curve going once around the fiber of $T'\Sigma\to\Sigma$ at $\tilde b$.
		
		%The fundamental group with signs $\pi_1^s(\Sigma)$ of $\Sigma$ is isomorphic to $\pi_1(\Sigma)\times \Z/2\Z$.
	\end{cor}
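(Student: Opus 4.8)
The plan is to extract everything from Proposition~\ref{prop:topology_Sigma} via standard algebraic topology of surfaces. First I would establish the rank formula: since $\Sigma$ is a connected orientable surface of genus $g'$ with some boundary components and $2p$ internal punctures, removing the punctures (and noting that a surface with nonempty boundary or punctures is homotopy equivalent to a wedge of circles) gives that $\pi_1(\Sigma)$ is free of rank $1-\chi(\Sigma)$. Here $\chi(\Sigma)=\chi(\bar\Sigma)-2p$ because deleting each internal puncture drops the Euler characteristic by $1$. Substituting $\chi(\bar\Sigma)=4\chi(\bar S)-2p-\sum n_i$ from the proof of Proposition~\ref{prop:topology_Sigma} yields rank $=1-\chi(\bar\Sigma)+2p=1-4\chi(\bar S)+4p+\sum n_i$, which is the claimed number.

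Next I would exhibit an explicit free generating set. Fix a ramification point $b\in\Sigma$ and let $\alpha_1,\dots,\alpha_s$ be free generators of $\pi_1(S,\pi(b))$, so $s=1-\chi(S)=1-\chi(\bar S)+p$ since $S$ has $p$ internal punctures (the external punctures do not contribute extra handles to $S$ itself, as $\bar S$ with boundary already retracts onto a graph — I would phrase this carefully in terms of $\chi(S)$). The covering $\pi\colon\Sigma\to S$ is a degree~$2$ branched cover branched exactly over the $B$-points; away from these it is an honest double cover. I would use the standard picture: a double cover branched over a finite set of points in $S$ is classified, on the complement of the branch locus, by a homomorphism $\pi_1(S\setminus B)\to\Z/2$ sending each small loop around a branch point to $1$. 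To get generators of $\pi_1(\Sigma,b)$ I would apply the Reidemeister--Schreier procedure to this index-$2$ subgroup, then re-interpret the Schreier generators geometrically: each $\alpha_i$ that maps trivially into $\Z/2$ lifts to two loops $\gamma_i^1,\gamma_i^2=\theta\circ\gamma_i^1$ at $b$; the loop around each branch point $b'\neq b$ contributes, after being pushed to the basepoint along a chosen segment, a single loop $\xi_{b'}$ (the double cover locally unwinds the small circle around $b'$ into one loop). Equivalently and more cleanly, I would observe that $\Sigma\setminus\pi^{-1}(B)\to S\setminus B$ is an unramified double cover and then fill back in the $\pi^{-1}(B)$ points: each re-added ramification point kills one relation, which accounts for the count $-2\chi(\bar S)+2p-1+\sum n_i$ of $\xi$-type curves once one checks $2s + (\#B - 1) = $ the rank above, using $\#B=-2\chi(\bar S)+2p+\sum n_i$ (the number of triangles, as recalled in the proof of the Proposition). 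So the main bookkeeping step is verifying $2(1-\chi(\bar S)+p) + (-2\chi(\bar S)+2p+\sum n_i - 1) = 1-4\chi(\bar S)+4p+\sum n_i$, which is immediate.

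Finally, for $\pi_1(T'\Sigma,\tilde b)$ I would use the fiber sequence $\SS^1\hookrightarrow T'\Sigma\to\Sigma$. Since $\Sigma$ is an open surface (noncompact, as it has punctures or boundary), the unit tangent bundle is trivial, $T'\Sigma\cong\Sigma\times\SS^1$, so $\pi_1(T'\Sigma)\cong\pi_1(\Sigma)\times\Z$; concretely it is generated by (any chosen lifts of) the generators listed above together with the loop $\delta$ going once around the $\SS^1$-fiber over $\tilde b$, and $\delta$ is central. I would remark that the splitting requires a choice of trivialization / section, which is why the statement only asserts a generating set rather than a canonical presentation. The one genuine subtlety to be careful about is the triviality of $T'\Sigma$: it follows because any oriented vector bundle over a space homotopy equivalent to a graph is trivial, and $\Sigma$ deformation retracts onto a wedge of circles. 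I expect this last point, together with the precise Reidemeister--Schreier translation into the geometric loops $\gamma_i^j$ and $\xi$, to be the part requiring the most care; the Euler-characteristic arithmetic is routine given the Proposition.
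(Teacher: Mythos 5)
Your argument is correct and follows the route the paper intends: the paper offers no written proof, simply calling the corollary a direct consequence of Proposition~\ref{prop:topology_Sigma}, and your derivation --- the Euler-characteristic bookkeeping from that proposition, Reidemeister--Schreier applied to the unramified double cover $\Sigma\setminus\pi^{-1}(B)\to S\setminus B$ followed by filling in the ramification points, and the triviality of $T'\Sigma$ over the open surface $\Sigma$ --- supplies exactly the standard details being elided. The only slip is terminological: filling in a ramification point kills one \emph{generator} (the peripheral loop), not one relation, but your count $2s+\#B-1$ and the verification that it equals $1-4\chi(\bar S)+4p+\sum n_i$ are right.
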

	
	\subsection{Spectral network}\label{sec:spectral_networks}
	Let $\mathcal{T}$ be an ideal triangulation of $S$ and $\pi : \Sigma\to S$ be the corresponding $2$-to-$1$ ramified covering. A (small) \defin{spectral network} associated with this ramified covering is a set $\mathcal{W}$ of paths $\left[ -1,1\right] \to \bar\Sigma$ (called \defin{rays}) satisfying:
	\begin{itemize}
		\item for all $\alpha\in\mathcal{W}$, $\alpha(-1),\alpha(1)\in \pi^{-1}(P)$, $\alpha(0)\in B$ and if $t\notin\left\lbrace -1,0,1\right\rbrace $, then $\alpha(t)\notin P\cup B$
		\item for all $\alpha\in\mathcal{W}$ and for all $t\in [-1,1]$, $\pi(\alpha(t)) = \pi(\alpha(-t))$
		\item for all $b\in B$, there are exactly 3 rays $\alpha_1,\alpha_2,\alpha_3\in\mathcal{W}$ passing through $b$, and locally around $b$ the rays look like this:
		\begin{figure}[h!]
			\includegraphics[scale=0.5]{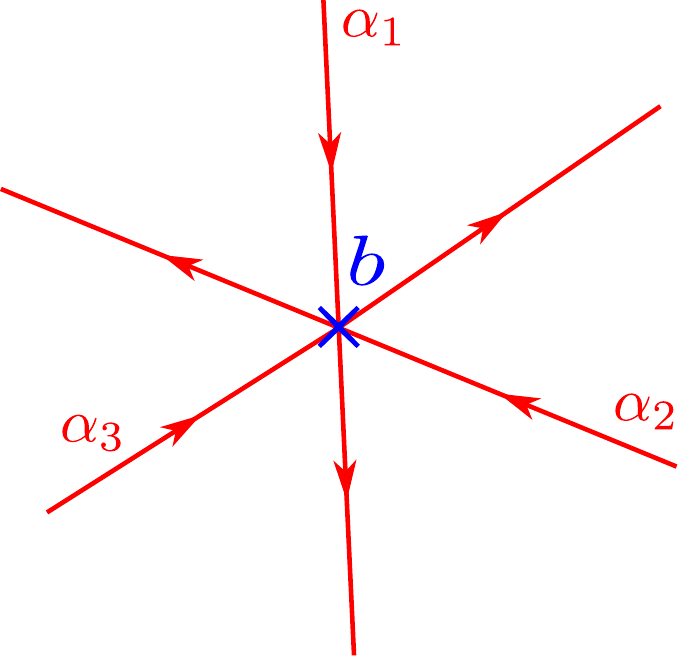}
		\end{figure}
		\item for all $\alpha \neq \alpha', \alpha(]-1,0[)$ -- which we call the \defin{past} of $\alpha$ -- does not intersect $\alpha'(]0,1[)$ -- which we call the \defin{future} of $\alpha'$.
	\end{itemize}
	
	\begin{remark}
		We can omit the last condition in rank 2 spectral networks (i.e. associated to a two-fold covering) as we will construct spectral networks without intersections in $\Sigma$.
	\end{remark}
	
	We can construct a spectral network associated with a triangulation $\mathcal{T}$ in the following way: call the points of $\pi^{-1}(P)$ that are on third roots of unity (for the Euclidean structure) \defin{sinks} and all other points of $\pi^{-1}(P)$ \defin{sources}. This way, every point of $P$ have two preimages, one source and one sink.
	For each hexagon $\mathsf{H}$ of $\pi^{-1}(\mathcal{T})$, the three rays going through the branch point in $\mathsf{H}$ are the three Euclidean segments going from the source to the sink for each of the three puncture in $\mathsf{T}$.
	
	\begin{figure}[h!]\label{SN-triangle}
		\includegraphics[scale=0.5]{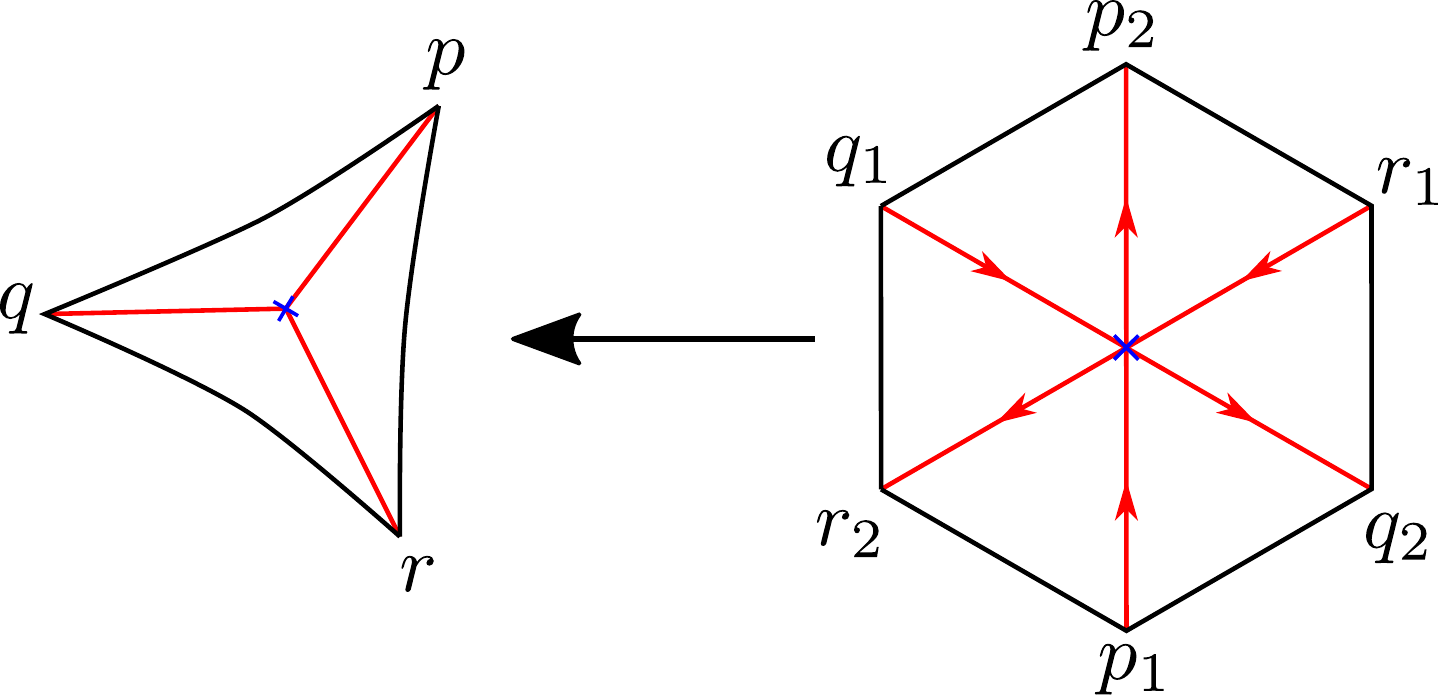}
		\caption{Picture of the spectral network on each triangle of the triangulation. Here the sources are $p_2,q_2,r_2$ and the sinks are $p_1,q_1,r_1$.}
	\end{figure}
	
	We fix a spectral network $\mathcal W$ on $\Sigma$ adapted to the covering $\Sigma\to S$ and to the ideal triangulation $\mathcal T$. The complement of all lines of $\mathcal W$ on $\Sigma$ is a collection of simply connected regions called \defin{cells}. These are either quadrilaterals bounded by four lines of $\mathcal W$ or triangles bounded by two lines of $\mathcal W$ and one boundary component of $S$. The closure of every cell in $\bar S$ contains precisely two punctures.
	
	Let now $p_0\colon S'\to S$ be the universal covering of $S$. There exist a branched two-fold covering $\pi'\colon\Sigma'\to S'$ and an (infinite but in general not universal) covering $p_1\colon \Sigma'\to \Sigma$ such that the following diagram commutes:
	$$\begin{tikzcd}
		\Sigma' \arrow{r}{\pi'} \arrow[swap]{d}{p_1} & S' \arrow{d}{p_0} \\%
		\Sigma \arrow{r}{\pi}& S
	\end{tikzcd}$$
	The ideal hexagonal tiling of $\Sigma$ lifts to an ideal hexagonal tiling of $\Sigma'$. We call the set of ends of edges of this tiling \defin{the ideal boundary of $\Sigma'$}. In fact, the ideal boundary of $\Sigma'$ does not depend on the choice of the tiling.
	
	The map $\pi'$ can be continuously extended to the ideal boundary of $\Sigma'$. Therefore, we can talk about images and preimages of punctures under $\pi'$.
	
	We lift the triangulation $\mathcal T$ on $S$, the corresponding hexagonal tiling on $\Sigma$ and the spectral network $\mathcal W$ to these coverings.
	
	Now we are working on the universal covering $S'$. For every puncture $p$ of $S'$, we consider the union of all cells that have this puncture in their (ideal) boundaries, take the closure of this union in $S'$ and then take the interior of this set. This is an open contractible set of $S'$, we denote it by $U_p$ and call the \defin{standard neighborhood} of the puncture $p\in S'$. Let $p_1$ and $p_2$ be two lifts of the puncture $p$ of $S'$ under $\pi'$. The lift of $U_p$ to $\Sigma'$ consists of two connected components $U_{p_1}$ and $U_{p_2}$ every of which projects homeomorphically to $U_p$, i.e. $U_p$ is evenly covered by $U_{p_1}$ and $U_{p_2}$. We will call $U_{p_1}$ (resp. $U_{p_2}$) the \defin{standard neighborhood} of $p_1$ (resp. $p_2$).
	
	Notice that two standard neighborhoods either do not intersect or intersect in a cell of the (lifted) spectral network $\mathcal W$. More precisely, two standard neighborhoods $U_{p}$ and $U_{q}$ intersect if and only if the punctures $p$ and $q$ of $S'$ (resp. of $\Sigma'$) are connected by an edge of $p_0^{-1}(\mathcal T)$ in $S'$ (resp. by an edge of $p_1^{-1}(\T^*)$ in $\Sigma'$).
	
	\begin{comment}
		We consider a dual graph $\Gamma(\mathcal T)$ to the triangulation $\mathcal T$. More precisely, for every external edge of $\mathcal T$ we choose a point in the interior of this edge, then vertices of $\Gamma$ are precisely these chosen points and points in $B$. We connect to two vertices by an edge in the following cases:
		\begin{itemize}
			\item both vertices are in $B$ and the corresponding two triangles share an edge of $\mathcal T$;
			\item one vertex lies in the triangle, the second lies on an external edge that belongs to the boundary of this triangle.
		\end{itemize}
		
		The graph $\Gamma(\mathcal T)$ provides another tiling of $S$. Every connected component of this tiling is called a \defin{cell} of $S$. Every cell is either a polygon on $S$ with precisely one puncture in the interior or a simply connected region bounded by edges of $\Gamma(\mathcal T)$ and boundary of $S$ which bounds precisely one external puncture. The lift to $\Sigma$ of $\Gamma(\mathcal T)$ provides the dual graph to $\pi^{-1}(\mathcal T)$ with the same properties. This graph provides a tiling of $\Sigma$. Every connected component of this tiling is called a \defin{cell} of $\Sigma$ and is in 1:1 correspondence to a puncture of $\Sigma$.
	\end{comment}
	
	\subsection{Peripheral decoration}
	
	To consider framed and decorated local systems, we will need the following additional data on the surface $S$:  for every internal puncture $p\in P$ we fix a neighborhood $S_p\subset S$ of $p\in P$ that is diffeomorphic to a punctured disk. For every external puncture $p\in P$, we choose a neighborhood $S_p\subset S$ of $p$ that is diffeomorphic to a punctured half-disk. We also assume that all $S_p$ are so small that they are pairwise disjoint, and their union does not contain points of $B$. In this case, every $S_p$ is evenly covered by $\Sigma_{p_1}$ and $\Sigma_{p_2}$ where $\{p_1,p_2\}=\pi^{-1}(p)$ and $\Sigma_{p_1}$ and $\Sigma_{p_2}$ are the two connected components of $\pi^{-1}(S_p)$.
	
	Further, for every internal puncture $p\in P$ we fix a simple smooth loop $\beta_p\colon [0,1]\to S_p$ around $p$ such that $\dot\beta_p(0)=\dot\beta_p(1)$, oriented such that $p$ is on the right of $\beta_p$ according to the orientation of $S$. For every external puncture $p\in P$, we chose a simple smooth path $\beta_p\colon ([0,1],\{0,1\})\to (S_p, S_p\cap\partial S)$ connecting two boundary connected components separated by $p$, once again with orientation given by the one on $S$.

	In both cases, up to isotopy there is only one such $\beta_p$. Since all $\beta_p$ are smooth, we can lift them to the $T'S$ namely to the curve $[\beta_p(t),\dot\beta_p(t)]\in T'S$, $t\in[0,1]$. We denote this lift by $T'\beta_p\colon [0,1]\to T'S$. Notice that for every internal puncture $p$, the lift $T'\beta_p$ is always a loop.
	
	If for every $p\in P$ a curve $\beta_p$ as above is chosen, then we say that the surface $S$ is \defin{decorated}, and the collection $\mathcal D=\{\beta_p\mid p\in P\}$ is called a \defin{decoration} of $S$. 
	
	If a hyperbolic structure as above on $S$ is chosen, then every $\beta_p$ can be represented by projections of small enough horocycles around some $p'\in p_0^{-1}(p)$ under the universal covering map $p_0\colon S'\to S$.
	
	Let $\mathcal{T}$ be an ideal triangulation of $S$. We can assume the arcs of the triangulation are smooth, and if the surface is decorated, we will further assume that any arc of the triangulation intersects only once the peripheral curves associated to its endpoints and do so with matching derivative, i.e. an arc $\gamma\in\mathcal{T}$ from $p\in P$ to $q\in P$ satisfies $\dot\gamma(t_0)=\dot\beta_p(t_0')$ and $\dot\gamma(t_1)=\dot\beta_q(t_1')$ where $t_0,t_1,t_0',t_1'$ are such that $\gamma(t_0) = \beta_p(t_0')$ and $\gamma(t_1)=\beta_q(t_1')$. This is the same as assuming that every arc from $p$ to $q$ of the lift $\mathcal{T}'$ of $\mathcal{T}$ to $T'S$ intersects the lifts $T'\beta_p$ and $T'\beta_q$. This can be done by bending the arcs of $\mathcal{T}$ in a neighborhood of their intersections with the peripheral curves. Let $I_\mathcal{T}(S)\subset T'S$ be the set of intersection points between $\mathcal{T}'$ and the lifted decoration curves $T'\beta_p$. This means that now each edge of the triangulation $\mathcal{T}$ is endowed with two special points (one for each extremity) lying on the peripheral curves associated to its endpoints.
	For every edge $\gamma\in\mathcal{T}$ of the triangulation, let $\tau_\gamma$ be the path in $T'S$ with extremities in $I_\mathcal{T}$ obtained by restricting $\gamma$ to the part in between the two special points on it. Note that since this is applied to all the oriented arcs of the triangulation, the chosen representative for $\tau_\gamma$ and $\tau_{\bar\gamma}$ are such that $T'(\tau_\gamma.\tau_{\bar\gamma})$ is homotopic to a lace that loop once around the fiber $T'S\to S$ (see Figure~\ref{fig:bending}).
	\begin{figure}[h!]\label{fig:bending}
		\includegraphics[scale = 0.3]{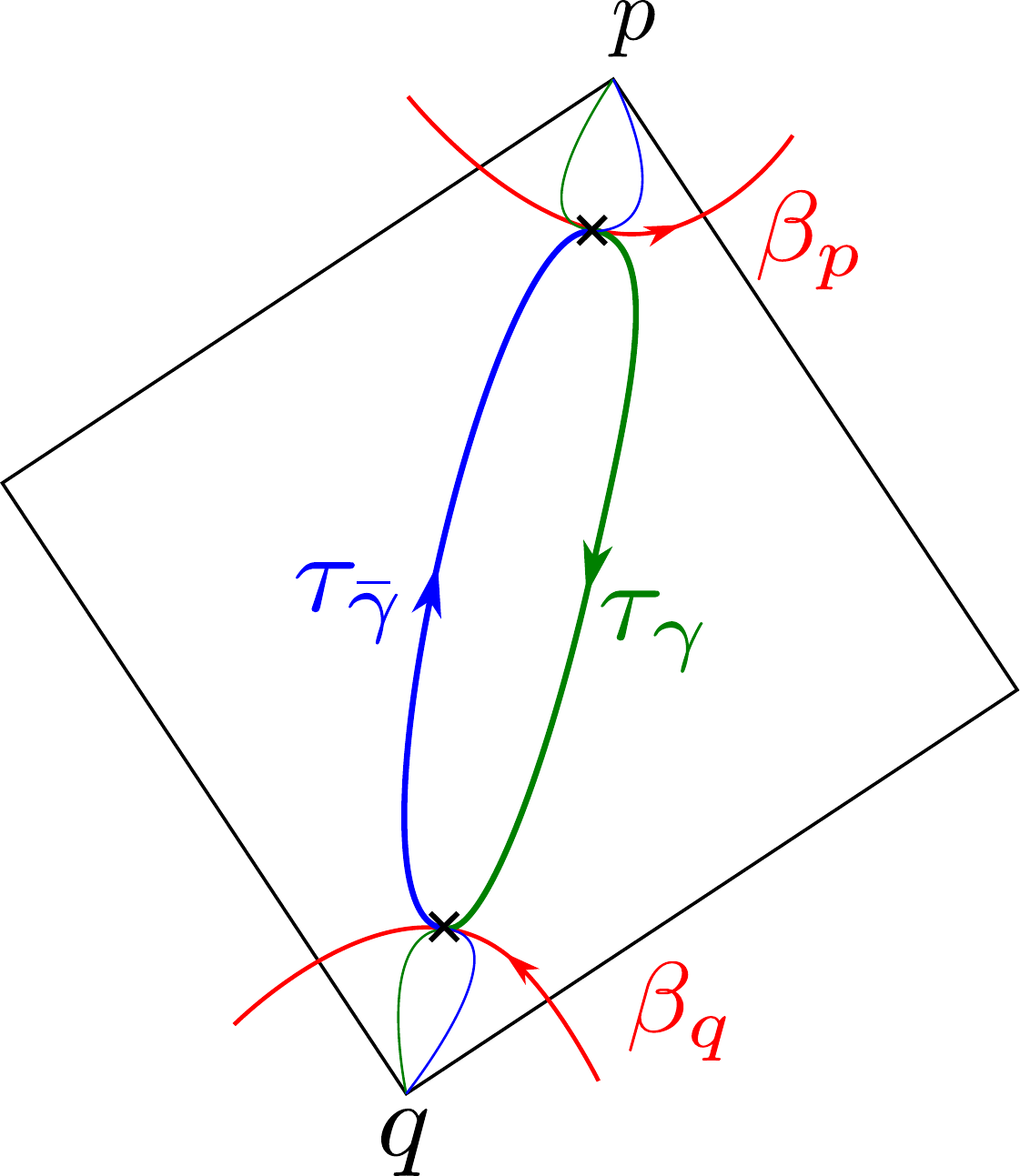}
		\caption{The bending of an edge of the triangulation. In red are the peripheral decoration, in blue and green are the oriented edges of the triangulation, the crosses are the points in $I_\mathcal{T}$ and the thicker part of the edges in between the peripheral curves are the paths $\tau_\gamma$ and $\tau_{\bar\gamma}$}
	\end{figure}
	We also apply the same construction in $\Sigma$ to equip each edge of the hexagonal tiling $\mathcal{T}^*$  with two special points, and denote $I_\mathcal{T^*}(\Sigma)$ the set of all special points in $T'\Sigma$. Note that both $I_\mathcal{T^*}(\Sigma)$ and $I_\mathcal{T}(S)$ are finite sets, and that $\pi : T'\Sigma\to T'S$ is 2:1 from $I_\mathcal{T^*}(\Sigma)$ to $I_\mathcal{T}(S)$.
	
	Since the points in $I_{\mathcal{T}^*}(\Sigma)$ lie on peripheral curves associated with punctures, they inherit the source/sink naming from the puncture.
	
	\subsection{The spectral network map}\label{sec:tw_ab}
	
	Lifting paths to a ramified covering is not homotopy invariant: a contractible loop around a branch point $b\in B$ is lifted as two paths on $\Sigma$ that are not loops, thus not homotopic to the lift of the trivial loop. The goal of this section is to construct a path-lifting map $SN$, which depends on the spectral network $\mathcal{W}$, from paths on $T'S$ to paths on $T'\Sigma$ such that $SN$ is well-defined on homotopy classes. 
	
	We will use the symbol $\approx$ to represent homotopy (with fixed extremities) of paths.
	
	Let $S$ be a punctured surface, $\mathcal{T}$ an ideal triangulation of $S$ and $\pi : \Sigma\to S$ the two-fold branched covering constructed previously. Let $\mathcal{W}$ be the spectral network adapted to this covering constructed before.
	Every path $\alpha : ]-1,1[\to \Sigma$ of $\mathcal{W}$ is smooth since it is a straight line for the Euclidean structure. We can thus lift the paths of $\mathcal{W}$ to $T'\alpha :\begin{array}{rcl}
		]-1,1[&\to &T'\Sigma \\
		t&\mapsto &(\alpha(t),\dot\alpha(t))
	\end{array}$. We will also call this set of paths in $T'\Sigma$ a spectral network and denote it $T'\mathcal{W}$. %Is T'W used at all ?
	
	Let $H$ be a hexagonal tile of $\Sigma$. Note that the Euclidean structure on $\Sigma$ allows us to identify $T'H$ with $H\times \SS^1$. In the following, a path $\gamma$ on $T'H\simeq H\times \SS^1$ will be written as a couple $(x,v)$ where $x$ is the projection of $\gamma$ on $H\subset\Sigma$ and $v$ is the projection of $\gamma$ on $\SS^1$. Note that $\SS^1$ has a natural orientation given by the one on $\Sigma$. For all $\theta\in\SS^1$, define $s^+_\theta$ to be the (homotopy class of the) path in $\SS^1$ going from $\theta$ to $-\theta$ following the orientation of $\SS^1$, and $s^-_\theta$ going from $\theta$ to $-\theta$ in the opposite direction.
	For a path $v$ on $\SS^1$, we will denote $-v$ the image of $v$ under the involution $\theta\mapsto -\theta$. The path $-v$ goes from $-v(0)$ to $-v(1)$. In particular, we have $s^-_\theta = -\overline{s^+_\theta}$ and $(-s^\pm_\theta).s^\pm_\theta = \delta^\pm_\theta$ where $\delta^\pm_\theta : t\mapsto \theta \pm 2i\pi t$. When the context is clear, we will omit the subscript describing the starting point of the paths $s^\pm$ and $\delta^\pm$. The paths $\delta^\pm$ satisfy $\delta^- = \overline{\delta^+}$ and if $v$ is a path on $\SS^1$ from $\theta_1$ to $\theta_2$, we have $\delta^\pm_{\theta_2}.v \approx v.\delta^\pm_{\theta_1}$.
	
	The Euclidean structure on $\Sigma$ also define a flat connection $\nabla$ on $T\Sigma$ given by the restriction of the standard flat connection on $\RR^2$.
	Since it is a bilinear map on the sections of $T\Sigma$ (denoted $\Gamma(T\Sigma)$), this connection induces a flat connection (which we also call $\nabla$) on the unit tangent bundle
	\[\nabla : \Gamma(T'\Sigma)\times \Gamma(T'\Sigma)\to \Gamma(T'\Sigma).\]

	\begin{definition}
		Let $X$ be a topological space. The path algebra of $X$ (denoted $\Z[\Path(X)]$) is the free $\Z$-algebra generated by homotopy classes of paths $[0,1]\to X$, with the product given by concatenation of paths:
		if $\gamma_1(0)\neq \gamma_2(1)$ then $\gamma_1.\gamma_2 = 0$ and if $\gamma_1(0)= \gamma_2(1)$ then $\gamma_1.\gamma_2$ is the path obtained by going through $\gamma_2$ then $\gamma_1$.
	\end{definition}
	
	Now let $X$ be a smooth surface. Define the \defin{twisted path algebra} of $X$ as
	\[ \TPA(X)  = \Z[\Path(T'X)]/\mathcal{I}\]
	where $\mathcal{I}$ is the two-sided ideal generated by the elements $e_{x,\theta}+\delta_{x,\theta}$ for $(x,\theta)\in T'X$, with
	\[e_{x,\theta} : \begin{array}{rcl}
		[0,1]&\to & T'X \\
		t&\mapsto& (x,\theta)
	\end{array} \text{ and } \delta_{x,\theta} : \begin{array}{rcl}
		[0,1]&\to&T'X\\
		t&\mapsto& (x,\theta+2\pi t)
	\end{array}.\]
	
	\begin{remark}\label{rk:TPA_subalg}
		Given any non-empty subset $E\subset T'X$, the subset \[\left\lbrace \gamma_1+\dots+\gamma_r+ \mathcal{I}~|~\text{for all } 1\leq i\leq r,\text{ endpoints of } \gamma_i \text{ are in }E \right\rbrace\subset \TPA(X)\] is a subring of $\TPA(X)$ because composition of paths preserves the set of endpoints. We will denote $\TPA_E(X)$ this subring.
	\end{remark}
	
	Let $x$ be a path on $S$ intersecting only once (and not at its endpoints) the spectral network $\mathcal{W}$ and not going through a branch point. Let $\alpha\in\mathcal{W}$ be the path such that $\pi(\alpha)$ intersects $x$. The two standard lifts $x_1$ and $x_2$ of $x$ to $\Sigma$ each intersect once $\alpha$, one of them intersecting the past of $\alpha$ and the other intersecting the future of $\alpha$. Suppose $x_1$ is the one intersecting the past of $\alpha$. We can then define a new path $x'$ on $\Sigma$ as the concatenation of 5 paths $x'_1,\dots,x'_5$ defined as follows:
	\begin{itemize}
		\item $x'_1$ is the part of $x_1$ from its starting point to the intersection point with $\alpha$
		\item $x'_2$ is the part of $\alpha$ from the intersection with $x_1$ to the branch point
		\item $x'_3$ is a constant path at the branch point (it will be useful in the next paragraph when we will consider the lifted spectral network $T'\mathcal{W}$)
		\item $x'_4$ is the part of $\alpha$ from the branch point to the intersection with $x_2$
		\item $x'_5$ is the part of $x_2$ from the intersection with $\alpha$ to its endpoint.
	\end{itemize}
	
	\begin{figure}[h!]
		\includegraphics[scale = 0.5]{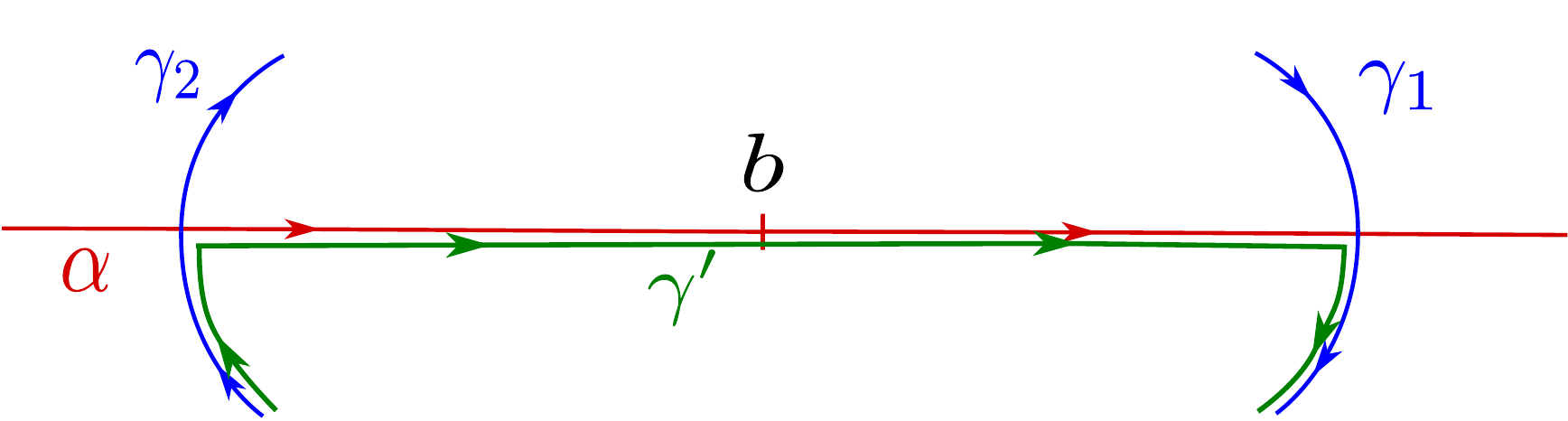}
		\caption{The path $x'$ added by the intersection with $\alpha$}
	\end{figure}

	Now let $\gamma : t\mapsto (x(t),v(t))$ be a path on $T'S$ such that the path $x$ on $S$ intersects only once the spectral network on a ray $\alpha\in\mathcal{W}$ at a time $t_0\in~ ]0,1[$. Let $\gamma_1 = (x_1,v_1)$ and $\gamma_2 = (x_2,v_2)$ be the standard lifts of $\gamma$ to $T'\Sigma$, with the same choice of numbering as above. Note that $\gamma_1$ and $\gamma_2$ do not intersect $T'\mathcal{W}$ in general, but $x_1$ and $x_2$ intersect $\alpha\in\mathcal{W}$. Let $x'$ be the path on $\Sigma$ obtained with the construction above. We now want a continuous map $v' : [0,1]\to \SS^1$ which coincide with the standard lifts $v_1$ and $v_2$ when $x'$ coincides with either $x_1$ or $x_2$. Without loss of generality, suppose $x$ is smooth at the intersection point with $\mathcal{W}$ and that the intersection is transverse. Then $x_1$ and $x_2$ are also smooth at their intersection points with $\alpha$. We say the intersection of $x_1$ with $\alpha$ is \defin{positively oriented} if $(\dot{x_1}(t_0),\dot{\alpha}(t_0))$ agrees with the orientation on $\Sigma$, \defin{negatively oriented} if not.
	\begin{remark}
		The positivity of the intersection of a path $(x,v)$ in $T'\Sigma$ with a ray of the spectral network is determined using the derivative of the underlying path $x$, and does not depend on the vector field $v$ on $x$.
	\end{remark}
	Let $v'$ be the concatenation of 5 paths $v'_1,\dots,v'_5$ defined as follows:
	\begin{itemize}
		\item $v'_1$ is the part of $v_1$ from its starting point to the intersection point with $\alpha$
		\item $v'_5$ is the part of $v_2$ from the intersection with $\alpha$ to its endpoint
		\item $v'_2$ is obtained by parallel transport with respect to the flat connection $\nabla$ on $\Sigma$ from the vector $v_1(t_0)$ along the path $x'_2$
		\item $v'_4$  is obtained by parallel transport with respect to $\nabla$ from the vector $v_2(t_0) = -v_1(t_0)$ along the path $\bar{x}'_4$
		\item $v'_3$ is the path $s^+_{v'_2(0)}$ in $T'_b\Sigma\simeq \SS^1$ if the intersection of $x_1$ with $\alpha$ is positively oriented, and $s^-_{v'_2(0)}$ if the intersection is negatively oriented.
	\end{itemize}

	\begin{figure}[h!]
		\includegraphics[scale = 0.5]{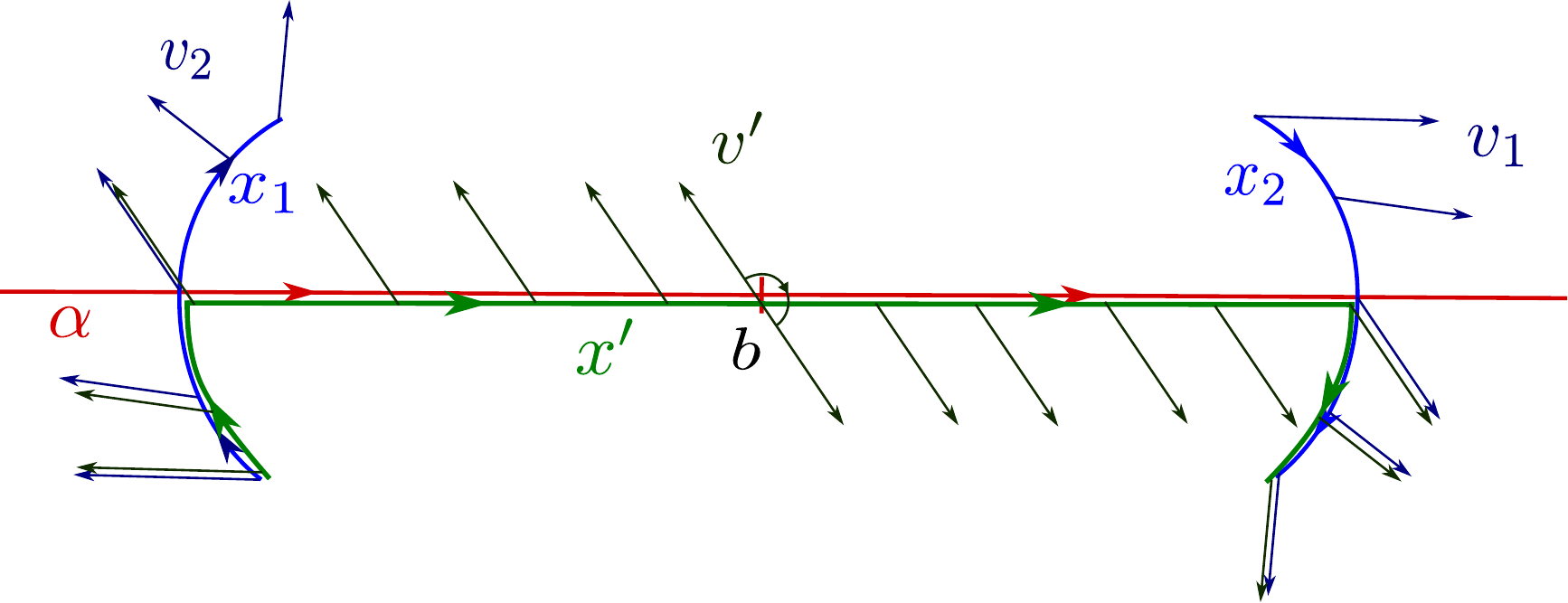}
		\caption{The path $(x',v')$ added by the intersection with $\alpha$. The intersection of $x_1$ with $\alpha$ is positively oriented if $\Sigma$ is oriented clockwise.}
	\end{figure}
	
	\begin{remark}\label{rk:homotopy_of_added_path}
		The resulting path $v'$ on $\SS^1$ is homotopic to $(-v^2_1).s^\pm_{v_1(t_0)}.v^1_1$ where $v_1^1 = v_1|_{[0,t_0]}$ and $v_1^2 = v_1|_{[t_0,1]}$. Note that for all path $w$ on $\SS^1$ from $\theta_0$ to $\theta_1$, we have
		\[ s_{\theta_1}^\pm.w \approx (-w).s_{\theta_0}^\pm \]
		so the path $v'$ is homotopic to $s^\pm_{v_1(1)}.v_1$.
	\end{remark}
	
	Let $\gamma' = (x',v')$ and $SN(\gamma)$ be the element $\gamma_1 + \gamma_2 + \gamma'\in \TPA(\Sigma)$. Let $\gamma$ be a path in $T' S$. We can write $\gamma$ as a concatenation of smaller paths $\gamma_1,\dots,\gamma_r$, each intersecting at most once the spectral network and for each of these small paths, apply the construction above to obtain $SN(\gamma_1),\dots,SN(\gamma_r)$ (if $\gamma_i$ does not intersect the spectral network, define $SN(\gamma_i)$ to be the sum of the two standard lifts of $\gamma_i$). Define the lift of $\gamma$ with respect to the spectral network $\mathcal{W}$ to be the product $SN(\gamma) = SN(\gamma_1)\dots SN(\gamma_r)\in \TPA(\Sigma)$.
	
	\begin{theorem}
		Let $\gamma_1$ and $\gamma_2$ be two homotopic paths in $T' S$. Then $SN(\gamma_1) = SN(\gamma_2)$.
		In particular, the map
		\[ SN : \begin{array}{rcl}
			\TPA(S)& \to &\TPA(\Sigma) \\
			\gamma&\mapsto & SN(\gamma)
		\end{array} \]
		is well-defined.
	\end{theorem}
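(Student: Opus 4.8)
The plan is to show that $SN$ is invariant under each of the elementary homotopy moves that generate the equivalence relation of homotopy with fixed endpoints, using the fact that any two homotopic paths in $T'S$ differ by a finite sequence of such moves which can moreover be taken disjoint from the branch locus $B$ and from the vertices of the spectral network cells. Concretely, it suffices to check: (i) the definition of $SN(\gamma)$ does not depend on the chosen subdivision $\gamma \approx \gamma_1 \dots \gamma_r$ into pieces each meeting $\mathcal{W}$ at most once; (ii) $SN$ is unchanged when a piece of path is pushed across a cell of $\mathcal{W}$ without crossing any ray; and (iii) $SN$ is unchanged under the two local moves that do involve $\mathcal{W}$, namely sliding a strand of the path back and forth across a single ray (creating/cancelling a bigon with that ray), and sliding a strand across the triple point configuration at a branch point $b \in B$.

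First I would treat (i): if a piece $\gamma_i$ meeting $\mathcal W$ once is further subdivided at a point not on $\mathcal W$ into $\gamma_i' \gamma_i''$ with, say, $\gamma_i''$ carrying the intersection, then one checks directly from the five-piece construction that $SN(\gamma_i') \cdot SN(\gamma_i'') = (\gamma_{i,1}' + \gamma_{i,2}')(\gamma_{i,1}'' + \gamma_{i,2}'' + (\gamma_i'')')$ equals $\gamma_{i,1} + \gamma_{i,2} + \gamma_i'$ modulo $\mathcal I$: the cross terms $\gamma_{i,1}' \gamma_{i,2}''$ and $\gamma_{i,2}' \gamma_{i,1}''$ vanish because the standard lifts do not have matching endpoints over a generic point, while $\gamma_{i,1}'\cdot \gamma_{i,2}'' = 0$ and the surviving terms reassemble $\gamma_i'$. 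This, together with the analogous check when the subdivision point lies on $\mathcal W$ but the intersection is the same one, makes $SN$ well-defined on a fixed path and multiplicative, so it descends to a map of the free algebra $\Z[\Path(T'S)]$; the relations $e_{x,\theta} + \delta_{x,\theta}$ defining $\TPA(S)$ are sent to the corresponding relations defining $\TPA(\Sigma)$ because the two standard lifts of $\delta_{x,\theta}$ are $\delta_{x_1,\theta}$ and $\delta_{x_2,-\theta} = \overline{\delta_{x_1,-\theta}}$ (using $d\pi(x,v) = d\pi(x,-v)$), both of which equal $-e$ in $\TPA(\Sigma)$; no ray is crossed so $SN$ acts by the two standard lifts only. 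Hence $SN$ is a well-defined algebra homomorphism $\TPA(S) \to \TPA(\Sigma)$ on each path, and what remains is homotopy invariance.

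For (ii), a homotopy supported in a region meeting no ray of $\mathcal W$ lifts to two honest homotopies of the two standard lifts (the covering $\pi$ is a local diffeomorphism there, hence $T'\pi$ is too), so $\gamma_1 + \gamma_2$ is unchanged and no correction term $\gamma'$ appears; the flat connection $\nabla$ plays no role. For (iii), the bigon move is the key local computation: when a strand is isotoped so as to cross a ray $\alpha$ twice with opposite orientations and then pulled off, the two new correction paths $\gamma'$, $\gamma''$ that are produced must cancel against each other and against the change in $\gamma_1 + \gamma_2$. Here one uses Remark~\ref{rk:homotopy_of_added_path}, which says the added $\SS^1$-factor is homotopic to $s^\pm_{v_1(1)}.v_1$ with the sign $\pm$ given by the orientation of the crossing; the two crossings of the bigon have opposite orientations, so the two corrections carry $s^+$ and $s^-$ respectively, and since $(-s^\pm).s^\pm = \delta^\pm$ and $\delta^+ = \overline{\delta^-}$, the product of the two corrections is a loop around the fiber, i.e. equal to $-$ (a standard lift) in $\TPA(\Sigma)$, which is exactly what is needed to restore the missing standard lift after the strand has been pulled to the other side of $\alpha$. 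The branch-point move is handled similarly: pushing a strand across the triple point exchanges which two of the three rays it crosses, and one verifies using the local picture of the three rays at $b$ (all passing through the single point $b$, related by the deck involution $\theta$) together with the constant piece $x'_3$ at $b$ that the resulting element of $\TPA(\Sigma)$ is unchanged — this is where the precise axioms on $\mathcal W$ near $b$ and the choice of $s^+$ versus $s^-$ according to orientation are used. I expect the branch-point move to be the main obstacle, since it is the one genuinely new phenomenon relative to ordinary covering-space path lifting and forces the whole $T'$-framework (the twisting, the parallel transport of the $v$-component, the $\delta^\pm$ bookkeeping) to be deployed coherently; the bigon move is conceptually the same computation but with fewer rays. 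Once all elementary moves are verified, an arbitrary homotopy between $\gamma_1$ and $\gamma_2$ is decomposed into finitely many such moves, giving $SN(\gamma_1) = SN(\gamma_2)$.
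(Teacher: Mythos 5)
Your proposal is correct and follows essentially the same route as the paper: after disposing of subdivision independence and homotopies disjoint from $\mathcal{W}$, everything reduces to the two local computations that the paper isolates as its two lemmas, namely the cancellation of the two correction terms for a bigon with a single ray (via Remark~\ref{rk:homotopy_of_added_path} and the identities $(-s^{\pm}).s^{\pm}=\delta^{\pm}$) and the triviality of $SN$ on a small contractible loop around a branch point, which is just your ``slide across the triple point'' move phrased as a loop. The only cosmetic difference is that you make the multiplicativity/subdivision check explicit where the paper leaves it implicit.
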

	
	\begin{remark}
		The map $SN$ is not defined on the whole twisted path algebra of $S$ as paths with endpoints on a ray of the spectral network can not be lifted consistently, but we will never need to lift such paths. The subset of $\TPA(S)$ (resp. $\TPA(\Sigma)$) of elements where no term has an endpoint on $\mathcal{W}$ is a subring (see rem. \ref{rk:TPA_subalg}), and with a slight abuse of notation we will still denote it $\TPA(S)$ (resp. $\TPA(\Sigma)$).
	\end{remark}
	
	The theorem is a consequence of the two following lemmas:
	
	\begin{lemma}
		Let $\gamma = (x,v)$ be a path in $T' S$ that intersects exactly twice the same ray $\alpha$ of the spectral network and no other ray of $\mathcal{W}$, as in figure \ref{fig:SN5}. Then $SN(\gamma)=\gamma_1 + \gamma_2$ where $\gamma_1$ and $\gamma_2$ are the two standard lifts of $\gamma$.
	\end{lemma}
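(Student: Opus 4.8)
The plan is to cut $\gamma$ into two pieces, each meeting the spectral network once, apply the single‑crossing construction to each, and then recognise the two ``detour'' contributions as differing by exactly one turn around the fibre of $T'\Sigma\to\Sigma$, so that they cancel in $\TPA(\Sigma)$.

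First I would let $t_1<t_2$ be the two times at which the underlying path $x$ meets $\pi(\alpha)$, pick $s\in{}]t_1,t_2[$, and write $\gamma=\eta_2.\eta_1$ with $\eta_1=\gamma|_{[0,s]}$ and $\eta_2=\gamma|_{[s,1]}$; then $\eta_1$ meets $\mathcal W$ only at $t_1$ and $\eta_2$ only at $t_2$, so by definition $SN(\gamma)=SN(\eta_2)\,SN(\eta_1)$ with $SN(\eta_1)=a_1+a_2+a'$ and $SN(\eta_2)=b_1+b_2+b'$, where $a_1,b_1$ are the standard lifts meeting the past of $\alpha$, $a_2,b_2$ those meeting the future, and $a',b'$ the detour terms. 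Expanding, a product $c.d$ is nonzero only if the endpoint of $d$ equals the startpoint of $c$. Here I would read off the configuration of Figure~\ref{fig:SN5}: because $x$ crosses $\pi(\alpha)$ and then crosses back, the standard lift of $x$ meets the preimage arc $\alpha$ on its past half at both $t_1$ and $t_2$, so the two global standard lifts of $\gamma$ are $b_1.a_1$ and $b_2.a_2$, which are precisely $\gamma_1$ and $\gamma_2$; and since $a'$ ends where $a_2$ ends, $b'$ starts where $b_1$ starts, and the covering involution $\theta$ sends $\alpha(t)$ to $\alpha(-t)$ (exchanging past and future and fixing the branch point $b$), matching endpoints for the remaining seven products shows that the only two other nonzero terms are $b_2.a'$ and $b'.a_1$; in particular $SN(\gamma)=\gamma_1+\gamma_2+b_2.a'+b'.a_1$.

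It then remains to prove $b_2.a'+b'.a_1=0$ in $\TPA(\Sigma)$. Both terms are paths in $T'\Sigma$ from the startpoint of $a_1$ to the endpoint of $b_2$. Using Remark~\ref{rk:homotopy_of_added_path} to slide each detour's fibre half‑turn onto the branch point $b$, and unwinding the two constructions with $\theta\circ\alpha(t)=\alpha(-t)$, $\theta(b)=b$ and $x_2=\theta\circ x_1$, one checks that $b_2.a'$ and $b'.a_1$ have homotopic underlying paths in $\Sigma$ and agree as paths in $T'\Sigma$ except that, at $b$, one carries the half‑turn $s^+_{\theta_0}$ and the other carries $s^-_{\theta_0}$ — the two signs being opposite precisely because the two intersections of $x$ with $\pi(\alpha)$ have opposite orientation (their algebraic intersection number is zero). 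The relations among the $s^\pm$ and $\delta^\pm$ collected above give $s^+_{\theta_0}\approx\delta^+_{-\theta_0}.s^-_{\theta_0}$, so the two paths differ by inserting one full loop $\delta^+$ around the fibre of $T'\Sigma\to\Sigma$; and in $\TPA(\Sigma)$ a full fibre loop acts as $-1$, since $e_{x,\theta}+\delta_{x,\theta}\in\mathcal I$ and concatenating with the constant path $e_{x,\theta}$ does nothing. Hence $b_2.a'=-\,b'.a_1$, and $SN(\gamma)=\gamma_1+\gamma_2$ as claimed.

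I expect the main obstacle to be the bookkeeping in the last paragraph: showing that, after the homotopies of Remark~\ref{rk:homotopy_of_added_path}, the two surviving detour paths genuinely coincide up to the sign of the half‑turn inserted at $b$. This needs a careful reading of Figure~\ref{fig:SN5}, of how the two sheets of $\pi$ over a neighbourhood of $\pi(\alpha)$ are glued through the branch point, and of the orientations of the two crossings; the sliding identities $s^\pm_{\theta_1}.w\approx(-w).s^\pm_{\theta_0}$ and $\delta^\pm_{\theta_2}.v\approx v.\delta^\pm_{\theta_1}$ are exactly what lets one move the two half‑turns into coincident position. Once this geometric identification is in place, the algebra in $\TPA(\Sigma)$ — reducing a full fibre loop to a sign via the ideal $\mathcal I$ — is immediate.
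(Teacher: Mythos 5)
Your argument is correct and follows the same route as the paper's proof: after splitting $\gamma$ at a point between the two crossings, the product in $\TPA(\Sigma)$ leaves exactly the two standard lifts plus the two detour terms $b_2.a'$ and $b'.a_1$ (the paper's $\gamma'$ and $\gamma''$), and these cancel because the two crossings have opposite orientations, so the inserted half-turns are $s^+$ and $s^-$ and the two detours differ by a full fibre loop $\delta^+$, which is $-1$ in $\TPA(\Sigma)$. The bookkeeping you flag as the remaining obstacle is handled in the paper exactly as you propose, via Remark~\ref{rk:homotopy_of_added_path} and the sliding identities, yielding $\bar v''.v'\approx \bar v_1.\overline{s^-}.s^+.v_1\approx\delta^+$.
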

	
	\begin{figure}[h!]
		\includegraphics[scale = 0.6]{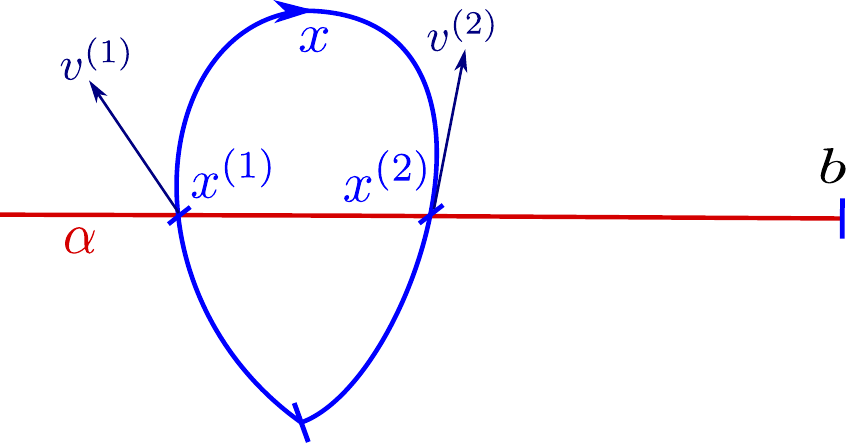}
		\caption{A loop intersecting twice the same ray of $\mathcal{W}$}
		\label{fig:SN5}
	\end{figure}
	
	\begin{proof}
		Let  $t_1<t_2$ be the two elements of the interval $\left[ 0,1\right] $ such that $x(t_1)$ and $x(t_2)$ are on $\alpha$. Let $(x^{(1)},v^{(1)}) = \gamma(t_1)$ and $(x^{(2)},v^{(2)})=\gamma(t_2)$, and let $\gamma_1=(x_1,v_1)$ and $\gamma_2=(x_2,v_2)$ the two standard lifts of $\gamma$, $\gamma_1$ being the lift intersecting $\alpha$ before the branch point. Then $SN(\gamma) = \gamma_1+\gamma_2+\gamma'+\gamma''$ where $\gamma' = (x',v')$ is such that $x'$ follow $\alpha$ from $x_1^{(1)}$ to $x_2^{(1)}$ and $\gamma''=(x'',v'')$ is such that $x''$ follows $\alpha$ from $x_1^{(2)}$ to $x_2^{(2)}$.
		
		\begin{figure}[h!]
			\includegraphics[scale = 0.6]{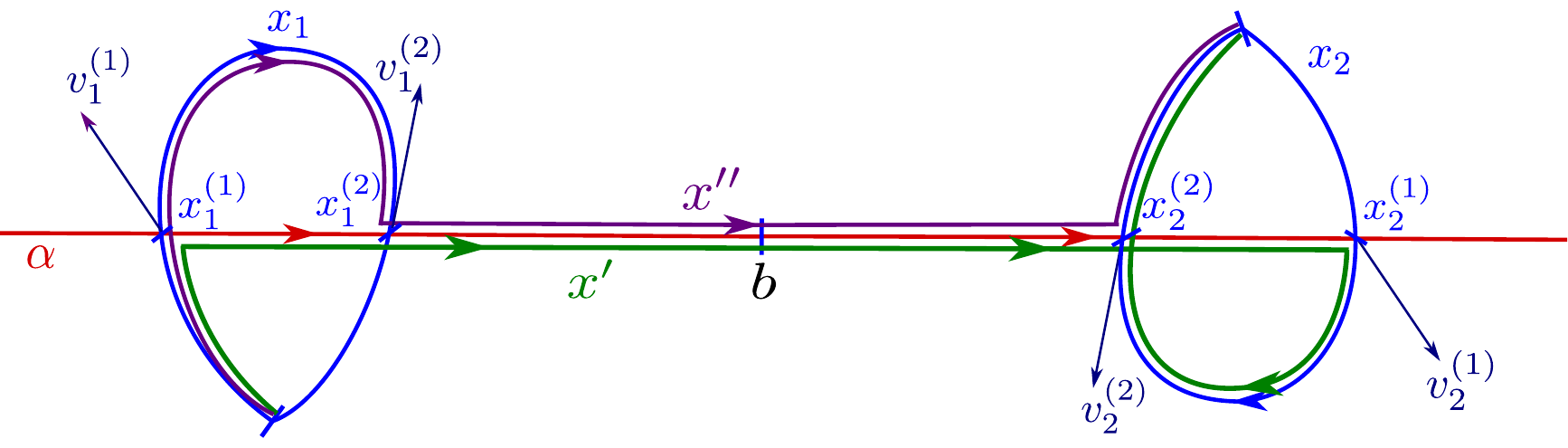}
			\caption{Spectral network lift of $\gamma$}
		\end{figure}
		
		In order to prove the lemma, we need to show that the two paths $\gamma'$ and $\gamma''$ added by the intersections with the spectral network cancel each other in $\TPA(\Sigma)$, i.e. that $\delta' +\delta'' =0$. For this, we need to show that $\bar\gamma''.\gamma'$ is homotopic to an odd power of $\delta_{x_1(0),v_1(0)}$. The paths $x'$ and $x''$ are homotopic on $\Sigma$ so the concatenation $\bar x''.x'$ is trivial. What is left is to show that $\bar v''.v'$ is homotopic to an odd power of $\delta^+$.
		
		Suppose the intersection of $x_1$ with $\alpha$ at $x_1^{(1)}$ is positive, the other case being symmetric. Then the intersection of $x_1$ with $\alpha$ at $x_1^{(2)}$ is negative. Then by remark \ref{rk:homotopy_of_added_path}, $v'\approx s^+.v_1$ and $v''\approx s^-.v_1$, so we have \begin{align*}
			\bar v''.v' & \approx\bar{v}_1.\overline{s^-}.s^+_{}.v_1 \\
			&\approx \bar{v}_1.\delta^+.v_1 \\
			&\approx \delta^+.\qedhere
		\end{align*}
	\end{proof}
	
	\begin{lemma}
		Let $m$ be a point in $S$ in a small neighborhood of a branch point $b$ but not on a ray of $\mathcal{W}$ and $\theta\in T'_m S$. Let $\gamma$ be a path homotopic to $e_{m,\theta}$ in $T' S$ that loops around a branch point $b$ in $S$, intersecting exactly once each of the three rays of $\mathcal{W}$ going out of $b$, as in figure \ref{fig:SN4}. Then $SN(\gamma)=e_{m_1,\theta_1} + e_{m_2,\theta_2}$ where $(m_1,\theta_1)$ and $(m_2,\theta_2)$ are the two lifts of $(m,\theta)$ in $T' \Sigma$.
	\end{lemma}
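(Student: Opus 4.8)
The plan is to compute $SN(\gamma)$ directly, in the spirit of the previous lemma. First I would decompose $\gamma$ in $T'S$ as a concatenation $\gamma = \gamma^{(3)}.\gamma^{(2)}.\gamma^{(1)}$ of three sub-arcs, each crossing exactly one of the three rays $\alpha_1,\alpha_2,\alpha_3$ of $\mathcal{W}$ through $b$, so that $SN(\gamma) = SN(\gamma^{(3)})SN(\gamma^{(2)})SN(\gamma^{(1)})$ with $SN(\gamma^{(i)}) = \gamma^{(i)}_1 + \gamma^{(i)}_2 + (\gamma^{(i)})'$. Before expanding, I would fix a local model near $b$: in a small disk $D\subset\Sigma$ around $b$ the three rays are Euclidean diameters cutting $D$ into six sectors, each $\alpha_j$ runs from its source end to its sink end through $b$, and (with the parametrization convention $\alpha_j(-1)=$ source) the past of $\alpha_j$ is its source side. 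Using this together with the positions of the two preimages $m_1,m_2$ of $m$, I determine, for each $i$, which standard lift of $\gamma^{(i)}$ meets the past and which meets the future of the relevant ray, and hence the precise endpoints of the correction path $(\gamma^{(i)})'$.

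Expanding the product yields $27$ terms, but the endpoint-matching rule defining multiplication in $\TPA(\Sigma)$ annihilates all but eight of them. Among these eight, two are the honest standard lifts $\gamma_1\colon m_1\to m_2$ and $\gamma_2\colon m_2\to m_1$ of the whole loop (these are \emph{arcs between} the two preimages, not loops, since lifting a loop around a branch point exchanges the two sheets); the other six involve one, two or three correction paths. Sorting the eight terms by their pair of endpoints gives two $m_1\to m_2$ terms, two $m_2\to m_1$ terms, three $m_1\to m_1$ terms and one $m_2\to m_2$ term.

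The computation then splits into two parts. First, the two $m_1\to m_2$ terms cancel each other in $\TPA(\Sigma)$, and likewise the two $m_2\to m_1$ terms: in such a pair the two underlying paths both lie in the contractible disk $D$ and share their endpoints, hence are homotopic rel endpoints, while on the $\SS^1$-factor the term carrying corrections differs from $\gamma_1$ (resp. $\gamma_2$) only by the $s^\pm$-insertions of its two correction paths. A short orientation computation — using that $\gamma$ winds once (say counterclockwise) around $b$, so that the three relevant ``past'' crossings all have the same sign, and that parallel transport along the ray segments is trivial because $b$ is a regular point of the flat structure — shows both insertions are of the same type, say $s^+$, and by $(-s^+_\theta).s^+_\theta = \delta^+_\theta$ they amount to one extra loop $\delta^+$ around the fibre; since $e_{x,\theta}+\delta_{x,\theta}=0$ in $\TPA(\Sigma)$, this term equals $-\gamma_1$ (resp. $-\gamma_2$), so the pair cancels. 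Second, for the remaining four terms each underlying path is a null-homotopic loop in $D$, so each term is an integer power of $\delta^\pm$ times $e_{m_1,\theta_1}$ (resp. $e_{m_2,\theta_2}$); tracking the $s^\pm$-insertions (recall $(\gamma^{(i)})'$ has $\SS^1$-part homotopic to $s^\pm_{v(1)}.v$ by Remark~\ref{rk:homotopy_of_added_path}) and simplifying with $s^\pm_{\theta_1}.w\approx (-w).s^\pm_{\theta_0}$, $\delta^\pm_{\theta_2}.v\approx v.\delta^\pm_{\theta_1}$ and $\delta^-=\overline{\delta^+}$ shows that the three $m_1\to m_1$ terms sum to $e_{m_1,\theta_1}$ and the single $m_2\to m_2$ term equals $e_{m_2,\theta_2}$. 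Together with the first part this gives $SN(\gamma)=e_{m_1,\theta_1}+e_{m_2,\theta_2}$.

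I expect the main obstacle to be purely bookkeeping: matching each standard lift to the past or future of its ray and computing the orientations of all the intersections, since a single sign slip would break one of the cancellations. The conceptual content is minimal — everything takes place inside a flat disk around $b$, where the only nontrivial invariant is the winding in the $\SS^1$-fibre — but keeping the $s^\pm$/$\delta^\pm$-calculus consistent across the three simultaneous corrections requires fixing once and for all the orientation of $\Sigma$ and the parametrizations of the rays.
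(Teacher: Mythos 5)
Your proposal is correct and follows essentially the same route as the paper: expand $SN(\gamma)$ over the three ray crossings, keep the eight endpoint-compatible terms (the paper lists them as $\gamma_1,\gamma_2,\gamma'_1,\dots,\gamma'_6$, sorted exactly as your $2+2+3+1$ count), cancel the two $m_1\to m_2$ and two $m_2\to m_1$ terms pairwise via the $\delta^+$ relation, and combine the remaining $m_1\to m_1$ terms (two of which cancel, one surviving) and the single $m_2\to m_2$ term into $e_{m_1,\theta_1}+e_{m_2,\theta_2}$. The bookkeeping you flag as the main obstacle is precisely what the paper's proof carries out explicitly, with the same orientation convention (all crossings positive when $\gamma$ winds in the direction of the orientation).
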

	
	\begin{figure}[h!]
		\includegraphics[scale = 0.9]{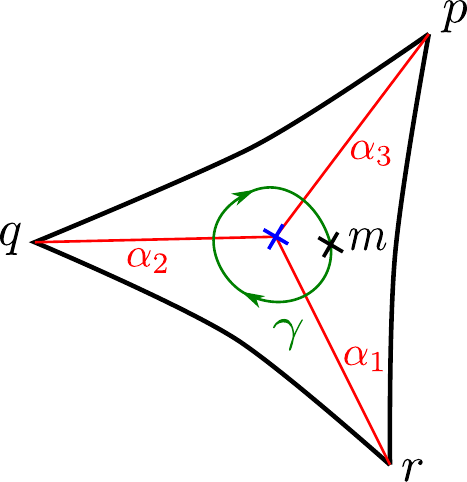}
		\caption{A small loop around a branch point.}
		\label{fig:SN4}
	\end{figure}
	
	\begin{proof}
		Suppose the path $\gamma$ is looping around $b$ in the direction given by the orientation of $\Sigma$, the other case being symmetric. Then all the intersections of the standard lifts of $\gamma$ with the spectral network in $\Sigma$ are positive.
		By applying the spectral network lifting rule to $\gamma$, we get 8 paths: the two standard lifts $\gamma_1 = (x_1,v_1)$ and $\gamma_2=(x_2,v_2)$, and 6 additional paths $\gamma'_1,\dots,\gamma'_6$ shown in figure \ref{fig:SN6}.
		\begin{figure}[h!]
			\includegraphics[scale=0.6]{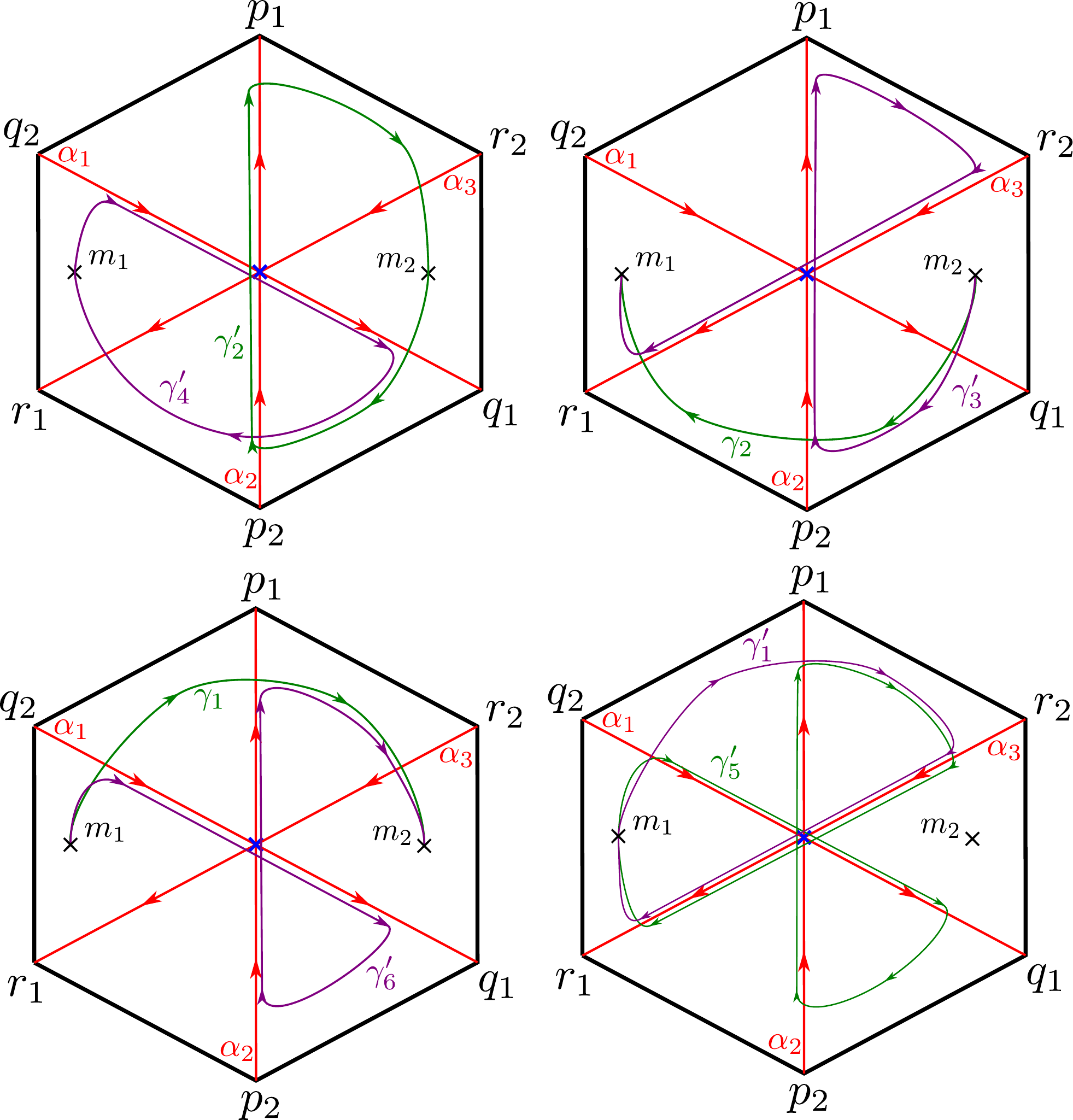}
			\caption{All 6 paths added by intersections with the spectral network, together with the standard lifts. On the upper left picture are the paths homotopic to trivial paths, and on the other are the remaining lifts, grouped as pairs of paths cancelling each other in $\TPA(\Sigma)$. Only the paths $x'_i$ on $\Sigma$ are drawn.}\label{fig:SN6}
		\end{figure}
		Let $\alpha_1,\alpha_2$ and $\alpha_3$ be the three ray of $\mathcal{W}$ intersected by $\gamma$, in that order. Let $\gamma_1$ be the standard lift of $\gamma$ intersecting $\alpha_1$ before the branch point, and let $(m_1,\theta_1)$ be its starting point and $(m_2,\theta_2)$ be its endpoint. We will label the spectral network lifts $\gamma'_i = (x'_i,v'_i)$ of $\gamma$ as follows:
		\begin{itemize}
			\item $\gamma'_1$ follows $\gamma_1$ until the intersection with $\alpha_3$, then $\alpha_3$, then $\gamma_2$ until its end
			\item $\gamma'_2$ follows $\gamma_2$ until the intersection with $\alpha_2$, then $\alpha_2$, then $\gamma_1$ until its end
			\item $\gamma'_3$ follows $\gamma_2$ until the intersection with $\alpha_2$, then $\alpha_2$, then $\gamma_1$ until the intersection with $\alpha_3$, then $\alpha_3$, then $\gamma_2$ until its end
			\item $\gamma'_4$ follows $\gamma_1$ until the intersection with $\alpha_1$, then $\alpha_1$, then $\gamma_2$ until its end
			\item $\gamma'_5$ follows $\gamma_1$ until the intersection with $\alpha_1$, then $\alpha_1$, then $\gamma_2$ until the intersection with $\alpha_2$, then $\alpha_2$, then $\gamma_1$ until the intersection with $\alpha_3$, then $\alpha_3$, then $\gamma_1$ until its end
			\item $\gamma'_6$ follows $\gamma_1$ until the intersection with $\alpha_1$, then $\alpha_1$, then $\gamma_2$ until the intersection with $\alpha_2$, then $\alpha_2$, then $\gamma_1$ until its end.
		\end{itemize}
		The paths $x'_1, x'_4$ and $x'_5$ are homotopic to the trivial path $e_{m_1}$, $x'_6$ is homotopic to $x_1$, $x'_2$ is homotopic to $e_{m_2}$ and $x'_3$ is homotopic to $x_2$. Since $\gamma$ is homotopic to $e_{m,\theta}$ and $x$ is looping around $b$ in the direction given by the orientation of $\Sigma$, we have $v_1 \approx s^-_{\theta_1}$ and $v_2 \approx s^-_{\theta_2}$. Using the same reasoning as above, we get the following:
		\begin{align*}
			v'_1 \approx s^+.v_1\approx e_{\theta_1} \\
			v'_2 \approx s^+.v_2\approx e_{\theta_2}\\
			v'_3 \approx s^+.s^+.v_2\approx \delta^+.v_2\\
			v'_4 \approx s^+.v_1\approx e_{\theta_1}\\
			v'_5 \approx s^+.s^+.s^+.v_1\approx \delta^+_{\theta_1}\\
			v'_3 \approx s^+.s^+.v_1\approx \delta^+.v_1
		\end{align*}
		So in $\TPA(\Sigma)$, we have:\begin{align*}
			\gamma_2 +\gamma'_3 = 0\\
			\gamma_1 + \gamma'_6 = 0\\
			\gamma'_1 +\gamma'_5 =0\\
			\gamma'_4 = e_{m_1,\theta_1}\\
			\gamma'_2 = e_{m_2,\theta_2}
		\end{align*}
		so
		\begin{proofeqn}
            SN(\gamma) = \gamma_1+\gamma_2+\gamma'_1+\gamma'_2+\gamma'_3+\gamma'_4+\gamma'_5+\gamma'_6= e_{m_2,\theta_2} + e_{m_2,\theta_2}.
        \end{proofeqn}
	\end{proof}
	
	\section{Partial abelianization of framed twisted local systems}\label{sec:ab_framed}
	
	In this section, we define framed and decorated twisted $\GL_2(A)$-local system over a punctured surface $S$ and describe a partial non-abelianization procedure for them. Using this, we describe the topology of the moduli space of framed and decorated twisted $\GL_2(A)$-local system that are transverse with respect to a fixed ideal triangulation of $S$.

	In this section, $A$ is a unital ring with a topology, and ring homomorphism are required to preserve unity elements. %TODO Hypothesis on A ?
	
	Let $A^n$ be as the set of columns ($n\times 1$ matrices) endowed with the structure of a right $A$-module.

    \begin{df} 
    We make the following definitions:
        \begin{enumerate}
        \item An $n$-tuple $(x_1,\dots,x_n)$ for $x_1,\dots,x_n\in A^n$ is called \defin{basis} of $A^n$ if the map
        $$\begin{matrix}
        A^n & \to & A^n \\
        (a_1,\dots,a_n) & \mapsto & \sum_{i=1}^nx_ia_i
        \end{matrix}$$
        is an isomorphism of $A$-modules.
        %for every $z\in A^n$ there exist $a_1,\dots,a_n\in A$ such that $z=\sum_{i=1}^nx_ia_i$.
        \item The element $x\in A^n$ is called \defin{regular} if there exist $x_2,\dots,x_n\in A^n$ such that $(x,x_2,\dots,x_n)$ is a basis of $A^n$.
        \item $\ell\subseteq A^n$ is called an \defin{$A$-line} if $\ell=xA$ for a regular $x\in A^n$. We denote the space of $A$-lines of $A^n$ by $\PP(A^n)$.
        \item Regular elements $x_1,\dots,x_k\in A^n$ for $k\leq n$ are called \defin{linearly independent} if there exist $x_{k+1},\dots ,x_n\in A^n$ such that $(x_1,\dots,x_n)$ is a basis of $A^n$.
        \item Two $A$-lines $\ell,m$ are called \defin{transverse} if $\ell=xA$, $m=yA$ for linearly independent $x,y\in A^n$.
        \end{enumerate}
    \end{df}
	
	Let $M_n(A)$ be the ring of all $n\times n$-matrices with entries in $A$, and $\GL_n(A)$ be the group of all invertible matrices of $M_n(A)$. Then $\GL_n(A)$ acts on $A^n$ by the left multiplication. 
	
	\begin{definition}
		A \defin{$\GL_n(A)$-local system} over a smooth manifold $X$ is a $A^n$-bundle over $X$ equipped with a flat connection.
	\end{definition}
	
	\begin{df}
	    Let $U$ be an open subset of $X$. A \defin{regular $A$-subbundle} $L$ of a $\GL_n(A)$-local system $\mathcal L$ over $U$ is a subbundle of $\mathcal L$ such that for every $p\in U$ there exists a neighborhood $U_p$ containing $p$ and a local trivialization
	    $$\Phi_p\colon \mathcal L|_{U_p}\to U_p\times A^n$$
	    such that $\Phi_p(L|_{U_p})=U_p\times\ell$ where $\ell$ is an $A$-line in $A^n$. 
	    
	    A section $v\colon U\to \mathcal L$ is \defin{regular} if $vA$ is a regular $A$-subbundle of $\mathcal L|_U$. 
	\end{df}
	
	\subsection{Twisted local systems}
	
	In this section, $X$ denotes either $S$ or $\Sigma$.
	
	\begin{definition}
		A \defin{twisted $\GL_n(A)$-local system} on $X$ is a flat $A^n$-bundle over $T'X$ with monodromies around the fibers of the natural projection $T'X\to X$ equal to $-\Id$.
	\end{definition}
	
	Let $x_0\in X$ and $v\in T'_{x_0}X$. The natural projection $T'X\to X$ has fiber homeomorphic to $\SS^1$, and we have the short exact sequence
	$$ 1\to \pi_1(T_{x_0}'X,(x_0,v)) \to \pi_1(T'X,(x_0,v)) \to \pi_1(X,x_0) \to 1$$
	with $\pi_1(T_{x_0}'X,(x_0,v))$ being isomorphic to $\ZZ$, generated by the loop $\delta_{x_0,v}^+$ going around the fiber over $x_0$ once in the direction given by the orientation of $X$. This extension is central because $X$ is oriented.
	
	Since $X$ is not closed, the group $\pi_1(X)$ is free, so the sequence above splits. The choice of a splitting corresponds to the choice of a non-vanishing vector field on $X$. Let $\pi^s_1(X)$ denote the quotient of $\pi_1(T'X,(x_0,v))$ by the normal subgroup $2\ZZ\subset\ZZ\simeq \pi_1(T_{x_0}'X,(x_0,v))$, so we have the short exact sequence
	$$ 1\to \ZZ/2\ZZ \to \pi^s_1(X) \to \pi_1(X,x_0) \to 1$$
	that once again splits. Note that this second sequence also splits when $X$ is closed (for instance for $X=\bar\Sigma$ and $\bar{\Sigma}$ has no boundary) since a closed surface of negative Euler characteristic always admits a vector field with zeroes of even indices only.
	
	\begin{proposition}\label{prop:tw_hilb_corr}
		The set of twisted $\GL_n(A)$-local systems on $X$ up to isomorphism is in 1:1 correspondence with the set of representations $\rho: \pi^s_1(X)\to \GL_n(A)$ such that $\rho(\delta_{x_0,v}^+)=-\Id$, up to the action of $\GL_n(A)$ by conjugation.
	\end{proposition}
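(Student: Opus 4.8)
The plan is to reduce the statement to the classical monodromy equivalence (Riemann--Hilbert correspondence), applied not to $X$ itself but to the circle bundle $T'X$, and then to bookkeep the central extension $1\to\Z/2\Z\to\pi^s_1(X)\to\pi_1(X,x_0)\to 1$ recorded above.

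First I would note that $T'X$ is connected, being an $\SS^1$-bundle over the connected manifold $X$, so that its fundamental group based at $(x_0,v)$ is well behaved. A twisted $\GL_n(A)$-local system is, by definition, a flat $A^n$-bundle over $T'X$; parallel transport along loops yields its monodromy representation $\rho\colon\pi_1(T'X,(x_0,v))\to\GL_n(A)$, canonical up to conjugation by $\GL_n(A)$ once one forgets the identification of the fiber over $(x_0,v)$ with $A^n$. Conversely every representation $\pi_1(T'X,(x_0,v))\to\GL_n(A)$ is the monodromy of a flat $A^n$-bundle, obtained from the universal cover of $T'X$ by the usual associated-bundle construction. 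These two operations are mutually inverse, and an isomorphism of flat bundles over $T'X$ corresponds exactly to a conjugation of monodromy representations; in checking this one uses connectedness of $T'X$ to compare different base points. The only thing worth a remark is that the classical argument, usually phrased for vector bundles, uses nothing about the fiber beyond its automorphism group, so it applies verbatim with fiber $A^n$ and structure group $\GL_n(A)=\Aut(A^n)$ even when $A$ is noncommutative.

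Second I would translate the twist condition. By definition the monodromy of a twisted local system around a fiber of $T'X\to X$ is $-\Id$, i.e. $\rho(\delta_{x_0,v}^+)=-\Id$, where $\delta_{x_0,v}^+$ generates the subgroup $\pi_1(T'_{x_0}X,(x_0,v))\cong\Z$ of the exact sequence above. This subgroup is central (the extension is central because $X$ is oriented), and $(-\Id)^2=\Id$; hence such a $\rho$ annihilates $2\Z\subset\Z\cong\pi_1(T'_{x_0}X)$ and therefore factors uniquely through $\pi^s_1(X)=\pi_1(T'X)/2\Z$, giving $\bar\rho\colon\pi^s_1(X)\to\GL_n(A)$ with $\bar\rho(\delta_{x_0,v}^+)=-\Id$ (writing again $\delta_{x_0,v}^+$ for its image in $\pi^s_1(X)$). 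Conversely, pulling back a representation $\bar\rho\colon\pi^s_1(X)\to\GL_n(A)$ with $\bar\rho(\delta_{x_0,v}^+)=-\Id$ along the quotient map $\pi_1(T'X)\to\pi^s_1(X)$ produces a representation $\rho$ of $\pi_1(T'X)$ with $\rho(\delta_{x_0,v}^+)=-\Id$, hence a twisted local system. These passages are mutually inverse and commute both with conjugation by $\GL_n(A)$ and with change-of-base-point isomorphisms, so composing with the equivalence of the previous paragraph yields the asserted bijection between isomorphism classes of twisted $\GL_n(A)$-local systems on $X$ and conjugacy classes of representations $\rho\colon\pi^s_1(X)\to\GL_n(A)$ with $\rho(\delta_{x_0,v}^+)=-\Id$.

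I do not expect a genuine obstacle: the argument is the standard monodromy correspondence together with elementary bookkeeping of the central subgroup $2\Z$. The only points deserving explicit care are the formal remark that Riemann--Hilbert goes through for $A^n$-bundles over the topological ring $A$ (it does, since only $\GL_n(A)$ intervenes), and the verification that $2\Z$ is a normal --- indeed central --- subgroup of $\pi_1(T'X)$, so that the quotient $\pi^s_1(X)$ is a group and every $\rho$ with $\rho(\delta_{x_0,v}^+)=-\Id$ descends to it.
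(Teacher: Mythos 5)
Your argument is correct and is exactly the standard one the paper has in mind: the paper states this proposition without proof, treating it as the monodromy correspondence for flat $A^n$-bundles over $T'X$ combined with the observation that $\rho(\delta_{x_0,v}^+)=-\Id$ forces $\rho$ to kill the central subgroup $2\Z\subset\pi_1(T'_{x_0}X)$ and hence factor through $\pi_1^s(X)$. Your two points of care — that the monodromy equivalence uses nothing beyond the structure group $\GL_n(A)=\Aut(A^n)$, and that $2\Z$ is central so the quotient and the factorization are legitimate — are precisely the right ones.
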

	
	If $\mathcal{L}$ is a twisted local system on $X$ and $\gamma$ is a path on $T'X$, the flat connection defines a holonomy map $m_\gamma$ from $\mathcal{L}_{\gamma(0)}$ to $\mathcal{L}_{\gamma(1)}$. Moreover, the path $\delta_{x,\theta}$ induces the linear map $-\Id$ on $\mathcal{L}_{x,\theta}$ by definition of a twisted local system. Thus, if $\gamma = \gamma_1+\dots+\gamma_r + \mathcal{I}\in \TPA(X)$ where all the $\gamma_i\in \Path(T'X)$ have the same extremities, the holonomy map $m_\gamma = m_{\gamma_1}+\dots+m_{\gamma_r}: \mathcal{L}_{\gamma(0)}\to \mathcal{L}_{\gamma(1)}$ is well-defined
	(if there is more than one term in $\gamma$ the holonomy map $m_\gamma$ may not be an isomorphism).
	However, if $\gamma_1$ and $\gamma_2$ do not have the same extremities, it is not possible to associate an element of $M_n(A)$ to $\gamma_1+\gamma_2$, which is a problem we need to solve in order to consider representations of $\TPA(X)$. To make a link between twisted local systems and representations of $\TPA(S)$, we first need to modify the ring $M_n(A)$ to solve this issue of endpoints. Since multiplication in $\TPA(X)$ is zero for paths whose extremities do not match, we need a ring with the same behavior.
	
	\begin{definition}
		Let $A$ be a unital ring and $E\subset X$ any non-empty subset. Let $A_E$ be the ring $A^{(E\times E)}$ of finite formal sums of elements of the form $ a_{(p,q)},a\in A, p,q\in E$, endowed with the multiplication defined as follows:
		\begin{itemize}
			\item $\forall a,b\in A,\forall x,y,z\in E, a_{(x,y)}.b_{(y,z)} = (a.b)_{(x,z)}$
			\item $\forall a,b\in A,\forall x,y,z,t\in E,y\neq z, a_{(x,y)}.b_{(z,t)} = 0$
		\end{itemize}
	\end{definition}
	
	The elements of this ring are copies of elements of $A$ indexed by pairs of points in $E$, thought as ``endpoints'' of these elements. The sum of two elements is a formal sum except when the indices match, it then agrees with the sum in $A$. The multiplication of two elements is made, so it agrees with the composition of paths: multiplication of two elements with ``non-composable'' indices is zero and multiplication with ``composable'' indices agrees with the one on $A$, and the index of the result is the composition of the indices.
	
	The ring $A_E$ contains many isomorphic copies of $A$ as subrings: for all $x\in E$,
	\[ A_x := \left\lbrace a_{(x,x)}~|~a\in A\right\rbrace  \]
	is a subring of $A_E$ isomorphic to $A$. Note however that the ring $A_E$ is not unital if $E$ is infinite, but contains many idempotent elements.
	
	Let $\TPA_\mathcal{T}(S) = \TPA_{I_{\mathcal{T}}(S)}(S)$ be the subring of $\TPA(S)$ of paths with endpoints in $I_{\mathcal{T}}(S)$ described in Remark~\ref{rk:TPA_subalg}. Similarly, let $\TPA_\mathcal{T^*}(\Sigma) = \TPA_{I_{\mathcal{T^*}}(\Sigma)}(\Sigma)$. For any unital ring, let $A_\mathcal{T} = A_{I_{\mathcal{T}}(S)}$ and $A_\mathcal{T^*} = A_{I_{\mathcal{T^*}}(\Sigma)}$. In the following, $\TPA_\mathcal{T^{(*)}}(X)$ denote either $\TPA_\mathcal{T}(S)$ or $\TPA_\mathcal{T^*}(\Sigma)$ and similarly $A_\mathcal{T^{(*)}}$ denote either $A_\mathcal{T}$ or $A_\mathcal{T^*}$. Since $I_{\mathcal{T}}(S)$ and $I_{\mathcal{T^*}}(\Sigma)$ are finite, $\TPA_\mathcal{T^{(*)}}(X)$ and $A_\mathcal{T^{(*)}}$ are unital, the units elements being respectively $\sum_{x\in I_{\mathcal{T^{(*)}}}(X)} e_x$ and $\sum_{x\in I_{\mathcal{T^{(*)}}}(X)} 1_{(x,x)}$. There is then a diagonal embedding
	$$\begin{array}{rcl}
	A^{\# I_{\mathcal{T^{(*)}}}(X)}&\to& A_\mathcal{T^{(*)}}\\
	(a_x)_{x\in I_{\mathcal{T^{(*)}}}(X)}&\mapsto&\sum_{x\in I_{\mathcal{T^{(*)}}}(X)} (a_x)_{(x,x)}
	\end{array}.$$
	For every $x\in I_{\mathcal{T^{(*)}}}(X)$, there is an injective group homomorphism
	\[ \pi^s_1(X,x) \to \TPA_x(X)^{\times}\subset \TPA_\mathcal{T^{(*)}}(X). \]
	Two elements $a,b\in A_\mathcal{T^{(*)}}$ are said \defin{conjugated} if there exists an invertible element $u$ in $A^{\# I_{\mathcal{T^{(*)}}}}$ such that
	$ b  = u.a.u^{-1}. $
	This is an equivalence relation.
	
	\begin{proposition}\label{prop:tw_loc_repr}
		Let $A$ be a unital ring. There is a 1:1 correspondence between the set of twisted $\GL_n(A)$-local systems on $X$ up to isomorphism and the set of ring homomorphisms $\TPA_{\mathcal{T^{(*)}}}(X)\to M_n(A)_\mathcal{T^{(*)}}$ up to the action of $\GL_n(A)^{\# I_{\mathcal{T^{(*)}}}(X)}$ by conjugation.
	\end{proposition}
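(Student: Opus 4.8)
The plan is to read both directions of the correspondence off the parallel transport of the flat connection, using the finite set $I_{\mathcal{T}^{(*)}}(X)$ as a set of basepoints, so that the two conjugation ambiguities become a single change of framings at those basepoints; Proposition~\ref{prop:tw_hilb_corr} will then convert a representation at one basepoint into a local system.

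First I would go from a local system to a ring homomorphism. Starting from a twisted $\GL_n(A)$-local system $\mathcal L$ on $X$, I choose for each $x\in I_{\mathcal{T}^{(*)}}(X)$ an isomorphism of right $A$-modules $\mathcal L_x\cong A^n$. For a path $\gamma$ in $T'X$ with $\gamma(0),\gamma(1)\in I_{\mathcal{T}^{(*)}}(X)$, its holonomy $m_\gamma$ then becomes an element of $M_n(A)$, which I record as the element $(m_\gamma)_{(\gamma(1),\gamma(0))}$ of $M_n(A)_{\mathcal{T}^{(*)}}$; extending $\Z$-linearly over paths with endpoints in $I_{\mathcal{T}^{(*)}}(X)$ gives the candidate map. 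It is multiplicative because holonomy composes along concatenation and the index multiplication in $M_n(A)_{\mathcal{T}^{(*)}}$ was built to mirror concatenation; it factors through $\TPA_{\mathcal{T}^{(*)}}(X)$ because $m_\gamma$ depends only on the homotopy class of $\gamma$ and because each generator $e_{x,\theta}+\delta_{x,\theta}$ of $\mathcal I$ maps to $\Id_{(x,x)}+(-\Id)_{(x,x)}=0$, the value $m_{\delta_{x,\theta}}=-\Id$ being exactly the definition of a twisted local system. The resulting unital ring homomorphism $\Phi_{\mathcal L}$ satisfies $\Phi_{\mathcal L}(e_x)=\Id_{(x,x)}$, and I would check that rechoosing the isomorphisms $\mathcal L_x\cong A^n$, resp.\ replacing $\mathcal L$ by an isomorphic local system, conjugates $\Phi_{\mathcal L}$ by the corresponding block-diagonal element of $\GL_n(A)^{\#I_{\mathcal{T}^{(*)}}(X)}$.

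Conversely, given a unital ring homomorphism $\Phi\colon\TPA_{\mathcal{T}^{(*)}}(X)\to M_n(A)_{\mathcal{T}^{(*)}}$, I would first normalize: the $p_x:=\Phi(e_x)$ are orthogonal idempotents summing to the unit of $M_n(A)_{\mathcal{T}^{(*)}}\cong M_{n\cdot\#I_{\mathcal{T}^{(*)}}(X)}(A)$, and for a path $\gamma$ in $T'X$ from $y$ to $x$ with endpoints in $I_{\mathcal{T}^{(*)}}(X)$ the loops $\gamma.\bar\gamma$ and $\bar\gamma.\gamma$ lie in $\TPA_x(X)^\times$ resp.\ $\TPA_y(X)^\times$, so $\Phi(\gamma)$ conjugates $p_y$ to $p_x$; this should let me assume $p_x=\Id_{(x,x)}$ for all $x$, after a harmless conjugation. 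Then $\Phi$ restricts on the corner $e_x\TPA_{\mathcal{T}^{(*)}}(X)e_x=\TPA_x(X)$ to a unital ring homomorphism into $M_n(A)$ and, via $\pi^s_1(X,x)\hookrightarrow\TPA_x(X)^\times$, to a representation $\rho_x\colon\pi^s_1(X,x)\to\GL_n(A)$; since $e_{x,\theta}+\delta_{x,\theta}=0$ in $\TPA_{\mathcal{T}^{(*)}}(X)$, the fiber loop satisfies $\rho_x(\delta^+_{x,\theta})=-\Id$, so Proposition~\ref{prop:tw_hilb_corr} produces a twisted local system $\mathcal L_\Phi$. I would then verify that a different basepoint $y$ yields $\rho_y$ conjugate to $\rho_x$ via $\Phi$ of a connecting path — hence an isomorphic local system — and that moving $\Phi$ within its $\GL_n(A)^{\#I_{\mathcal{T}^{(*)}}(X)}$-orbit moves $\rho_x$ within its $\GL_n(A)$-conjugacy class; finally, unwinding definitions should give $\mathcal L_{\Phi_{\mathcal L}}\cong\mathcal L$ and $\Phi_{\mathcal L_\Phi}$ conjugate to $\Phi$, since both assignments come from the same parallel-transport data restricted to $I_{\mathcal{T}^{(*)}}(X)$.

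The hard part will be the normalization step $p_x=\Id_{(x,x)}$: it requires knowing that $\#I_{\mathcal{T}^{(*)}}(X)$ mutually conjugate orthogonal idempotents summing to the unit of the matrix ring $M_{n\cdot\#I_{\mathcal{T}^{(*)}}(X)}(A)$ are simultaneously conjugate, by a block-diagonalizable element, to the standard coordinate idempotents. I expect this idempotent-rigidity point, together with the careful identification of $M_n(A)_{\mathcal{T}^{(*)}}$ with a genuine matrix ring over $A$ and of its block-diagonal unit subgroup with $\GL_n(A)^{\#I_{\mathcal{T}^{(*)}}(X)}$, to be where the real work lies; the rest is bookkeeping of endpoints on top of Proposition~\ref{prop:tw_hilb_corr}.
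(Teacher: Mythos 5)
Your two constructions are exactly the paper's: from a local system you choose a basis of the fiber over each point of $I_{\mathcal{T}^{(*)}}(X)$ and record holonomies as elements $(m_\gamma)_{(t(\gamma),s(\gamma))}$ of $M_n(A)_{\mathcal{T}^{(*)}}$, and conversely you restrict a homomorphism to the corner $\TPA_x(X)$, pull back along $\pi_1^s(X,x)\hookrightarrow\TPA_x(X)^\times$, observe $\rho_x(\delta^+_{x,\theta})=-\Id$ from the relation $e_{x,\theta}+\delta_{x,\theta}=0$, and invoke Proposition~\ref{prop:tw_hilb_corr}. Up to that point the proposal matches the paper's proof step for step.

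The place where you depart from the paper is the normalization $\Phi(e_x)=\Id_{(x,x)}$, which you correctly identify as the only nontrivial issue but then propose to settle by an ``idempotent-rigidity'' claim that does not hold in the stated generality. For an arbitrary unital ring $A$, a family of $N=\#I_{\mathcal{T}^{(*)}}(X)$ pairwise equivalent orthogonal idempotents summing to $1$ in $M_{nN}(A)$ need not be conjugate to the coordinate idempotents: this would force the common image module $P$ to be free of rank $n$ from the single relation $P^{\oplus N}\cong A^{nN}$, which fails without cancellation hypotheses on $A$. Moreover, even when such a conjugation exists, the conjugating element lies in $\GL_{nN}(A)$ and not in the block-diagonal subgroup $\GL_n(A)^{N}$, so performing it changes the equivalence class of $\Phi$ under the group action appearing in the statement; your normalization is therefore not ``harmless'' in the sense required. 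The way out --- and what the paper's proof does implicitly, since it only ever evaluates $\varphi$ on paths and reads off the index $(t(\gamma),s(\gamma))$ --- is to interpret the homomorphisms in the statement as those compatible with the systems of idempotents, i.e.\ satisfying $\Phi(e_x)=\Id_{(x,x)}$ for all $x$ (equivalently, sending a path from $p$ to $q$ into the $(q,p)$-component). With that reading your normalization step disappears, the remaining verifications you list (independence of basepoint via $\Phi$ of a connecting path, equivariance under the two conjugation actions, and the two composites being the identity) are routine, and your argument reduces to the paper's. As written, however, the proposal rests on an unproved and in general false auxiliary claim, so it has a genuine gap at exactly the step you singled out as the hard part.
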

	
	\begin{proof}
		Given a twisted $\GL_n(A)$-local system $\mathcal{L}$ on $X$, for all $x\in I_{\mathcal{T^{(*)}}}(X)$ choose a basis of the fiber of $\mathcal{L}$ over $x$. The map
		$$\varphi : \begin{array}{rcl}
			\TPA_\mathcal{T^{(*)}}(X)&\to& M_n(A)_\mathcal{T^{(*)}}\\
			\sum \gamma &\mapsto &\sum (Hol_\mathcal{L}(\gamma))_{(t(\gamma),s(\gamma))}
		\end{array}$$
		is a ring homomorphism, where $s(\gamma)$ (resp. $t(\gamma)$) is the source (resp. the end) of $\gamma$ (which are in $I_{\mathcal{T^{(*)}}}(X)$), and $Hol_\mathcal{L}(\gamma)$ is the holonomy of $\gamma$ in $\mathcal{L}$ in the corresponding bases. The conjugacy class of $\varphi$ does not depend on the choices of the bases.
		
		Conversely, let $\varphi$ be the conjugacy class of a representation $\TPA_\mathcal{T^{(*)}}(X)\to M_n(A)_\mathcal{T^{(*)}}$, and let $x\in I_{\mathcal{T^{(*)}}}(X)$. Then $\TPA_x(X)$ contains an isomorphic copy of $\pi^s_1(X,x)$ and the restriction of $\varphi$ to $\pi^s_1(X,x)$ yield a representation $\pi^s_1(X,x)\to \GL_n(A)$ mapping $\delta_{x}^{\pm}$ to $-\Id$, which define a unique isomorphism class of twisted $\GL_n(A)$-local system by Proposition~\ref{prop:tw_hilb_corr}, having holonomies described by $\varphi$.
	\end{proof}
	
	\subsection{Framing and decoration}
	
	Let $\mathcal{L}$ be a twisted $\GL_2(A)$-local system on $S$. We say that $\mathcal{L}$ is \defin{peripherally parabolic} if for every puncture $p\in P$ there exist a parallel regular $A$-subbundle of $\mathcal{L}$ over $T'\beta_p$. A choice of such a parallel regular subbundle $L_p\subset \mathcal{L}_p\to T'\beta_p$, where $\mathcal{L}_p:=\mathcal{L}|_{T'\beta_p}$ for every $p\in P$ is called a \defin{framing} of $\mathcal{L}$. Since $L_p$ is parallel, on the quotient bundle $\mathcal{L}_p/L_p$ (which is an $A$-bundle) over $T'\beta_p$ the flat connection is also well-defined. A \defin{framed twisted $\GL_2(A)$-local system} is a pair $(\mathcal{L},(L_p)_{p\in P})$ where $(L_p)_{p\in P}$ is a framing of $\mathcal L$.
	
	Let $\mathcal{T}$ an ideal triangulation of $S$. We say a framed twisted local system $(\mathcal{L},(L_p)_{p\in P})$ is \defin{$\mathcal{T}$-transverse} if for every edge of the triangulation two subbundles corresponding to two ends of the edge are transverse.
	
	A twisted $\GL_2(A)$-local system $\mathcal{L}$ over a decorated surface $(S,\mathcal D)$ is called \defin{peripherally unipotent} if for every $\beta_p\in \mathcal D$, $p\in P$ there exist a parallel regular section $v_p$ of $\mathcal{L}$ along $T'\beta_p$ and a parallel regular section $w_p$ of the bundle $\mathcal{L}_p/L_p$ along $T'\beta_p$, where $L_p$ is the $A$-subbundle of $\mathcal{L}_p$ spanned by $v_p$. If for every $\beta_p$ such parallel regular sections $v_p$ of $\mathcal{L}_p$ and $w_p$ of $\mathcal{L}_p/L_p$ along $T'\beta_p$ are chosen, then $(\mathcal{L}, (v_p)_{p\in P}, (w_p)_{p\in P})$ is called a \defin{decorated} twisted $\GL_2(A)$-local system.
	
	\subsection{Non-abelianization of twisted local systems}
	
	In Section~\ref{sec:tw_ab}, we constructed an algebra homomorphism $SN : \TPA(S)\to \TPA(\Sigma)$. This homomorphism restricts to a ring homomorphism
	\[ SN : \TPA_\mathcal{T}(S)\to \TPA_\mathcal{T^*}(\Sigma) \]
	as mentioned in Remark~\ref{rk:TPA_subalg}. Let $\gamma\in \TPA_\mathcal{T}(S)$ be a path from $p$ to $q$, $p,q\in I_\mathcal{T}(S)$, and let $p_1,p_2$ be the two lifts of $p$ to $\Sigma$, and $q_1,q_2$ the two lifts of $q$, with $p_1,q_1$ being the sinks and $p_2,q_2$ being the sources. Then \[ SN(\gamma) = \gamma_{1,1}+\gamma_{1,2}+\gamma_{2,1}+\gamma_{2,2} \]
	where $\gamma_{j,i}$ is the sum of all terms of $SN(\gamma)$ from $p_i$ to $q_j$ ($\gamma_{i,j}$ may be 0). Instead of a formal sum, it will be more convenient to see $SN(\gamma)$ as a 2 by 2 matrix with coefficients in $\TPA_\mathcal{T^*}( \Sigma)$. The definition of the multiplication on $\TPA_\mathcal{T^*}(\Sigma)$ makes it so the map:
	\[ SN : \begin{array}{rcl}
		\TPA_\mathcal{T}(S)&\to& M_2(\TPA_\mathcal{T^*}(\Sigma))\\[1.2ex]
		\gamma&\mapsto&\begin{pmatrix}
			\gamma_{1,1}&\gamma_{1,2}\\
			\gamma_{2,1}&\gamma_{2,2}
		\end{pmatrix}
	\end{array} \]
	is a ring homomorphism.
	We also have a ring homomorphism $$\pi_A : \begin{array}{rcl}
		M_2(A_{\mathcal{T^*}})&\to& M_2(A)_{\mathcal{T}}\\[1.2ex]
		\begin{pmatrix}
			a_{q_1,p_1}&b_{q_1,p_2}\\
			c_{q_2,p_1}&d_{q_2,p_2}
		\end{pmatrix}&\mapsto&\begin{pmatrix}
			a&b\\
			c&d
		\end{pmatrix}_{q,p}
	\end{array}.$$
	Note that we can always write an element of $M_2(A_{\mathcal{T^*}})$ as the sum of elements of the form $$\begin{pmatrix}
			a_{q_1,p_1}&b_{q_1,p_2}\\
			c_{q_2,p_1}&d_{q_2,p_2}
		\end{pmatrix}$$
	(possibly with some coefficients equal to 0).
	
	\begin{prop}
		Let $\mathcal{E}$ be a twisted $A^\times$-local system over $\Sigma$ and let $\varphi : \TPA_\mathcal{T^*}(\Sigma)\to A_\mathcal{T^*}$ the corresponding ring homomorphism given by Proposition~\ref{prop:tw_loc_repr}. Then the ring homomorphism
		\[ \psi = \pi_A\circ M_2(\varphi)\circ SN : \TPA_\mathcal{T}(S)\to M_2(A)_\mathcal{T} \]
		corresponds to a peripherally parabolic twisted $\GL_2(A)$-local system on $S$, together with a $\mathcal{T}$-transverse framing.
	\end{prop}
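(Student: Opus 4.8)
The plan is to recognise $\mathcal{L}$ from $\psi$ through the dictionary of Section~\ref{sec:ab_framed}, to exhibit the framing explicitly as the ``sink line'' of the non-abelianized bundle, and to verify its two properties by a local analysis of the spectral-network lift near the punctures and along the edges of $\mathcal{T}$. Since $SN$, $M_2(\varphi)$ and $\pi_A$ are ring homomorphisms, $\psi$ is a ring homomorphism $\TPA_\mathcal{T}(S)\to M_2(A)_\mathcal{T}$, so by Proposition~\ref{prop:tw_loc_repr} (with $n=2$, $X=S$) it is the representation attached to an isomorphism class of twisted $\GL_2(A)$-local systems $\mathcal{L}$ on $S$; the requirement $\psi(\delta^+_x)=-\Id$ is automatic, as $\delta_{x,\theta}=-e_{x,\theta}$ already holds in $\TPA_\mathcal{T}(S)$ (equivalently $SN(\delta_{x,\theta})=\diag(\delta_{x_1,\theta_1},\delta_{x_2,\theta_2})$ and $\varphi$ sends each fibre loop to $-1$). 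The composite $\pi_A\circ M_2(\varphi)$ moreover equips the fibre $\mathcal{L}_x$ over each $x\in I_\mathcal{T}(S)$ with a decomposition $A^2=(Ae_1)\oplus(Ae_2)$, the first summand indexed by the \emph{sink} lift $x_1$ of $x$ and the second by the \emph{source} lift $x_2$. For a puncture $p$ put $L_p\subset\mathcal{L}|_{T'\beta_p}$ equal to $Ae_1$ in this frame over every special point of $\beta_p$; it remains to show (i) that $L_p$ is a well-defined parallel regular $A$-subbundle, so that $\mathcal{L}$ is peripherally parabolic with framing $(L_p)_{p\in P}$, and (ii) that this framing is $\mathcal{T}$-transverse.

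For (i), recall that $S_p$ is evenly covered and contains no branch point, so the only rays of $\mathcal{W}$ meeting $S_p$ are the ones emanating from $p$ (one per triangle incident to $p$), and inside $S_p$ each such ray consists of the piece ending at the sink $p_1$ (its future) and the piece ending at the source $p_2$ (its past). Cutting $T'\beta_p$ at its special points into arcs $\delta\colon a\rightsquigarrow b$ lying in $S_p$, the two standard lifts of $\delta$ run $a_1\rightsquigarrow b_1$ and $a_2\rightsquigarrow b_2$ without interchanging sheets, while any path added by a crossing begins on the source sheet (it meets a past) and ends on the sink sheet (it then follows a future); hence, in the sink/source frame, $SN(\delta)$ is an upper-triangular $2 \times 2$ matrix whose diagonal entries are $\varphi$-holonomies of honest paths of $\Sigma$, i.e.\ units of $A$. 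Upper-triangular matrices are closed under products and preserve $Ae_1$, so the holonomy of $\mathcal{L}$ along any sub-arc of $T'\beta_p$ --- in particular along the whole loop, for an internal puncture --- preserves $Ae_1$ and restricts there to an isomorphism. Thus $L_p$ is well defined, parallel and a regular $A$-subbundle.

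For (ii), let $\gamma$ be an edge of $\mathcal{T}$ from $p$ to $q$ and $\tau_\gamma$ its bent representative from the special point $x\in\beta_p$ to $y\in\beta_q$; we must check that $L_q|_y=Ae_1$ and $\psi(\tau_\gamma)(Ae_1)$ are transverse, i.e.\ that the lower-left entry $\psi(\tau_\gamma)_{2,1}$ (which records paths from the sink lift of $x$ to the source lift of $y$) is a unit of $A$. The key geometric fact is that any lift of an edge of $\mathcal{T}$ to the double cover joins a sink to a source: already in the local model $z\mapsto z^2$ the six vertices of the hexagon alternate sink/source, so each of the two lifts of an edge of the model triangle joins a sink to a source. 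Hence the standard lift of $\tau_\gamma$ that begins at $x_1$ terminates at $y_2$ and contributes precisely to the $(2,1)$ entry; and --- by the same analysis of the path-lifting rule as in (i), applied near $p$, near $q$ and in the interior of the two triangles bounded by $\gamma$, after using the cancellation lemmas of Section~\ref{sec:tw_ab} to dispose of the superfluous correction terms --- what survives in the $(2,1)$ entry is exactly this single honest path. Therefore $\psi(\tau_\gamma)_{2,1}$ is, up to sign, its $\mathcal{E}$-holonomy, hence a unit of $A$, and $\psi(\tau_\gamma)(Ae_1)$ together with $L_q|_y=Ae_1$ span $A^2$: the two $A$-lines are transverse. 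This establishes the proposition modulo the local bookkeeping.

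The real work sits in that bookkeeping: when an arc crosses several rays one must track which standard lift meets the past and which the future of each ray, and check that the corrections reorganise into the upper-triangular shape of (i), resp.\ into terms that leave the $(2,1)$ entry equal to the single standard lift in (ii) --- this is where the two homotopy-invariance lemmas of Section~\ref{sec:tw_ab} and the alternating sink/source pattern around the hexagons are used. Once the shapes of $SN(\beta_p)$ and $SN(\tau_\gamma)$ are pinned down, everything else is formal.
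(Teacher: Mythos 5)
Your argument is correct and follows essentially the same route as the paper: all spectral-network corrections run from the source sheet to the sink sheet, so the peripheral holonomy is upper triangular in the sink/source frame and preserves the sink line, while transversality along an edge reduces to the invertibility of the holonomy of a single lift of $\tau_\gamma$ joining a sink to a source. The final ``bookkeeping'' you defer is in fact even lighter than you suggest, since $\tau_\gamma$ does not cross $\mathcal{W}$ at all (the rays stay inside the triangles, running from the vertices to the branch points), so $SN(\tau_\gamma)$ is just the sum of the two standard lifts and its $(2,1)$ entry is exactly the one unit you need.
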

	
	\begin{proof}
		Let $\mathcal{E}$ be a twisted $A^\times$-local system on $\Sigma$, and let $\mathcal{L}$ be the $\GL_2(A)$-local system obtained on $S$.
		We need to show that $\mathcal{L}$ admits a flat section on any peripheral curve $\beta_p$ on $S$, i.e. that the monodromy along $\beta_p$ is upper triangular in some basis. Let $p\in P$ and $p_1,p_2$ the lifts of $p$ to $\Sigma$, $p_1$ being the sink and $p_2$ the source. Let $q\in I_{\mathcal{T}}(S)\cap\beta_p$ and $q_1,q_2$ the lifts of $q$ to $\Sigma$, $q_i\in\beta_{p_i}$. We will assume  $\beta_p$ is a loop based on $q$. The fiber $\mathcal{L}_{q}$ of $\mathcal{L}$ over $q$ can be identified with the direct sum $\mathcal{E}_{q_1}\oplus\mathcal{E}_{q_2}$ of the fibers of $\mathcal{E}$ over $q_1$ and $q_2$. Every ray of the spectral network $\mathcal{W}$ crossed by $\beta_p$ on $S$ lifts to a ray from $p_2$ to $p_1$ on $\Sigma$. This means that the lifts added by the spectral network all go from $q_1$ to $q_2$, so the image of $\beta_p$ via $SN : \TPA_\mathcal{T}(S)\to M_2(\TPA_\mathcal{T^*}(\Sigma))$ is upper triangular. Then $\psi(\beta_p)\in P_\mathcal{T}(M_2(A))$, the monodromy of $\beta_p$, is also upper triangular. The line $\mathcal{E}_{q_1}\subset \mathcal{L}_q$ is preserved by the peripheral monodromy which means that the parallel transport of $\mathcal{E}_{q_1}$ along $\beta_p$ defines a framing $L_p\subset\mathcal{L}_{\beta_p}$ around $p$. This framing is $\mathcal{T}$-transverse because for every edge $\gamma$ of $\mathcal{T}$ from $p$ to $q$, $p,q\in P$, the map $L_p \to \mathcal{L}_q/L_q$ is the holonomy of $\mathcal{E}$ along one of the lifts of $\tau_\gamma$ so it is an isomorphism.
	\end{proof}
	
	The twisted $\GL_2(A)$-local system $\mathcal{L}$ on $S$ obtained from a twisted $A^\times$-local system $\mathcal{E}$ on $\Sigma$ via this construction is called the \defin{non-abelianization} of $\mathcal{E}$. In the next part, we construct an inverse construction.
	
	%	\begin{proposition}
		%		There is a one-to-one correspondence between  $\GL_n(A)$-local systems on $\Sigma$ and twisted $\GL_n(A)$-local systems on $\Sigma$.
		%	\end{proposition}
	%	
	%	\begin{proof}
		%		There is a vector field $X$ on $\Sigma$ with isolated zeroes all of even index, compatible with the hexagonal tiling induced by the triangulation $\mathcal{T}$. %TODO Figure + description
		%		Let $Z(X)$ denote the set of zeroes of $X$. The vector field $X$ provides an embedding $\iota_X : \Sigma\setminus Z(X) \to T'\Sigma$. Given a twisted local system $\mathcal{L}$ on $\Sigma$, the restriction $\tilde{\mathcal{L}} = \mathcal{L}|_{\iota_X(\Sigma\setminus Z(X))}$ is a local system on $\Sigma\setminus Z(X)$. Since the zeroes of $X$ all have even index and the monodromy of $\mathcal{L}$ over a fiber of $T'\Sigma\to\Sigma$ is $-\Id$, the monodromy of $\tilde{\mathcal{L}}$ around a point of $Z(X)$ is trivial, so we can uniquely extend $\tilde{\mathcal{L}}$ to a local system on $\Sigma$.
		%	\end{proof}
	
	\subsection{Partial abelianization of transverse framed local systems}\label{sec:framed}
	
	Let $(\mathcal{L},(L_p)_{p\in P})$ be a $\mathcal{T}$-transverse framed twisted $\GL_2(A)$-local system on $S$.
	Our goal is to construct a twisted $A^\times$-local system $\mathcal{E}\to T'\Sigma$
	%\bs \pi^{-1}(B)$ twisted precisely over $B$
	that provides under the non-abelianization procedure the initial local system $\mathcal{L}$.

    Let $\pi'\colon\Sigma'\to S'$ be the covering as in Section~\ref{sec:spectral_networks}.	A transverse framed twisted $\GL_2(A)$-local system $\mathcal{L}$ over $S$ gives rise to a transverse framed twisted $\pi_1(S)$-equivariant $\GL_2(A)$-local system $\mathcal{L}'$ over $S'$. We obtain a parallel $A$-subbundle $L'_p$ over the preimage $T'U_p$ under $T'S'\to S'$ of every standard neighborhood $U_p$ of every puncture $p$ of $S'$. And similarly to the discussion above, on the quotient $A$-bundle $\mathcal{L}'_p/L'_p$ over $T'U_p$ the flat connection is well-defined.
	
	The spectral network $\mathcal W$ on $\Sigma$ (resp. $\mathcal W'$ on $\Sigma'$) divides the set of punctures of $\Sigma$ (resp. $\Sigma'$) in two classes: sinks of $\mathcal W$ (resp. $\mathcal W'$) and sources of $\mathcal W$ (resp. $\mathcal W'$). For every sink $p$ of $\Sigma'$ we define a flat $A$-bundle over $T'U_p$ as the pull-back of the $A$-subbundle $L'_{\pi'(T'U_p)}$. For every source $p$ of $\Sigma'$ we define a flat $A$-bundle over $T'U_p$ as the pull-back of the $A$-bundle $\mathcal{L}'/L'_{\pi'(T'U_p)}$.
	
	To construct a twisted flat $A$-bundle over $\Sigma'$ we need to ``glue'' the standard neighborhoods along cells of the (lifted) spectral network. For this, we notice that two standard neighborhoods that share a cell always correspond to punctures of different classes. Along the interior of such a cell $c$ two $A$-bundles are defined: $L'_{\pi'(T'U_p)}$ corresponding to a sink $p$ of $\Sigma'$ and $\mathcal{L}'/L'_{\pi'(T'U_q)}$ corresponding to a source $q$ of $\Sigma'$.
	
	By transversality, for every point $z\in T'c\subset T'\Sigma'$ the $A$-submodules $L'_{\pi'(T'U_p)}(\pi'(z))$ and $L'_{\pi'(T'U_q)}(\pi'(z))$ of $\mathcal{L}$ are transverse. That means that for all $z\in T'c$ the natural projection map $$a_{qp}(\pi'(z))\colon L'_{\pi'(T'U_p)}(\pi'(z))\to \mathcal{L}'/L'_{\pi'(T'U_q)}(\pi'(z))$$
	is an isomorphism. So we can identify $v\in L_{\pi'(T'U_p)}(\pi'(z))$ with $a_{qp}(\pi'(z))(v)\in \mathcal{L}/L_{\pi'(T'U_q)}(\pi'(z))$ for all $z\in T'c$. Since $a_{qp}(\pi'(z))$ is constant in any parallel frames of $L'_{\pi'(T'U_p)}$ and $\mathcal{L}'/L'_{\pi'(T'U_q)}$ along $T'c$, this provides a constant change of local trivialization over $T'c$. In other words, we obtain a flat $A$-bundle over $\Sigma'\bs B'$ that we denote by $\mathcal{E}'\to \Sigma'\bs B'$. The construction of $\mathcal{E}$ is obviously $\pi_1(S)$ equivariant and, therefore, defines a flat $A$-bundle $\mathcal{E}\to T'(\Sigma\bs B)$.
	
	The Lemma~\ref{lem:triang_rel} below provide that the holonomies around branch points are trivial. This implies that the bundle $\mathcal{E}$ can be extended also over $B$, i.e. we obtain a bundle $\mathcal{E}\to T'\Sigma$. The twisted $A^\times$-local system obtained on $\Sigma$ is called the \defin{abelianization} of $\mathcal{L}$.
	
	%TODO why are they inverse constructions ?
	Those processes are inverse to each other by construction.

	We have thus shown:
	
	\begin{theorem}
		The abelianization and non-abelianization processes define a bijection between the set of twisted $A^\times$-local systems on $\Sigma$ up to isomorphism and the set of framed $\mathcal{T}$-transverse twisted local systems on $S$ up to isomorphism.
	\end{theorem}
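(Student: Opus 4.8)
The plan is to verify that both constructions descend to isomorphism classes and that the two composites are the identity; the paper asserts this ``by construction'', and what follows is the bookkeeping I would make explicit. First I would check naturality under isomorphisms: conjugating the representation $\varphi\colon\TPA_{\mathcal{T}^*}(\Sigma)\to A_{\mathcal{T}^*}$ by an invertible element of $(A^\times)^{\#I_{\mathcal{T}^*}(\Sigma)}$ conjugates $\psi=\pi_A\circ M_2(\varphi)\circ SN$ block-diagonally by its image in $\GL_2(A)^{\#I_{\mathcal{T}}(S)}$, so non-abelianization descends to the moduli level; and abelianization descends because the gluing data $a_{qp}$ is built intrinsically from $(\mathcal{L},(L_p))$ and its flat structure. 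It then remains to treat the two composites.

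For abelianization after non-abelianization, I would start from a twisted $A^\times$-local system $\mathcal{E}$ on $\Sigma$ with representation $\varphi$ and let $(\mathcal{L},(L_p))$ be its non-abelianization. By the construction in the proof of the non-abelianization proposition, the fibre of $\mathcal{L}$ over each $q\in I_{\mathcal{T}}(S)$ is canonically $\mathcal{E}_{q_1}\oplus\mathcal{E}_{q_2}$, with $q_1$ the sink and $q_2$ the source above $q$, and the framing line at $q$ is the summand $\mathcal{E}_{q_1}$. Abelianizing $(\mathcal{L},(L_p))$: over a sink neighborhood $U_{p_1}$ the reconstructed bundle is the pull-back of the framing subbundle, hence $\mathcal{E}$ near $p_1$; over a source neighborhood $U_{p_2}$ it is the pull-back of $\mathcal{L}'/L'$, which the direct-sum decomposition identifies with $\mathcal{E}$ near $p_2$; and the isomorphisms $a_{qp}$ used to glue across a cell are exactly the off-diagonal blocks of $M_2(\varphi)(SN(\tau_\gamma))$, i.e. the holonomies of $\mathcal{E}$ along the lifts of the $\tau_\gamma$. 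Patching these identifications over all standard neighborhoods and cells produces a flat $A$-bundle isomorphic to $\mathcal{E}$ on $\Sigma\setminus B$, and since Lemma~\ref{lem:triang_rel} makes the branch-point holonomies trivial, the identification extends over $B$; so the composite returns $\mathcal{E}$.

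For non-abelianization after abelianization, I would start from a $\mathcal{T}$-transverse framed twisted $\GL_2(A)$-local system $(\mathcal{L},(L_p))$, form its abelianization $\mathcal{E}$, and compare $\mathcal{L}$ with the non-abelianization $\mathcal{L}''$ of $\mathcal{E}$. For each $q\in I_{\mathcal{T}}(S)$ lying on the edge $\gamma$ of $\mathcal{T}$ near a puncture $p$, I would fix the splitting $\mathcal{L}_q=L_p(q)\oplus L_{p'}(q)$, where $p'$ is the other endpoint of $\gamma$ and $L_{p'}(q)$ is the parallel transport along $\tau_\gamma$ of the framing at $p'$; this is a direct sum by $\mathcal{T}$-transversality, and it matches the abelianization data $\mathcal{E}_{q_1}\oplus\mathcal{E}_{q_2}$. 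In these bases, the holonomy of $\mathcal{L}$ along a path $\eta$ in $T'S$ crossing $\mathcal{W}$ transversally decomposes into blocks supported on standard neighborhoods (the holonomies of $L$ and of $\mathcal{L}/L$, the diagonal part) and blocks arising whenever $\eta$ passes from one standard neighborhood into a neighboring one across a cell (the gluing maps $a_{qp}$, the spectral-network corrections), which is precisely the combinatorics encoded by $SN(\eta)$. Hence the holonomy matrix of $\mathcal{L}$ equals $\pi_A(M_2(\varphi)(SN(\eta)))$, the holonomy of $\mathcal{L}''$; homotopy-invariance of $SN$ (the theorem of Section~\ref{sec:tw_ab}) makes this identification consistent, so $\mathcal{L}''\cong\mathcal{L}$ with reconstructed framing the sink line $L_p$, recovering the original framing.

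The hard part will be the last step: matching, term by term, the matrix coefficients produced by $SN$ --- including the signs coming from the twisting, i.e. the classes $s^\pm$, $\delta^\pm$ and the ideal $\mathcal{I}$ --- with the geometric gluing of $\mathcal{E}$ across the cells of $\mathcal{W}$, and checking near each branch point that the three spectral-network corrections assemble into the trivial holonomy predicted by Lemma~\ref{lem:triang_rel}, so that the reconstructed object is genuinely a twisted $A^\times$-local system (monodromy $-\Id$ around the fibres of $T'\Sigma\to\Sigma$) rather than merely a flat bundle on $\Sigma\setminus B$. Once this local analysis at branch points and the sign bookkeeping are settled, the global identification follows by covering $S$ by standard neighborhoods and cells and patching, and the same argument applied to both composites yields the asserted bijection.
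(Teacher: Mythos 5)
Your proposal is correct and follows exactly the route the paper takes -- indeed the paper's own ``proof'' is the single sentence that the two processes are inverse ``by construction'', and your write-up is a faithful unpacking of that claim: the sink/source splitting $\mathcal{L}_q\cong\mathcal{E}_{q_1}\oplus\mathcal{E}_{q_2}$, the identification of the cell-gluing maps $a_{qp}$ with the off-diagonal blocks of $M_2(\varphi)\circ SN$, and the appeal to Lemma~\ref{lem:triang_rel} at branch points are precisely the ingredients the paper's constructions supply. You correctly flag the sign/twisting bookkeeping near branch points as the only part still needing a term-by-term check, and that check is exactly what the homotopy-invariance theorem for $SN$ together with Lemma~\ref{lem:triang_rel} delivers.
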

	
	\begin{remark}
		The construction of $\Sigma$ depends on $\mathcal{T}$, which is implicit in the theorem.
	\end{remark}
	
	\section{Partial abelianization of decorated twisted local systems}\label{sec:ab_decorated}
	
	In this section, we apply the above construction to decorated twisted $\GL_2(A)$-local systems. This construction gives a geometrical representation of the non-commutative algebra introduced in \cite{BR}, as well as geometrical proof of the non-commutative Laurent phenomenon. We also use this construction to describe the topology of the space of framed or decorated twisted $\GL_2(A)$-local systems.
    
	\subsection{Kashiwara-Maslov map}\label{sec:Kashiwara}
	
	Let $\ell_1$, $\ell_2$, $\ell_3$ be pairwise transverse $A$-lines in $A^2$. We denote $a_{ji}\colon \ell_i\to A^2/\ell_j$ the projection maps for all $i,j\in\{1,2,3\}$, $i\neq j$. By transversality, $a_{ij}$ are $A$-linear isomorphisms.
	
	The following lemma is immediate:
	\begin{lem}\label{lem:triang_rel}
		The map $-a_{31}^{-1}a_{32}a_{12}^{-1}a_{13}a_{23}^{-1}a_{21}\colon \ell_1\to \ell_1$ is the identity map. 
	\end{lem}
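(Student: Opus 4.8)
The statement asserts an identity between two automorphisms of the $A$-line $\ell_1$, so the natural approach is to pick a convenient basis and compute. I would first choose a regular vector $e_1 \in A^2$ spanning $\ell_1$ and a regular vector $e_2$ spanning $\ell_2$, so that $(e_1, e_2)$ is a basis of $A^2$; by pairwise transversality $\ell_3$ is then spanned by a vector of the form $e_1 + e_2 c$ (or $e_1 a + e_2$, depending on normalization) for some $c \in A^\times$ — this uses that $\ell_3$ is transverse to both $\ell_1$ and $\ell_2$. In this basis, $A^2/\ell_2 \cong e_1 A$ and $A^2/\ell_1 \cong e_2 A$ via the obvious projections, and each $a_{ji}$ becomes (after identifying all these rank-one modules with $A$ using the chosen generators) multiplication by an explicit unit: for instance $a_{21}\colon \ell_1 \to A^2/\ell_2$ sends $e_1 \mapsto [e_1]$, which is the identity, while $a_{13}\colon \ell_3 \to A^2/\ell_1$ sends $e_1 + e_2 c \mapsto [e_2 c] = [e_2] c$, and so on.

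\textbf{Key steps in order.} First, set up the basis and write $\ell_3 = (e_1 + e_2 c)A$ with $c \in A^\times$. Second, for each ordered pair $(i,j)$ compute the composite $\ell_i \hookrightarrow A^2 \twoheadrightarrow A^2/\ell_j$, expressed as a map $A \to A$ after fixing the generators $e_1$ of $\ell_1$, $e_2$ of $\ell_2$, $e_1 + e_2 c$ of $\ell_3$, $[e_1]$ of $A^2/\ell_2$, $[e_2]$ of $A^2/\ell_1$, and a generator of $A^2/\ell_3$ (say the class of $e_2$, noting $[e_1] = -[e_2]c$ there). Third, substitute these six elementary maps into the word $-a_{31}^{-1} a_{32} a_{12}^{-1} a_{13} a_{23}^{-1} a_{21}$ and check that the product of the six units (times the sign $-1$) collapses to $1 \in A^\times$, i.e.\ to the identity of $\ell_1$. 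This is a finite telescoping computation: the intermediate generators cancel in pairs and the $c$'s and $c^{-1}$'s cancel, leaving a sign that is absorbed by the explicit $-1$ in the formula.

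\textbf{Main obstacle.} The only real subtlety is bookkeeping: because $A$ is noncommutative and $A^2$ is a \emph{right} $A$-module, one must be scrupulous about whether scalars act on the left or the right, about the order of composition (the paper's convention for $\gamma_1.\gamma_2$ is "$\gamma_2$ then $\gamma_1$", and one should use the same order for the $a_{ji}$), and about which identifications of the quotient modules with $A$ are being used — a careless choice produces a spurious conjugation or a wrong-handed inverse. Tracking a sign through the three quotient identifications (each of which introduces a $-1$ when one swaps the roles of $e_1$ and $e_2$, since $[e_1] = -[e_2]c$ in $A^2/\ell_3$, etc.) is what accounts for the leading $-1$; getting that sign right is the crux. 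Once the conventions are pinned down the verification is mechanical, which is why the paper calls it "immediate."
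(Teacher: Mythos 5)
Your proposal is correct, and it is essentially the argument the paper has in mind: the paper offers no proof beyond declaring the lemma ``immediate,'' and the intended verification is exactly your basis computation with $\ell_1=e_1A$, $\ell_2=e_2A$, $\ell_3=(e_1+e_2c)A$ for $c\in A^\times$. One small imprecision: the single net sign comes only from the identification of $A^2/\ell_3$ (where $[e_1]=-[e_2]c$), not from each of the three quotients, but this does not affect the conclusion that the product of the six units is $-1$, cancelling the explicit leading minus sign.
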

	
	The map $\mu_1^{23}:=a_{13}a_{23}^{-1}a_{21}\colon \ell_1\to A^2/\ell_1$ is called the \defin{Kashiwara-Maslov map} of the triple of $A$-lines $(\ell_1,\ell_2,\ell_3)$. Notice that $\mu_1^{23}=-\mu_1^{32}$.
	
	Let $\pi\colon \Sigma\to S$ be a ramified covering associated to an ideal triangulation $\mathcal T$ as before. Let $\mathcal L\to T'S$ be a framed $\mathcal T$-transverse twisted local system over $S$ and $\mathcal E\to T'\Sigma$ be the twisted $A^\times$-local system obtained from $\mathcal L$ by the partial abelianization described in Section~\ref{sec:framed}. Let $\tau\subset S$ be a triangle of $\mathcal T$ that is incident to punctures $p_1,p_2,p_3$, and the orientation of the triangle agrees with the orientation of the triple $(p_1,p_2,p_3)$. Let $H=\pi^{-1}(\tau)$ be the hexagon of $\Sigma$ that covers $\tau$. The hexagon $H$ is divided in six (open) triangles by lines of the spectral network, as on Figure~\ref{SN-triangle}. As before, let $L_i$ be a parallel $A$-subbundle of $\mathcal L\to T'\tau$ corresponding to the puncture $p_i$, $i\in\{1,2,3\}$. By construction of the local system on $\Sigma$, over every triangle of $H$ two $A$-bundles $L_i$ and $\mathcal L/L_j$, $i,j\in\{1,2,3\}$ are defined. These two bundles are glued along this triangle by the projection map $a_{ji}\colon L_i\to A^2/L_j$. We denote this triangle by $t_{ji}$. Let now $p\in T't_{21}$, then $\theta(p)\in T't_{12}$, where $\theta\colon T'\Sigma\to T'\Sigma$ is the involution associated with the covering $\pi\colon\Sigma\to S$.
	
	We take a path $\gamma\colon [0,1]\to T'H$ such that $\gamma(0)=p$, $\gamma(1)=\theta(p)$ and such that $(\theta\circ\gamma).\gamma$ is a loop in $T'H$ homotopic to the loop going once in the positive direction around the fiber of $T'\Sigma\to \Sigma$.
	
	The following proposition follows directly from the construction of the $A^\times$-local system over $T'\Sigma$:
	\begin{prop}
		The parallel transport along the path $\gamma$ agrees with the Kashiwara-Maslov map $\mu_1^{23}\colon L_1(\pi(p))\to \mathcal L/L_1(\pi(p))$.
	\end{prop}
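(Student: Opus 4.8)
The plan is to reduce the statement to a direct computation inside the single hexagon $H=\pi^{-1}(\tau)$ and to read the parallel transport off the explicit gluing of $\mathcal E$ given in Section~\ref{sec:framed}. First, since $\tau$ is contractible, $T'\tau\cong\tau\times\SS^1$ and the only nontrivial monodromy of the twisted local system $\mathcal L$ over $T'\tau$ is the $-\Id$ around the $\SS^1$-fibre; so one may fix a frame of $\mathcal L$ over $T'\tau$ in which the three peripheral sublines $L_1,L_2,L_3$ (parallel over $T'\tau$) become three fixed $A$-lines $\ell_1,\ell_2,\ell_3$ in $A^2$, pairwise transverse because the framing is $\mathcal T$-transverse, with $a_{ji}\colon\ell_i\to A^2/\ell_j$ the projection maps. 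Under these identifications $L_1(\pi(p))=\ell_1$ and $(\mathcal L/L_1)(\pi(p))=A^2/\ell_1$, and the assertion becomes: the parallel transport of $\mathcal E$ along $\gamma$, read through these identifications, equals $a_{13}a_{23}^{-1}a_{21}$.

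Next I would make the flat bundle $\mathcal E$ over $T'H$ explicit. The three rays of $\mathcal W$ through the branch point $b$ cut $H$ into six triangular regions which, traversed in the positive direction around $b$, are $t_{21},t_{23},t_{13},t_{12},t_{32},t_{31}$, the covering involution $\theta$ exchanging antipodal regions (in particular $\theta(t_{21})=t_{12}$, consistently with $\theta(p)\in T't_{12}$). Over $T't_{ji}$ the bundle $\mathcal E$ is, by construction, identified at once with the pull-back of $L_i$ and with the pull-back of $\mathcal L/L_j$, the two identifications being intertwined by $a_{ji}$; and across the ray separating two adjacent regions --- the sink ray of $p_i$ when they share the index $i$, the source ray of $p_j$ when they share the index $j$ --- the relevant identification ($L_i$, resp.\ $\mathcal L/L_j$) extends flatly and with no winding in the $\SS^1$-direction, which is exactly the ``constant change of trivialization'' of the construction. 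Consequently $\mathcal E$ is a flat $A^\times$-bundle on $T'H$ whose only monodromy is the $-\Id$ around the $\SS^1$-fibre, so its parallel transport along a path from $p$ to $\theta(p)$ depends only on the homotopy class of that path.

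Then I would compute the transport along the positively oriented route $t_{21}\to t_{23}\to t_{13}\to t_{12}$. Starting from the $L_1$-identification at $p$: inside $t_{21}$ pass to the $\mathcal L/L_2$-identification via $a_{21}$; cross the source ray of $p_2$ into $t_{23}$, keeping the $\mathcal L/L_2$-identification; inside $t_{23}$ pass to the $L_3$-identification via $a_{23}^{-1}$; cross the sink ray of $p_3$ into $t_{13}$, keeping the $L_3$-identification; inside $t_{13}$ pass to the $\mathcal L/L_1$-identification via $a_{13}$; cross the source ray of $p_1$ into $t_{12}$. The composite is $a_{13}a_{23}^{-1}a_{21}=\mu_1^{23}$.

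Finally I would check that $\gamma$ lies in the homotopy class of this route. As $H$ is a disc, $\pi_1(T'H)\cong\ZZ$ is generated by the positive fibre loop $\delta^+$, any two paths from $p$ to $\theta(p)$ differ by a power of $\delta^+$, and inserting one $\delta^+$ multiplies the transport of $\mathcal E$ by $-\Id$ while multiplying $(\theta\circ\gamma).\gamma$ by $\delta^{+2}$; so the normalization $(\theta\circ\gamma).\gamma\approx\delta^+$ selects $\gamma$ uniquely among the $\delta^+$-translates of the positive route. The one genuine subtlety is to exclude the reversed route $t_{21}\to t_{31}\to t_{32}\to t_{12}$: the same bookkeeping gives it transport $a_{12}a_{32}^{-1}a_{31}=\mu_1^{32}=-\mu_1^{23}$, so it differs from the positive route by an odd power of $\delta^+$ --- equivalently, by Lemma~\ref{lem:triang_rel}, $(\mu_1^{32})^{-1}\mu_1^{23}=-\Id$ --- and hence it too fails the normalization. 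Therefore $\gamma$ is homotopic to the positive route and the parallel transport of $\mathcal E$ along $\gamma$ is $\mu_1^{23}$, as claimed. I expect essentially all the care to be concentrated in this last orientation/twisting bookkeeping; the composition of the projection maps itself is purely formal.
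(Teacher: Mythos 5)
Your overall strategy is the right one --- reduce to the hexagon $H$, describe $\mathcal E$ over $T'H$ as the six pullback bundles glued by the maps $a_{ji}$, compute the transport along one explicit path, and use the normalization $(\theta\circ\gamma).\gamma\approx\delta^+$ to pin down the homotopy class --- and the composition $a_{13}a_{23}^{-1}a_{21}$ of gluing maps along the route $t_{21}\to t_{23}\to t_{13}\to t_{12}$ is the correct skeleton. But there is a genuine gap in the twisting bookkeeping, precisely at the point you flag as ``the one genuine subtlety.'' Over $T'U$ of a sink (resp.\ source) lift, $\mathcal E$ is the pullback $\pi^*(L_i)$ (resp.\ $\pi^*(\mathcal L/L_j)$) of a sub/quotient of the \emph{twisted} bundle $\mathcal L$ on $T'S$, so the transport of $\mathcal E$ along a piece of $\gamma$ is the transport of $\mathcal L$ along the projected piece $\pi\circ(\text{piece})\subset T'\tau$. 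Chaining the pieces and using that the $a_{ji}$ commute with parallel transport, the total transport is not $a_{13}a_{23}^{-1}a_{21}$ but
\[
\mu_1^{23}\circ P^{\mathcal L}_{\pi\circ\gamma}\big|_{L_1}=(-\Id)^{[\pi\circ\gamma]}\,\mu_1^{23},
\]
where $\pi\circ\gamma$ is a \emph{loop} in $T'\tau$ (since $\pi(\theta(p))=\pi(p)$) and $[\pi\circ\gamma]\in\pi_1(T'\tau)\cong\ZZ$ is the degree of its tangent-direction component. In the model $\pi\colon z\mapsto z^2$ this degree is $\tfrac{1}{2\pi}\bigl(\Delta\arg x+\Delta\arg v\bigr)$: the base of $\gamma$ contributes a half-turn around $b$ and the normalization forces the direction component $v$ to contribute another half-turn, so this integer is not $0$ in general and the sign of the answer is exactly what it controls. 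Your computation silently sets this factor to $+\Id$.

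The omission is visible as an internal inconsistency in your last paragraph. Once the direction components are normalized, the ``positive route'' and the ``reversed route'' are \emph{homotopic} in $T'H\simeq H\times\SS^1$ (the hexagon is convex, so the two base paths from $p$'s footpoint to $\theta(p)$'s footpoint are homotopic rel endpoints, and the $\SS^1$-components agree), hence must have equal transports; yet your bookkeeping assigns them $\mu_1^{23}$ and $\mu_1^{32}=-\mu_1^{23}$. The resolution is that the omitted monodromy factors $(-\Id)^{[\pi\circ\gamma]}$ are $-\Id$ for one route and $+\Id$ for the other, so the two computations agree only after they are included --- which also means your deduction ``the two routes differ by an odd power of $\delta^+$ because their transports differ by $-\Id$'' runs backwards: you are using the uncorrected transport to infer homotopy classes. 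A useful sanity check that the factor cannot be dropped: going all the way around $b$, the product of all six gluing maps is $-\Id$ by Lemma~\ref{lem:triang_rel}, while the holonomy of $\mathcal E$ around the branch point must be trivial for $\mathcal E$ to extend over $T'_b\Sigma$; the discrepancy is exactly the monodromy $-\Id$ of $\mathcal L$ along the projected loop. To complete the proof you must compute $[\pi\circ\gamma]$ explicitly for the normalized $\gamma$ and carry the resulting power of $-\Id$ through to the stated identification of the transport with $\mu_1^{23}$ rather than $\mu_1^{32}$.
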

	
	\subsection{Partial abelianization of transverse decorated local systems}
	
	Applying the construction of Section~\ref{sec:framed} to a peripherally unipotent local system, we get an $A^\times$-local system $\mathcal{E}\to T'\Sigma$. Moreover, the existence of sections $v_p$ and $w_p$ over $T'\beta_p$ for all $p\in P$ induces that the holonomies of $\mathcal{E}$ around punctures of $\Sigma$ are all trivial, i.e. the bundle $\mathcal{E}$ is trivial over the punctured disk around $p$ bordered by $\beta_p$. That means that the local system $\mathcal{E}\to T'\Sigma$ can be uniquely extended to the local system over $T'\bar\Sigma$. For simplicity, slightly abusing the notation, we will write $\mathcal{E} \to T'\bar\Sigma$.
	
	A decoration of $\mathcal{L}$ provides additionally a parallel section of $\mathcal{E}_p\to T'\bar\Sigma_p$, where $\bar\Sigma_p=\Sigma_p\cup\{p\}$ for all $p\in\pi^{-1}(P)$. We call the set of all those parallel sections a \defin{decoration} of the twisted $A^\times$-local system $\mathcal{E}$.
	
	\begin{theorem}
		The abelianization and non-abelianization processes define a bijection between the set of decorated twisted $A^\times$-local systems on $\Sigma$ with trivial monodromy around punctures up to isomorphism and the set of decorated $\mathcal{T}$-transverse twisted local systems on $S$ up to isomorphism.
	\end{theorem}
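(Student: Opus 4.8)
The plan is to upgrade the bijection of the previous theorem (between twisted $A^\times$-local systems on $\Sigma$ and $\mathcal{T}$-transverse framed twisted $\GL_2(A)$-local systems on $S$) to the decorated/peripherally unipotent setting, keeping track of the extra decoration data on both sides. First I would recall that the abelianization map of Section~\ref{sec:framed} is already constructed: given a decorated (hence in particular framed and peripherally unipotent) twisted $\GL_2(A)$-local system $\mathcal{L}$ on $S$, apply the partial abelianization to the underlying framed system to obtain $\mathcal{E}\to T'\Sigma$. The discussion preceding the statement shows that peripheral unipotence---the existence of parallel sections $v_p$ of $\mathcal{L}_p$ and $w_p$ of $\mathcal{L}_p/L_p$ along $T'\beta_p$---forces the holonomy of $\mathcal{E}$ around every puncture of $\Sigma$ to be trivial, so $\mathcal{E}$ extends uniquely to $T'\bar\Sigma$, and the data $(v_p,w_p)$ pushes forward to parallel sections of $\mathcal{E}_{p'}$ over each lift $p'\in\pi^{-1}(P)$, i.e.\ a decoration of $\mathcal{E}$. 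This defines the forward map on objects; I would then check it respects isomorphisms, which is immediate since the whole construction is natural.

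Next I would construct the inverse. Starting from a decorated twisted $A^\times$-local system $(\mathcal{E},(s_{p'})_{p'\in\pi^{-1}(P)})$ on $\Sigma$ with trivial monodromy around punctures, restrict $\mathcal{E}$ to $T'\Sigma$ and apply the non-abelianization procedure (the composition $\psi=\pi_A\circ M_2(\varphi)\circ SN$ of the earlier proposition, or equivalently the geometric gluing) to obtain a $\mathcal{T}$-transverse peripherally parabolic twisted $\GL_2(A)$-local system $\mathcal{L}$ on $S$ with its canonical framing $(L_p)$. The new point is to produce the decoration of $\mathcal{L}$. For a puncture $p\in P$ with lifts $p_1$ (sink) and $p_2$ (source), I would take $v_p$ to be the image of the parallel section $s_{p_1}$ under the identification $L_p\subset\mathcal{L}_p$ with (the pushforward of) $\mathcal{E}_{p_1}$ coming from the non-abelianization construction, and $w_p$ to be the image of $s_{p_2}$ under the identification of $\mathcal{L}_p/L_p$ with $\mathcal{E}_{p_2}$. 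Parallelism of $v_p$ and $w_p$ along $T'\beta_p$ follows from parallelism of $s_{p_1}, s_{p_2}$ together with the fact that along $\beta_p$ the spectral network lifts send everything from $q_1$ to $q_2$ (so the gluing identifications along $\beta_p$ are the constant maps used in the framed proof), and regularity of $v_p$ is automatic since a nonzero section of an $A^\times$-line is regular. One must also verify that this $\mathcal{L}$ with this decoration is peripherally unipotent by construction; this is built in.

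The remaining step is to check the two maps are mutually inverse on the nose, which reduces to the framed case plus a bookkeeping check that the decoration data is returned unchanged: pushing $(v_p,w_p)$ to sections of $\mathcal{E}$ and pulling back recovers $(s_{p_1},s_{p_2})$, and conversely, because in both directions the identifications $L_p\cong\mathcal{E}_{p_1}$ and $\mathcal{L}_p/L_p\cong\mathcal{E}_{p_2}$ used are literally the same isomorphisms. The main obstacle---and the only place requiring genuine care rather than invoking the previous theorem---is the passage between the trivial-monodromy condition on $\mathcal{E}$ and the peripheral unipotence of $\mathcal{L}$: one must argue that these conditions correspond exactly under abelianization, i.e.\ that triviality of the holonomy of $\mathcal{E}$ around both $p_1$ and $p_2$ is equivalent to the existence of \emph{both} parallel sections $v_p$ (of $\mathcal{L}_p$) and $w_p$ (of $\mathcal{L}_p/L_p$), not just one of them. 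This follows by unwinding the block-upper-triangular form of the peripheral holonomy of $\mathcal{L}$ (established in the proof of the non-abelianization proposition): the diagonal blocks are precisely the holonomies of $\mathcal{E}$ around $p_1$ and $p_2$, so both being trivial is equivalent to the monodromy of $\mathcal{L}$ along $\beta_p$ being unipotent with the two required parallel sections present.
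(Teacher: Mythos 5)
Your proposal matches the paper's approach: the paper states this theorem with essentially no separate proof, relying on exactly the two observations you spell out --- that peripheral unipotence of $\mathcal{L}$ corresponds to trivial holonomy of $\mathcal{E}$ around both lifts of each puncture (via the upper-triangular peripheral holonomy whose diagonal blocks are the holonomies of $\mathcal{E}$ around the sink $p_1$ and the source $p_2$), and that the decoration data $(v_p,w_p)$ transports to parallel sections of $\mathcal{E}$ near $p_1$ and $p_2$ under the identifications $L_p\cong\mathcal{E}_{p_1}$ and $\mathcal{L}_p/L_p\cong\mathcal{E}_{p_2}$, with everything else reducing to the framed bijection. One minor caveat: for general noncommutative $A$ a nonzero element need not be regular, so the decoration of $\mathcal{E}$ should be understood as consisting of fiberwise \emph{generating} parallel sections for the inverse map to return a regular $v_p$; with that reading your argument is complete and coincides with the paper's.
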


	\subsection{Non-commutative \texorpdfstring{$\mathcal{A}$}{A}-coordinates and partial abelianization}\label{sec:A-coord}
	
	Let $\mathcal{D}$ be a decoration of $S$ and let $\mathcal{T}$ be a triangulation of $S$. Let $(\mathcal{L}, (v_p)_{p\in P}, (w_p)_{p\in P})$ be a decorated twisted $\GL_2(A)$-local system on the surface $S$, and assume $\mathcal{L}$ is $\mathcal{T}$-transverse. Then for every arc $\gamma$ of $\mathcal{T}$ from $p\in P$ to $q\in P$, we can trivialize the $\GL_2(A)$-local system $\mathcal{L}$ over $T'\gamma$ and the $A$-subbundles spanned by the flat sections $v_p$ and $v_q$ are transverse. The natural projection
	\[ a_{\gamma} : L_p = \Span(v_p) \to \mathcal{L}/L_q = \Span(w_q) \]
	is an isomorphism, and we can identify it with its (1 by 1) matrix in the bases $v_p$ and $w_q$. We thus obtain a family $(a_\gamma)_{\gamma\in\mathcal{T}}$ of elements of $A^\times$ which we call \defin{non-commutative $\mathcal{A}$-coordinates} of $\mathcal{L}$. The name ``coordinates'' is a slight abuse, since they are not independent.
	
	\begin{proposition}
		For every oriented triangle $(\gamma_1,\gamma_2,\gamma_3)$ of $\mathcal{T}$, we have
		\begin{equation}\label{eq:triangle_relation}
			a_{\gamma_3}a_{\bar\gamma_2}^{-1}a_{\gamma_1} = a_{\bar\gamma_1}a_{\gamma_2}^{-1}a_{\bar\gamma_3}
		\end{equation}
	\end{proposition}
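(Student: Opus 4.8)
The plan is to reduce the triangle relation \eqref{eq:triangle_relation} to Lemma~\ref{lem:triang_rel} applied at one vertex, using the geometric interpretation of the non-commutative $\mathcal A$-coordinates together with the Kashiwara--Maslov description coming from the partial abelianization. First I would fix the oriented triangle $\tau$ of $\mathcal T$ with edges $\gamma_1,\gamma_2,\gamma_3$ and vertices $p_1,p_2,p_3$, labelled so that $\gamma_i$ is the edge opposite to $p_i$ and the cyclic order $(p_1,p_2,p_3)$ agrees with the orientation of $\tau$; trivializing $\mathcal L$ over $T'\tau$ one gets three pairwise transverse $A$-lines $\ell_i = \Span(v_{p_i})\subset A^2$, with the corresponding quotient maps $a_{ji}\colon \ell_i\to A^2/\ell_j$ as in Section~\ref{sec:Kashiwara}. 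The key bookkeeping step is to match each $a_{\gamma}$ with one of these $a_{ji}$: an oriented edge $\gamma$ from $p_i$ to $p_j$ has $a_\gamma\colon L_{p_i}\to\mathcal L/L_{p_j}$, which is exactly $a_{ji}$ once we account for the identification of $L_{p_j}$ at the two ends of $\gamma$ via parallel transport along $\tau_\gamma$ (this is where $\mathcal T$-transversality and the decoration sections $v_p$, $w_q$ enter — they pin down the trivialization and the bases $v_{p_i}$, $w_{p_j}$).

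With this dictionary the left-hand side $a_{\gamma_3}a_{\bar\gamma_2}^{-1}a_{\gamma_1}$ becomes a composition of the form $a_{13}a_{23}^{-1}a_{21}$ (up to relabelling according to which vertex $\gamma_1,\gamma_2,\gamma_3$ run between), i.e.\ precisely the Kashiwara--Maslov map $\mu_1^{23}\colon \ell_1\to A^2/\ell_1$; symmetrically the right-hand side is $\mu_1^{32}$ read with the opposite cyclic order. Then I would invoke Lemma~\ref{lem:triang_rel}, which says $-a_{31}^{-1}a_{32}a_{12}^{-1}a_{13}a_{23}^{-1}a_{21} = \Id_{\ell_1}$, equivalently $\mu_1^{32} = \mu_1^{23}$ after rearranging (the sign is absorbed because $\mu_1^{23} = -\mu_1^{32}$ and the two sides of \eqref{eq:triangle_relation} differ by exactly this reversal of orientation together with an inversion); alternatively one can read \eqref{eq:triangle_relation} directly as the statement that the cocycle $a_{\gamma_3}a_{\bar\gamma_2}^{-1}a_{\gamma_1}$ is invariant under the cyclic rotation $(\gamma_1,\gamma_2,\gamma_3)\mapsto(\gamma_2,\gamma_3,\gamma_1)$, which is what Lemma~\ref{lem:triang_rel} encodes after multiplying out the hexagonal relation. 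For the bundle/holonomy version one can instead quote the Proposition of Section~\ref{sec:Kashiwara}: the parallel transport of $\mathcal E$ along the loop $\gamma$ around the fiber inside $T'H$ realizes $\mu_1^{23}$, and consistency of this transport around the hexagon $H$ (whose six triangles $t_{ji}$ are glued by the $a_{ji}$) is exactly the triangle relation.

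The main obstacle I expect is purely combinatorial/sign bookkeeping: making the correspondence $\gamma\leftrightarrow a_{ji}$ precise requires care about (i) the orientation of each edge $\gamma$ versus $\bar\gamma$, (ii) the two special points of $\tau_\gamma$ lying on the decorating curves $T'\beta_{p_i}$, $T'\beta_{p_j}$, and (iii) the twisting, i.e.\ the fact that going around the fiber of $T'S\to S$ contributes a sign $-1$, so that the identity of Lemma~\ref{lem:triang_rel} produces \eqref{eq:triangle_relation} \emph{without} an extra sign only because the relevant composition of edge-paths around the triangle is homotopic to a lace looping once (or an even number of times) around the fiber — this is the content of the bending construction in Section~\ref{sec:top_data} and of the observation that $T'(\tau_\gamma.\tau_{\bar\gamma})$ loops once around the fiber. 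Once these signs are tracked, the proof is a one-line application of Lemma~\ref{lem:triang_rel}.
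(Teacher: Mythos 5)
The paper states this proposition without proof, immediately after Lemma~\ref{lem:triang_rel} and the Kashiwara--Maslov discussion, so your strategy --- identify each $a_\gamma$ with a projection $a_{ji}$ between the three transverse lines cut out by the framing over the triangle, and invoke Lemma~\ref{lem:triang_rel} --- is exactly the intended route.

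One point in your write-up needs to be repaired rather than just flagged. In your second paragraph you identify the left-hand side with $\mu_1^{23}=a_{13}a_{23}^{-1}a_{21}$ and the right-hand side with $\mu_1^{32}=a_{12}a_{32}^{-1}a_{31}$, and then assert that Lemma~\ref{lem:triang_rel} gives ``$\mu_1^{32}=\mu_1^{23}$''. That cannot be the conclusion: the lemma is equivalent to $\mu_1^{23}=-\mu_1^{32}$, so the naive identification proves $\mathrm{LHS}=-\mathrm{RHS}$, i.e.\ the wrong statement. The equality of the proposition only holds because the two sides are \emph{not} literally the matrices of $\mu_1^{23}$ and $\mu_1^{32}$ in a common trivialization: each coordinate $a_\gamma$ is computed using parallel transport along the specific path $T'\tau_\gamma$, and since $T'(\tau_\gamma.\tau_{\bar\gamma})$ loops once around the fiber, one has $P_{T'\tau_{\bar\gamma}}=-P_{T'\tau_\gamma}^{-1}$. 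Passing from the edge data used on the left ($\gamma_1,\bar\gamma_2,\gamma_3$) to that used on the right ($\bar\gamma_1,\gamma_2,\bar\gamma_3$) reverses all three edges, so the right-hand side equals $(-1)^3\mu_1^{32}$ in the bases in which the left-hand side equals $\mu_1^{23}$, and this $(-1)^3$ combines with $\mu_1^{23}=-\mu_1^{32}$ to give equality. You do gesture at this mechanism in your third paragraph, but the count of the compensating signs (an odd number of edge reversals, hence an odd power of the fiber loop) is the entire content of the proposition and must actually be carried out; as written, paragraphs two and three contradict each other. With that count made explicit the argument is complete and agrees with the paper's implicit one (equivalently, with the holonomy formulation: triviality of the monodromy of $\mathcal{E}$ around the branch point of each hexagon, which is Lemma~\ref{lem:triang_rel} again).
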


	The coordinates of a decorated twisted $\GL_2(A)$-local system are the holonomies of its abelianized system $\mathcal{E}$ along the lifts of the arcs $\tau_\gamma$, $\gamma\in\mathcal{T}$: for each puncture $p\in P$, the two lifts of $p$ to $\Sigma$ are a sink $p_1$ and a source $p_2$. In the neighborhood of $p_1$, the bundle $\mathcal{E}$ is the pullback of $L_p$ and in the neighborhood of $p_2$ the bundle is the pullback of $\mathcal{L}/L_p$. Now for an arc $\gamma\in\mathcal{T}$ from $p\in P$ to $q\in P$, the lifts of $\tau_\gamma$ to $\Sigma$ join a sink and a source. Denote $\gamma_1$ the lift from $p_1$ to $q_2$ and $\gamma_2$ the lift from $p_2$ to $q_1$. Then $a_\gamma$ is the holonomy of $\mathcal{E}$ along $T'\tau_{\gamma_1}$ and $a_{\bar\gamma}$ is the holonomy of $\mathcal{E}$ along $T'\bar{\tau_{\gamma_2}}$.
	
	Let $\Gamma$ be the graph embedded in $T'\Sigma$ with vertices the lifts of peripheral curves and edges the arcs $\tau_\gamma$, $\gamma\in\mathcal{T^*}$ oriented from sink to source. To each oriented edge of this graph a coordinate is associated, and we assign to the edges with reversed orientation the inverse of this coordinate. The vertices of this graph are curves around which the monodromy of $\mathcal{E}$ is trivial because $\mathcal{L}$ is decorated, so given a path on $\Gamma$, its holonomy in $\mathcal{E}$ is well-defined. Then the triangle relation \eqref{eq:triangle_relation} imply that the monodromy of the abelianized system $\mathcal{E}$ restricted to the graph $\Gamma$ is trivial around every hexagonal tile.
	
	%	Define the \defin{triangle group} $\mathbb{T}_\mathcal{T}$ of $S$ associated to the triangulation $\mathcal{T}$ to be the group generated by elements $t_\gamma, \gamma\in\mathcal{T}$ subject to the relations :
	%	\[  t_{\gamma_3}t_{\bar\gamma_2}^{-1}t_{\gamma_1} = t_{\bar\gamma_1}t_{\gamma_2}^{-1}t_{\bar\gamma_3} \]
	%	for all $(\gamma_1,\gamma_2,\gamma_3)$ oriented triangle of $\mathcal{T}$.
	
	Let $\mathcal{T}_1$ and $\mathcal{T}_2$ be two triangulations differing only by one flip. Let $p_1,p_2,p_3,p_4$ be the four (not necessarily distinct) punctures at the vertices of the quadrilateral supporting the flip, in the cyclic order such that $\mathcal{T}_1\setminus \mathcal{T}_2 = \left\lbrace \gamma_{1,3},\gamma_{3,1}\right\rbrace $ and $\mathcal{T}_2\setminus \mathcal{T}_1 = \left\lbrace \gamma_{2,4},\gamma_{4,2}\right\rbrace $ where $\gamma_{i,j}$ is the arc of the quadrilateral going from $p_j$ to $p_i$. Using the path-lifting map, we can compute the relations between the $\mathcal{A}$-coordinates associated to $\mathcal{T}_1$ and the $\mathcal{A}$-coordinates associated to $\mathcal{T}_2$.
	
	\begin{proposition}\label{prop:flip}
		Let $\mathcal{L}$ a decorated twisted $\GL_2(A)$-local system that is both $\mathcal{T}_1$-transverse and $\mathcal{T}_2$-transverse. Then its $\mathcal{A}$-coordinates with respect to $\mathcal{T}_1$ and $\mathcal{T}_2$ satisfy the following exchange relations:
		\[ a_{\gamma_{2,4}} = a_{\gamma_{2,1}}a_{\gamma_{3,1}}^{-1}a_{\gamma_{3,4}} +  a_{\gamma_{2,3}}a_{\gamma_{1,3}}^{-1}a_{\gamma_{1,4}} \]
		\[ a_{\gamma_{4,2}} = a_{\gamma_{4,1}}a_{\gamma_{3,1}}^{-1}a_{\gamma_{3,2}} +  a_{\gamma_{4,3}}a_{\gamma_{1,3}}^{-1}a_{\gamma_{1,2}} \]
	\end{proposition}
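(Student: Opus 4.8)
The plan is to reduce the two exchange relations to a single change-of-basis identity in the right $A$-module $A^2$. Fix the quadrilateral $Q$ of $S$ supporting the flip, with vertices $p_1,p_2,p_3,p_4$ in the cyclic order of the statement, and let $\mathcal{L}$ with decoration $(v_p)_{p\in P},(w_p)_{p\in P}$ be the given local system, assumed transverse to both $\mathcal{T}_1$ and $\mathcal{T}_2$. Choosing a non-vanishing vector field on the contractible set $Q$ produces a section $Q\to T'Q$, and the pull-back of $\mathcal{L}$ along it is a flat bundle over a disk, hence trivial; identify it with $Q\times A^2$. Transporting the parallel sections $v_{p_i}$ and $w_{p_i}$ (which are parallel along $T'\beta_{p_i}$, so their transport to the base point is unambiguous) into one fiber $A^2$, we obtain regular vectors $v_1,\dots,v_4\in A^2$, spanning $A$-lines $\ell_1,\dots,\ell_4$, together with regular vectors $w_i\in A^2/\ell_i$. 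The pairs of lines that must be transverse are transverse: $\ell_1,\ell_3$ and all pairs along an edge of $Q$ by $\mathcal{T}_1$-transversality, and $\ell_2,\ell_4$ by $\mathcal{T}_2$-transversality. Writing $\pi_i\colon A^2\to A^2/\ell_i$ for the projection along $\ell_i$, the definition of the non-commutative $\mathcal{A}$-coordinates reads: $a_{\gamma_{i,j}}$ is the unique unit of $A$ with $\pi_i(v_j)=w_i\,a_{\gamma_{i,j}}$.

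With this dictionary the computation is immediate. By $\mathcal{T}_1$-transversality $(v_1,v_3)$ is a basis of $A^2$, so $v_4=v_1c_1+v_3c_3$ for unique $c_1,c_3\in A$. Applying $\pi_3$ annihilates the $v_3$ term and yields $w_3a_{\gamma_{3,4}}=w_3a_{\gamma_{3,1}}c_1$, whence $c_1=a_{\gamma_{3,1}}^{-1}a_{\gamma_{3,4}}$ since $w_3$ is regular; applying $\pi_1$ gives $c_3=a_{\gamma_{1,3}}^{-1}a_{\gamma_{1,4}}$; applying $\pi_2$ gives $w_2a_{\gamma_{2,4}}=w_2a_{\gamma_{2,1}}c_1+w_2a_{\gamma_{2,3}}c_3$, which is the first relation. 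Expanding instead $v_2=v_1d_1+v_3d_3$ and applying $\pi_3$, $\pi_1$, $\pi_4$ in turn produces $d_1=a_{\gamma_{3,1}}^{-1}a_{\gamma_{3,2}}$, $d_3=a_{\gamma_{1,3}}^{-1}a_{\gamma_{1,2}}$, and finally $a_{\gamma_{4,2}}=a_{\gamma_{4,1}}d_1+a_{\gamma_{4,3}}d_3$, the second relation.

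The one point that genuinely requires care — and which I expect to be the main obstacle — is the bookkeeping of the twisting. The coordinate $a_\gamma$ is not the bare projection above but the holonomy of $\mathcal{L}$ (equivalently of the abelianized system $\mathcal{E}$) along the bent arc $\tau_\gamma\subset T'S$, whose endpoints lie on $T'\beta_{p_j}$ and $T'\beta_{p_i}$ and which is normalised so that $T'(\tau_\gamma.\tau_{\bar\gamma})$ loops once around the fiber of $T'S\to S$ (Figure~\ref{fig:bending}). The section of $T'Q$ used to trivialise $\mathcal{L}$ does not contain the arcs $\tau_\gamma$, so passing from the $\pi_i$ to the holonomies $a_{\gamma_{i,j}}$ introduces signs governed by the $-\Id$ monodromies of the twisted local system; one must check that in the specific combination of oriented edges occurring in each exchange relation these signs cancel — exactly as, and for the same reason that, no sign survives in the triangle relation~\eqref{eq:triangle_relation}, whose bare-projection version is the Maslov identity of Lemma~\ref{lem:triang_rel}, which does carry a sign. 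The cleanest way to discharge this is to compute each $a_{\gamma_{i,j}}$ as the relevant matrix entry of $\pi_A\circ M_2(\varphi)\circ SN$ applied to $\tau_{\gamma_{i,j}}$, using the path-lifting map of Section~\ref{sec:tw_ab}: the twisting is then handled once and for all by the defining relations of $\TPA$, and the change-of-basis argument above is carried out inside $M_2(A)_\mathcal{T}$. Apart from this bookkeeping, the proposition is simply the assertion that $v_2$ and $v_4$ are expressed in the basis $(v_1,v_3)$ by the indicated formulas.
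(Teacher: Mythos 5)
Your linear-algebra skeleton is correct and is genuinely different from the paper's argument: expanding $v_4$ (resp.\ $v_2$) in the basis $(v_1,v_3)$ and applying the projections $\pi_3,\pi_1,\pi_2$ (resp.\ $\pi_4$) does produce exactly the two stated formulas, \emph{provided} the quantities $a_{\gamma_{i,j}}$ really are the matrices of the maps $v_j\mapsto\pi_i(v_j)$ computed in a single common trivialization. The problem is that they are not, and the step you yourself flag as ``the main obstacle'' is precisely the one you do not carry out. Each $a_\gamma$ is defined as a holonomy along the bent arc $\tau_\gamma\subset T'S$, and transporting the sections $v_{p_i},w_{p_i}$ into one fiber over a chosen section $Q\to T'Q$ changes each of them by an unknown sign coming from the $-\Id$ fiber monodromy. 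That these signs cannot be ignored is shown by the triangle itself: the bare-projection identity of Lemma~\ref{lem:triang_rel} carries a minus sign while the $\mathcal{A}$-coordinate relation \eqref{eq:triangle_relation} does not. For the exchange relation the danger is worse: even if each of the two monomials $a_{\gamma_{2,1}}a_{\gamma_{3,1}}^{-1}a_{\gamma_{3,4}}$ and $a_{\gamma_{2,3}}a_{\gamma_{1,3}}^{-1}a_{\gamma_{1,4}}$ individually matches its bare-projection counterpart up to sign, a \emph{relative} minus sign between the two terms would wreck the formula, and nothing in your write-up excludes it. Asserting that the cancellation happens ``for the same reason'' as in the triangle relation is not a proof: there the mechanism is a closed monodromy computation around a single branch point, whereas here two different triangulations, hence two different coverings and spectral networks, are being compared.

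Your proposed repair --- computing each $a_{\gamma_{i,j}}$ as a matrix entry of $\pi_A\circ M_2(\varphi)\circ SN(\tau_{\gamma_{i,j}})$ --- points at the right tool but, executed arc by arc, still leaves open how to compare trivializations attached to different arcs. The paper's proof sidesteps this entirely: it takes a \emph{single} path $\delta$ in $T'S$ from a point $s$ on $T'\beta_{p_2}$ to a point $t$ on $T'\beta_{p_4}$ crossing the quadrilateral, and lifts this one path with both $SN_1$ and $SN_2$. With $SN_2$ the unique term from $s_1$ to $t_2$ has holonomy $a_{\gamma_{4,2}}$ in $\mathcal{E}_2$; with $SN_1$ there are exactly two such terms, with holonomies $a_{\gamma_{4,1}}a_{\gamma_{3,1}}^{-1}a_{\gamma_{3,2}}$ and $a_{\gamma_{4,3}}a_{\gamma_{1,3}}^{-1}a_{\gamma_{1,2}}$ read off by retracting onto the graph $\Gamma$. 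Since both expressions compute the same map $L_{p_2}\to\mathcal{L}_{p_4}/L_{p_4}$ (the non-abelianization of either $\mathcal{E}_i$ is $\mathcal{L}$), the relation follows with all signs handled once and for all by the homotopy invariance of $SN$ in the twisted path algebra. If you want to keep your more elementary change-of-basis argument, you must either fix a base point and a path system and verify the signs explicitly, or set up the projections $\pi_i$ inside $\TPA_{\mathcal{T}}(S)$ from the start; as written, the key step is deferred rather than proved.
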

	
	\begin{proof}
		For $i\in\left\lbrace 1,2,3,4\right\rbrace $, let $p'_i,p''_i$ be the two lifts of $p_i$ to $\Sigma$ where $p'_i$ is the sink and $p''_i$ is the source. Let $s\in I_{\mathcal{T}_1}(S)\cap I_{\mathcal{T}_2}(S)$ be the intersection of $T'\beta_{p_2}$ and $\gamma_{2,1}$ and let $t\in I_{\mathcal{T}_1}(S)\cap I_{\mathcal{T}_2}(S)$ be the intersection of $T'\beta_{p_4}$ and $\gamma_{3,4}$. Let $\delta$ be a path in $T'S$ from $s$ to $t$ as in figure \ref{fig:flip1}.
		\begin{figure}[h!]
			\includegraphics[scale = 0.5]{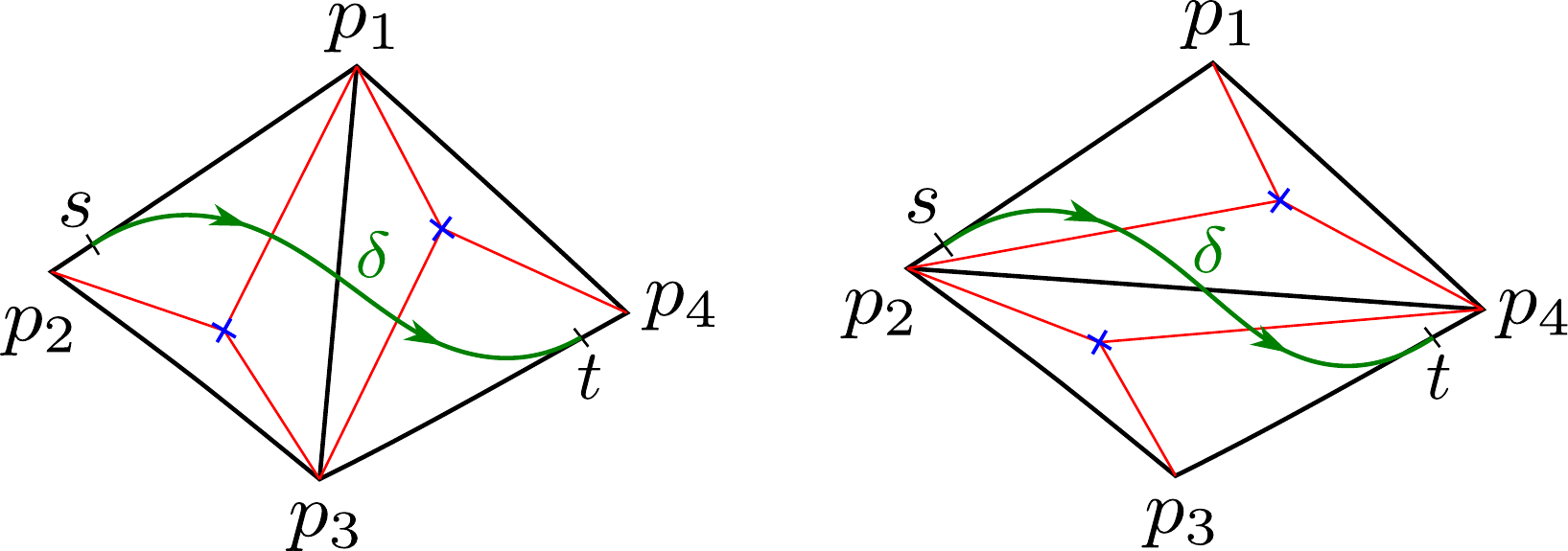}
			\caption{The path $\delta$ on $S$ with triangulation $\mathcal{T}_1$ on the left and with triangulation $\mathcal{T}_2$ on the right.}
			\label{fig:flip1}
		\end{figure}
		
		The holonomy of $\mathcal{L}$ along $\gamma$ does not depend on the triangulation.
		
		Let $\Sigma_1$ and $\mathcal{W}_1$ be the ramified covering and the spectral network associated to the triangulation $\mathcal{T}_1$ and $\Sigma_2,\mathcal{W}_2$ the ones associated to $\mathcal{T}_2$. The corresponding path-lifting maps will be denoted $SN_1$ and $SN_2$, and the corresponding abelianizations of $S$ will be denoted $\mathcal{E}_1$ and $\mathcal{E}_2$.
		
		First, let's lift $\delta$ to $\Sigma_2$ using $SN_2$. Let $s_1,s_2$ the lifts of $s$ to $\Sigma_2$, $s_1$ being the sink and $s_2$ the source. Similarly, let $t_1,t_2$ the lifts of $t$, $t_1$ being the sink and $t_2$ the source. We get \[ SN_2(\delta) =  \delta_1 + \delta_2 + \delta'_1 + \delta'_2 + \delta'_3\]
		where $\delta_1$ is a standard lift from $s_1$ to $t_2$, $\delta_2$ is a standard lift from $s_2$ to $t_1$, $\delta'_1$ is a spectral lift from $s_2$ to $t_2$, $\delta'_2$ is a spectral lift from $s_2$ to $t_1$ and $\delta'_3$ is a spectral lift from $s_1$ to $t_1$ (see Figure~\ref{fig:flip2}). The path $\delta_1$ is the only lift going from $s_1$ to $t_2$, and its holonomy in $\mathcal{E}_2$ in the corresponding bases is $a_{\gamma_{4,2}}$ since it is homotopic to $\tau_{\gamma_{4,2}}$ precomposed with a piece of $\beta_{p_2}$ and postcomposed with a piece of $\beta_{p_4}$, both of which have trivial holonomies. Since $\mathcal{L}$ is the non-abelianization of $\mathcal{E}_2$, this means that the map $L_{p_2}\to\mathcal{L}_{p_4}/L_{p_4}$ obtained by trivializing $\mathcal{L}$ along $\delta$ is exactly $a_{\gamma_{4,2}}$.
		
		\begin{figure}[h!]
			\includegraphics[scale=0.7]{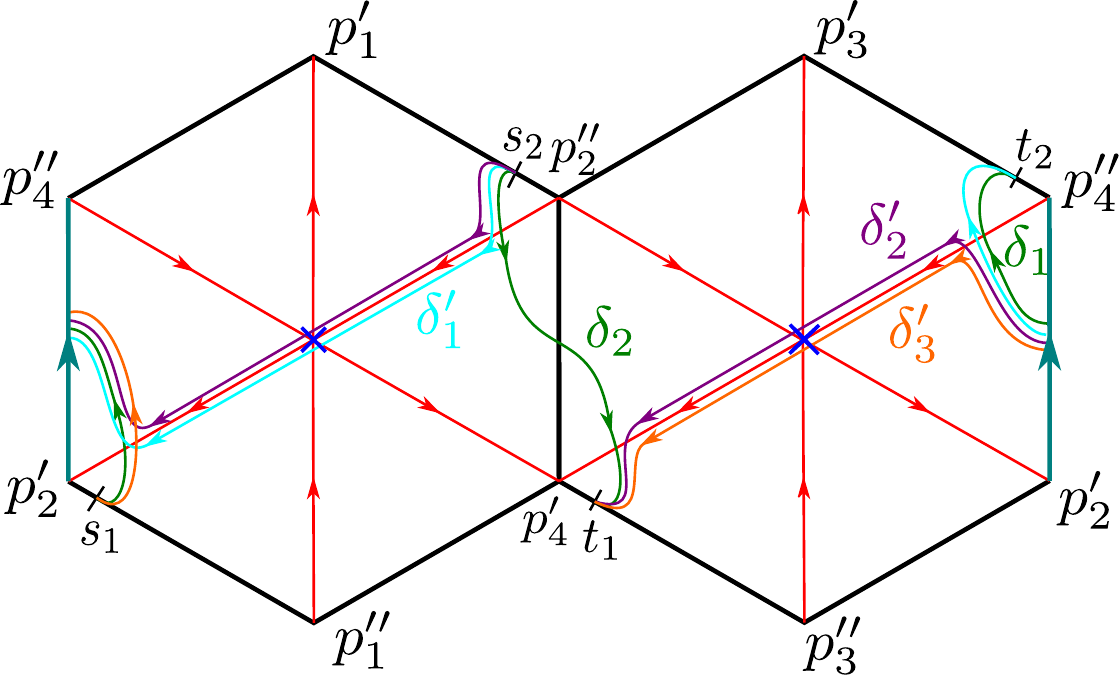}
			\caption{All the lifts of $\delta$ to $\Sigma_2$ using $SN_2$.}
			\label{fig:flip2}
		\end{figure}
		
		Now we will lift $\delta$ to $\Sigma_1$ using $SN_1$. We will keep the same notations as in the previous paragraph. We get \[ SN_1(\delta) =  \delta_1 + \delta_2 + \delta'_1 + \delta'_2 + \delta'_3\]
		where $\delta_1$ is a standard lift from $s_1$ to $t_2$, $\delta_2$ is a standard lift from $s_2$ to $t_1$, $\delta'_1$ is a spectral lift from $s_1$ to $t_1$, $\delta'_2$ is a spectral lift from $s_1$ to $t_2$ and $\delta'_3$ is a spectral lift from $s_2$ to $t_2$ (see Figure~\ref{fig:flip3}). The paths going from $s_1$ to $t_2$ are $\delta_1$ and $\delta'_2$, and their holonomies in $\mathcal{E}_1$ in the corresponding bases are respectively $a_{\gamma_{4,3}}a_{\gamma_{1,3}}^{-1}a_{\gamma_{1,2}}$ and $a_{\gamma_{4,1}}a_{\gamma_{3,1}}^{-1}a_{\gamma_{3,2}}$. These are obtained by retracting the paths on the graph $\Gamma$, as the oriented edges of $\Gamma$ have holonomies given by the $\mathcal{A}$-coordinates. Since $\mathcal{L}$ is also the non-abelianization of $\mathcal{E}_1$, this means that the map $L_{p_2}\to\mathcal{L}_{p_4}/L_{p_4}$ obtained by trivializing $\mathcal{L}$ along $\delta$ must be equal to the holonomy of $\delta_1+\delta'_2$, which give the formula:
		\[ a_{\gamma_{4,2}} = a_{\gamma_{4,1}}a_{\gamma_{3,1}}^{-1}a_{\gamma_{3,2}} +  a_{\gamma_{4,3}}a_{\gamma_{1,3}}^{-1}a_{\gamma_{1,2}} \]
		The formula for $a_{\gamma_{2,4}}$ is obtained similarly.
		\end{proof}
    
    	\begin{figure}[h!]
			\includegraphics[scale=0.7]{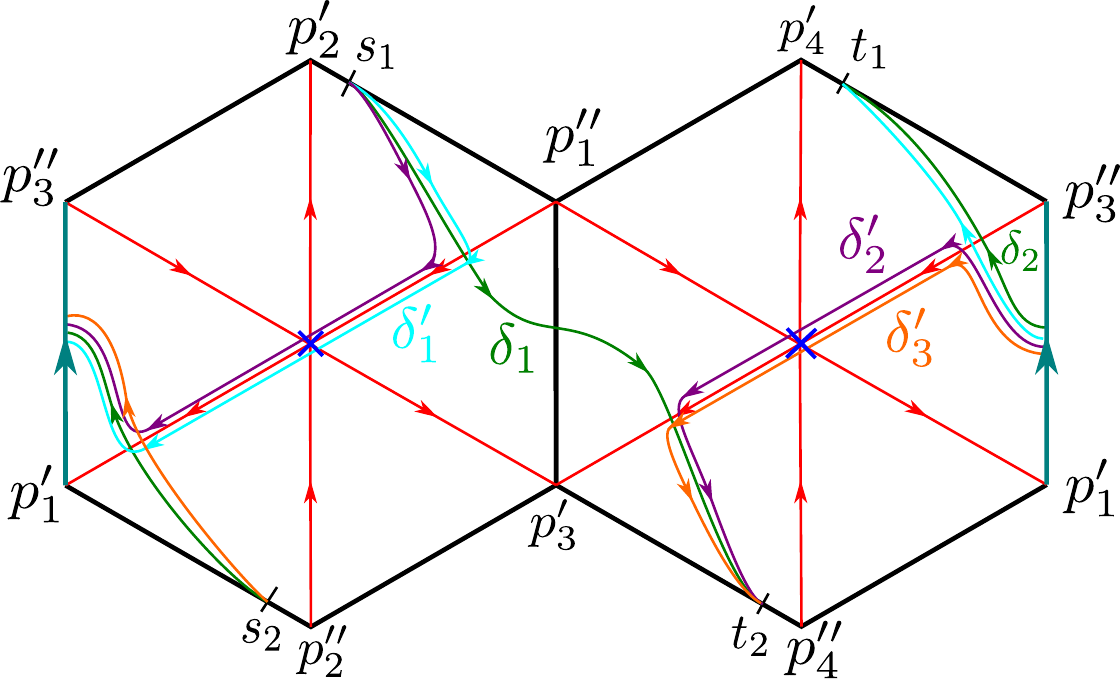}
			\caption{All the lifts of $\delta$ to $\Sigma_1$ using $SN_1$.}
			\label{fig:flip3}
		\end{figure}
	
	This gives a geometric realization of the non-commutative algebra $\mathcal{A}_S$ introduced in \cite{BR}. Using the same type of arguments as above, we can give a topological/geometrical proof of the Laurent phenomenon for the cluster algebra of a polygon:
	
	\begin{theorem}
		Let $n\geq 3$ and let $S_n$ the closed disk with $n$ punctures on the boundary. Let $i,j\in\left\lbrace 1,\dots,n\right\rbrace $, $i\neq j$. Then for every triangulation $\mathcal{T}$ of $S_n$ and every decorated twisted $\GL_2(A)$-local system $\mathcal{L}$ that is both $\mathcal{T}$-transverse and $(i,j)$-transverse, the $\mathcal{A}$-coordinate $a_{\gamma_{i,j}}$ is a non-commutative Laurent polynomial in the $\mathcal{A}$-coordinates $(a_\gamma)_{\gamma\in\mathcal{T}}$ associated to the triangulation $\mathcal{T}$.
	\end{theorem}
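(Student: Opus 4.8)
The plan is to compute the coordinate $a_{\gamma_{i,j}}$ directly from the path-lifting map $SN_{\mathcal{T}}$ associated with $\mathcal{T}$, following the method of the proof of Proposition~\ref{prop:flip}. Since $\mathcal{L}$ is both $\mathcal{T}$-transverse and $(i,j)$-transverse, its abelianization $\mathcal{E}\to T'\Sigma$ with respect to $\mathcal{T}$ is defined, $\mathcal{L}$ is the non-abelianization of $\mathcal{E}$, and the $\mathcal{A}$-coordinates $(a_\gamma)_{\gamma\in\mathcal{T}}$ are the holonomies of $\mathcal{E}$ along the lifted arcs $\tau_\gamma$; let $\varphi\colon\TPA_{\mathcal{T^*}}(\Sigma)\to A_{\mathcal{T^*}}$ be the ring homomorphism attached to $\mathcal{E}$. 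First I would realize $a_{\gamma_{i,j}}$ as a matrix entry of a holonomy of $\mathcal{L}$: pick special points $s$ on $\beta_{p_j}$ and $t$ on $\beta_{p_i}$ that belong to $I_{\mathcal{T}}(S)$ (i.e.\ lie on edges of $\mathcal{T}$ incident to $p_j$, resp.\ $p_i$) and a path $\delta$ in $T'S$ from $s$ to $t$ that agrees with $\tau_{\gamma_{i,j}}$ up to small arcs of $\beta_{p_j}$ and $\beta_{p_i}$, along which the decoration sections are parallel so that $\mathcal{E}$ has trivial holonomy. Since $\mathcal{L}$ is the non-abelianization of $\mathcal{E}$, the holonomy of $\mathcal{L}$ along $\delta$ is $\psi(\delta)=\pi_A\circ M_2(\varphi)\circ SN_{\mathcal{T}}(\delta)$, and transversality shows that $a_{\gamma_{i,j}}$ is exactly the entry of $\psi(\delta)$ sending $L_{p_j}$ — the component of the sink lift $s_1$ of $s$ — to $\mathcal{L}/L_{p_i}$ — the component of the source lift $t_2$ of $t$; equivalently, $a_{\gamma_{i,j}}$ is the $\mathcal{E}$-holonomy of the sum of all terms of $SN_{\mathcal{T}}(\delta)$ running from $s_1$ to $t_2$.

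Next I would expand $SN_{\mathcal{T}}(\delta)$: it is a finite sum of paths $\delta_1,\dots,\delta_N$ in $T'\Sigma$ — the two standard lifts of $\delta$, plus the extra lift created at each transverse intersection of $\delta$ with $\mathcal{W}$ — and all endpoints lie among the two lifts of $s$ and the two lifts of $t$. For each $\delta_k$ that runs from $s_1$ to $t_2$, I would evaluate its $\mathcal{E}$-holonomy $\varphi(\delta_k)$ by homotoping $\delta_k$, rel its endpoints (which lie in $I_{\mathcal{T^*}}(\Sigma)$), onto the graph $\Gamma\subset T'\Sigma$ whose vertices are the points of $I_{\mathcal{T^*}}(\Sigma)$ on the lifted peripheral curves and whose edges are the lifted arcs $\tau_\gamma$ ($\gamma\in\mathcal{T^*}$) together with the arcs of the peripheral curves between consecutive special points. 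This homotopy exists because the complementary regions of $\Gamma$ in $\Sigma$ are the interiors of the hexagonal tiles and the (punctured) standard neighborhoods of the punctures, so that $\Gamma\hookrightarrow\Sigma$, and likewise $\Gamma\hookrightarrow T'\Sigma$, is surjective on fundamental groups; hence every path with endpoints on $\Gamma$ is homotopic rel endpoints to an edge-path in $\Gamma$, without changing its holonomy in the flat bundle $\mathcal{E}$. Along such an edge-path each oriented $\tau$-edge contributes one of the coordinates $a_\gamma^{\pm1}$ ($\gamma\in\mathcal{T}$, counted with both orientations) and each peripheral arc contributes $1$ (the decoration sections being parallel), so $\varphi(\delta_k)$ is a non-commutative Laurent monomial in $(a_\gamma)_{\gamma\in\mathcal{T}}$.

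Summing over the finitely many contributing indices $k$ then gives $a_{\gamma_{i,j}}=\sum_k\varphi(\delta_k)$, a non-commutative Laurent polynomial in $(a_\gamma)_{\gamma\in\mathcal{T}}$, which is the assertion; the very shape of $SN_{\mathcal{T}}$ — finitely many terms, each a product with no inverses other than those attached to reversed edges — is what makes the expression manifestly Laurent, with no denominators beyond the $a_\gamma^{-1}$, even though the $a_\gamma$ obey the triangle relations~\eqref{eq:triangle_relation}. I expect the main work to be the bookkeeping of the first step — pinning down precisely which matrix entry of $\psi(\delta)$ equals $a_{\gamma_{i,j}}$ and tracking the twisting in $T'S$ and $T'\Sigma$ — together with the verification in the second step that a representative of $\delta$ can be chosen inside a regular neighborhood of $\Gamma\cup(\text{peripheral curves})$, so that the extra lifts produced by $SN_{\mathcal{T}}$, which travel along rays of $\mathcal{W}$ and through branch points inside the hexagonal tiles, really do retract onto $\Gamma$; this is the point where one uses that the monodromy of $\mathcal{E}$ around each hexagonal tile (via the triangle relation) and around each puncture of $\Sigma$ (via the decoration) is trivial. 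Note that one cannot instead iterate Proposition~\ref{prop:flip}, since $\mathcal{L}$ need not be transverse to the intermediate triangulations of a flip sequence joining $\mathcal{T}$ to a triangulation that contains $\gamma_{i,j}$.
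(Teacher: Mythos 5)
Your proposal is correct and follows essentially the same route as the paper: lift a path $\delta$ representing $\gamma_{i,j}$ via $SN_{\mathcal{T}}$, identify $a_{\gamma_{i,j}}$ with the sum of the $\mathcal{E}$-holonomies of the terms of $SN_{\mathcal{T}}(\delta)$ running from $s_1$ to $t_2$, and retract each such term onto the graph $\Gamma$ to exhibit it as a non-commutative Laurent monomial in the $(a_\gamma)_{\gamma\in\mathcal{T}}$. The only (harmless) difference is in the first step: the paper pins down the identification of $a_{\gamma_{i,j}}$ with the $(s_1,t_2)$-component of the holonomy of $\mathcal{L}$ along $\delta$ by lifting $\delta$ with respect to an auxiliary triangulation $\mathcal{T}_0$ containing $\gamma_{i,j}$ (so that the unique $s_1\to t_2$ term has holonomy $a_{\gamma_{i,j}}$), whereas you read it off directly from the definition of the $\mathcal{A}$-coordinates together with the parallelism of the decoration along the peripheral arcs.
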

	
	\begin{proof}
		All the edges of the form $\gamma_{i,i+1}$, with $i\in P$ ordered cyclically, belong to every triangulation of $S_n$ so the result is immediate. Now let $i,j\in\left\lbrace 1,\dots,n\right\rbrace $, $i\neq j\pm 1$. Let $\mathcal{T}_0$ be a triangulation of $S_n$ containing the edges $\gamma_{i,j}$, $\gamma_{i,j-1}$ and $\gamma_{i-1,j}$. Such a triangulation always exists when $i\neq j\pm 1$. Let $s\in I_{\mathcal{T}_0}(S_n)\cap I_{\mathcal{T}}(S_n)$ be the intersection of $\beta_j$ and $\gamma_{j-1,j}$ and let $t\in I_{\mathcal{T}_0}(S_n)\cap I_{\mathcal{T}}(S_n)$ be the intersection of $\beta_i$ and $\gamma_{i-1,i}$. Let $\delta$ be the path from $s$ to $t$ drawn in Figure~\ref{fig:LP}.
		
		\begin{figure}[h!]
			\includegraphics[scale=0.7]{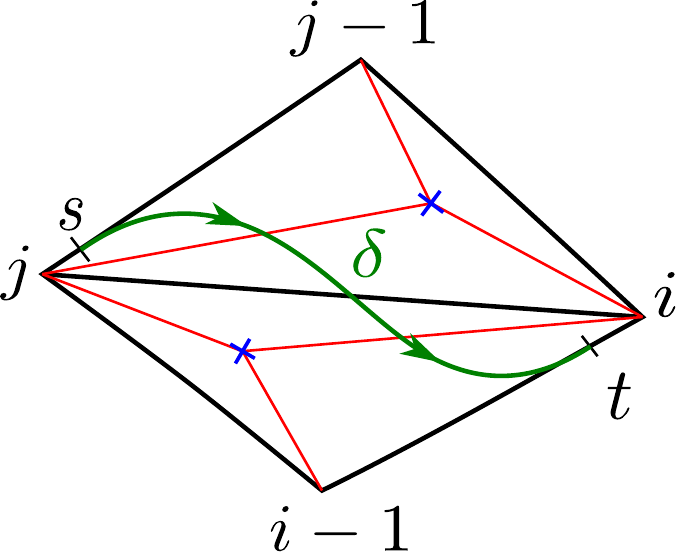}
			\caption{The path $\delta$ in the triangulation $\mathcal{T}_0$. Only the quadrilateral $(i-1,i,j-1,j)$ is drawn.}
			\label{fig:LP}
		\end{figure}
		
		As we have seen in the proof of the flip relation (and keeping the same notations), in the spectral network lift of $\delta$ with respect to the triangulation $\mathcal{T}_0$ the only term from $s_1$ to $t_2$ has the holonomy $a_{\gamma_{i,j}}$ in the abelianization of $\mathcal{L}$ with respect to $\mathcal{T}_0$. This means that the map $L_j\to\mathcal{L}_i/L_i$ obtained by trivializing $\mathcal{L}$ on $\delta$ is $a_{\gamma_{i,j}}$.
		
		Let $\mathcal{E}$ be the abelianization of $\mathcal{L}$ with respect to $\mathcal{T}$. In spectral network lift of $\delta$ with respect to the triangulation $\mathcal{T}$, let $\delta' = \delta'_1+\dots+\delta'_r$ be the sum of all paths from $s_1$ to $t_2$. Each $\delta'_k$ has a holonomy in $\mathcal{E}$ that is a monomial in the coordinates $(a^{\pm 1}_\gamma)_{\gamma\in\mathcal{T}}$ as it retracts on the graph $\Gamma$. Since $\mathcal{L}$ is the non-abelianization of $\mathcal{E}$, the map $L_j\to\mathcal{L}_i/L_i$ obtained by trivializing $\mathcal{L}$ on $\delta$ is equal to the sum of the holonomies of the $\delta'_k$ in $\mathcal{E}$, so it is a Laurent polynomial in the $\mathcal{A}$-coordinates $(a_\gamma)_{\gamma\in\mathcal{T}}$.
	\end{proof}
	
	Using these $\mathcal{A}$-coordinates, we can describe precisely the changes on the $A^\times$-local system on $\Sigma$ induced by a flip in the triangulation. We use the same notations as in Proposition~\ref{prop:flip}. 
	Let $\mathcal{L}$ be a framed twisted $\GL_2(A)$-local system on $S$ that is transverse with respect to both $\mathcal{T}_1$ and $\mathcal{T}_2$. Let $\mathcal{E}_1$ (resp. $\mathcal{E}_2$) be the $A^\times$-local system on $\Sigma$ obtained by abelianizing $\mathcal{L}$ with respect to $\mathcal{T}_1$ (resp. $\mathcal{T}_2$). These changes on the abelianized local system are supported in the lift $C_Q$ of the quadrilateral $Q$ surrounding the flip, which is homeomorphic to a cylinder with four punctures on each boundary components in $\Sigma$. Let $\gamma$ be a loop on $T'\Sigma$. If $\gamma$ only cross one of the two boundary component of $\bar C_Q$ then the monodromies of $\gamma$ in $\mathcal{E}_1$ and $\mathcal{E}_2$ are equal. Suppose $\gamma$ crosses exactly once each of the two boundary components of $\bar C_Q$. Let $\gamma_Q$ be the loop going around $C_Q$ with the same orientation as the boundary of $C_Q$ containing the sinks lifts of $p_2$ and $p_4$ (we refer to this boundary as the \defin{positive} one, and the other one as \defin{negative}).

    \begin{remark}
        We think of the holonomy of $\gamma_Q$ in $\mathcal{E}$ as a generalization in the non-commutative setting of Fock-Goncharov's $\mathcal{X}$-coordinate of the quadrilateral $Q$. If $A=\RR$, the holonomy of $\gamma_Q$ is the cross-ratio of the four lines in $\RR^2$ given by the framing of $\mathcal{L}$.
    \end{remark}

    Up to homotopy, we can assume $\gamma$ is going through at least one point $x_0\in I_{\mathcal{T}^*_1}(\Sigma)\cap I_{\mathcal{T}^*_2}(\Sigma)$ on one of the eight external edges of the hexagon tiling of $Q$. We also choose a representative of $\gamma_Q$ based at $x_0$. Let $b$ be a basis of the fiber of $\mathcal{L}_1$ over $x_0$. Since $x_0$ is not in the interior of the cylinder supporting the flip in $\Sigma$, the fibers of $\mathcal{E}_1$ and $\mathcal{E}_2$ over $x_0$ are the same. Let $Y_1\in A^\times$ (resp. $Y_2$) be the holonomy of $\gamma$ in $\mathcal{E}_1$ (resp. $\mathcal{E}_2$).

    \begin{prop}
        If the part of $\gamma$ inside $C_Q$ goes from the positive boundary to the negative boundary, then 
        $$Y_2 = Y_2(1+X).$$
        If the part of $\gamma$ inside $C_Q$ goes from the negative boundary to the positive boundary, then 
        $$ Y_2 = Y_1(1+X^{-1})^{-1}$$
    \end{prop}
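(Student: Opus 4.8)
The plan is to reduce the statement to a computation inside the cylinder $C_Q$, and then to recognise the resulting correction factor by means of the triangle relation of Section~\ref{sec:Kashiwara}. First I would localise: up to homotopy one may write $\gamma=\eta\cdot\sigma$, where $\sigma$ is the portion of $\gamma$ lying in $C_Q$ (a path from one boundary component of $\bar C_Q$ to the other, based at $x_0$) and $\eta$ is the remaining portion, which lies in $\Sigma\setminus\mathrm{int}(C_Q)$. Since the spectral networks $\mathcal{W}_1$ and $\mathcal{W}_2$ coincide outside $C_Q$ and the abelianization of Section~\ref{sec:framed} is assembled cell by cell from the framing data of $\mathcal{L}$, the flat bundles $\mathcal{E}_1$ and $\mathcal{E}_2$, with their trivializations, agree on $T'(\Sigma\setminus\mathrm{int}(C_Q))$; in particular their fibres over the two endpoints of $\eta$, which lie on $\partial C_Q$, are the same. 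Hence $Hol_{\mathcal{E}_1}(\eta)=Hol_{\mathcal{E}_2}(\eta)$, so
\[ Y_2 \;=\; Y_1\cdot Hol_{\mathcal{E}_1}(\sigma)^{-1}\,Hol_{\mathcal{E}_2}(\sigma), \]
and it remains to compute $\Delta:=Hol_{\mathcal{E}_1}(\sigma)^{-1}Hol_{\mathcal{E}_2}(\sigma)$, an element of $A^\times$ based at $x_0$.

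Next I would compute each transport separately. Homotoping $\sigma$ rel endpoints, once for each $i$, into a path travelling through the cells of $\mathcal{W}_i\cap C_Q$ and along the peripheral curves, one reads off from the explicit spectral network on a hexagon (Figure~\ref{SN-triangle}) and from the construction of $\mathcal{E}_i$ — over a sink cell $\mathcal{E}_i$ is the pull-back of a framing line $L_p$, over a source cell it is the pull-back of $\mathcal{L}/L_p$, and across a ray these are glued by a projection $a_{ji}$ — that $Hol_{\mathcal{E}_i}(\sigma)$ is a Kashiwara-Maslov map $\mu^{(i)}$ of the triple of framing lines at the vertices of the triangle of $\mathcal{T}_i$ that $\sigma$ crosses, pre- and post-composed with parallel transports inside the standard neighbourhoods of the two endpoints; these outer transports are literally the same for $i=1$ and $i=2$, so they cancel and $\Delta$ equals $(\mu^{(1)})^{-1}\mu^{(2)}$ up to conjugation by the transport near the endpoint opposite $x_0$.

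To identify this element I would expand $\mu^{(1)}$ and $\mu^{(2)}$ in the six projections $a_{ji}$ attached to $Q$ and use Lemma~\ref{lem:triang_rel} (equivalently the triangle relation~\eqref{eq:triangle_relation}) to trade the triangulation $\mathcal{T}_1$ of $Q$ for $\mathcal{T}_2$; the product $(\mu^{(1)})^{-1}\mu^{(2)}$ then collapses to $1+X$, where $X$ is the non-commutative cross-ratio of $L_{p_1},\dots,L_{p_4}$ — this is the non-commutative Pl\"ucker identity, the same manipulation that underlies Proposition~\ref{prop:flip}. By the construction of $\mathcal{E}$ over $C_Q$ the holonomy of the cylinder loop $\gamma_Q$ is the alternating composition of the four corner projections, which is exactly this cross-ratio; after transporting everything back to $x_0$ one gets $X=Hol_{\mathcal{E}}(\gamma_Q)$ and hence $Y_2=Y_1(1+X)$ in the case where $\sigma$ runs from the positive to the negative boundary. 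The remaining case is obtained by applying the first to the reversed loop $\bar\gamma$ and to the reversed cylinder loop, whose holonomy is $X^{-1}$: using $1+X^{-1}=X^{-1}(1+X)$ the factor $(1+X)$ turns into $(1+X^{-1})^{-1}$, so $Y_2=Y_1(1+X^{-1})^{-1}$.

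The step I expect to cost the most work is the orientation bookkeeping in the last two paragraphs: tracking the twist ambiguity $\mu_1^{23}=-\mu_1^{32}$ and the $\pm\Id$ winding around the $T'$-fibre so that $\mu^{(1)}$ and $\mu^{(2)}$ appear with the correct sign, verifying that the two peripheral transports genuinely cancel and correctly conjugating the cross-ratio back to the base point $x_0$, and pinning down the homotopy class of $\gamma$ for which the clean formulas hold. Once the correct six-term expressions for $\mu^{(1)}$ and $\mu^{(2)}$ are available, the algebra collapsing $(\mu^{(1)})^{-1}\mu^{(2)}$ to $1+X$ is routine — it is the triangle/Pl\"ucker computation already performed for the flip relation.
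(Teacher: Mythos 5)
The paper states this proposition without a proof, so I am measuring your argument against the machinery the paper actually develops (the lifting map $SN$ and the proof of Proposition~\ref{prop:flip}) rather than against a written argument. Your localization step is sound and is exactly what the surrounding text licenses: $\mathcal{E}_1$ and $\mathcal{E}_2$ coincide outside the interior of $C_Q$, so writing $\gamma=\eta.\sigma$ with $\sigma$ the crossing segment reduces everything to showing $\Delta:=Hol_{\mathcal{E}_1}(\sigma)^{-1}Hol_{\mathcal{E}_2}(\sigma)=1+X$. The gap is in how you propose to establish that identity. Each $Hol_{\mathcal{E}_i}(\sigma)$ is a single composition of gluing projections $a_{ji}$ and parallel transports, hence a monomial in these maps, and so is $\Delta$; but Lemma~\ref{lem:triang_rel} and relation~\eqref{eq:triangle_relation}, which you cite as the engine of the collapse, are themselves monomial identities, and no substitution using them can turn a monomial into the binomial $1+X$. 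The additive term has a different source: the decomposition $\mathcal{L}=L_{p_2}\oplus L_{p_4}$ over the quadrilateral, i.e., in the paper's language, the fact that the holonomy of $\mathcal{L}$ along $\pi(\sigma)$ equals the sum of the holonomies of all its $SN_i$-lifts. For one of the two triangulations $\sigma$ has a single lift between the relevant sink and source cells, for the other it has two, and comparing the two expansions --- exactly as in the proof of Proposition~\ref{prop:flip} --- gives $Hol_{\mathcal{E}_2}(\sigma)=Hol_{\mathcal{E}_1}(\sigma)+Hol_{\mathcal{E}_1}(\sigma')$, whence $\Delta=1+Hol_{\mathcal{E}_1}(\bar\sigma.\sigma')$, and one must then check that $\bar\sigma.\sigma'$ is homotopic to $\gamma_Q$ based at $x_0$ with the positive orientation. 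That lift-counting argument is the actual content of the proposition and is missing from your sketch; the purely algebraic ``Pl\"ucker computation'' you defer to does not appear in the paper in that form.

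A second, related problem: deducing the second case from the first by reversing $\gamma$ lands the correction factor on the wrong side. Applying case one to $\bar\gamma$ yields $Y_2^{-1}=Y_1^{-1}(1+X')$, i.e.\ $Y_2=(1+X')^{-1}Y_1$ with the factor on the \emph{left}, whereas the claim is $Y_2=Y_1(1+X^{-1})^{-1}$ with the factor on the \emph{right}; in a non-commutative algebra reconciling these requires identifying $X'$ with $Y_1X^{-1}Y_1^{-1}$, which is precisely the basepoint and orientation bookkeeping you postpone. It is safer to handle the second case directly by the same lift count with the roles of $\mathcal{T}_1$ and $\mathcal{T}_2$ exchanged, which produces $Y_1=Y_2(1+X^{-1})$ and hence the stated formula. (Note also that the first display of the proposition surely intends $Y_2=Y_1(1+X)$; the $Y_2$ on the right-hand side is a typo, which your argument implicitly corrects.)
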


    \begin{remark}
        The element $1+X^{-1}\in A$ is invertible because of the transversality of $\mathcal{L}$ with respect to $\mathcal{T}_2$.
    \end{remark}

	\subsection{Topology of the moduli space of framed twisted local systems}
	
    In this section, we describe the topology of the moduli space of framed twisted $\GL_2(A)$-local systems on $S$ that are transverse to a fixed triangulation $\T$.
	
	As we have seen, framed twisted $\GL_2(A)$-local systems on $S$ that are transverse with respect to a fixed triangulation $\T$ are in 1:1-correspondence with twisted $A^\times$-local systems on $\Sigma$. Since $\Sigma$ has punctures, the space of twisted and non-twisted $A^\times$-local systems are homeomorphic. So we obtain the following theorem, using the same notations as in Proposition~\ref{prop:topology_Sigma}:
	
	\begin{teo}
		The moduli space of framed (twisted) $\GL_2(A)$-local systems on $S$ that are transverse with respect to a fixed triangulation $\T$ is homeomorphic to the moduli space of  (twisted) $A^\times$-local systems on $\Sigma$ which is homeomorphic to
		$(A^\times)^{1-4\chi(\bar S)+2p+\sum n_i}/A^\times$
		where $A^\times$ acts diagonally by conjugation on $(A^\times)^{1-4\chi(\bar S)+2p+\sum n_i}$.
	\end{teo}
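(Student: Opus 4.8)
The plan is to deduce the homeomorphism by composing three identifications, the first of which is already available from Section~\ref{sec:framed}. First, I would invoke the bijection of Section~\ref{sec:framed}: abelianization and non-abelianization are mutually inverse maps between isomorphism classes of framed $\T$-transverse twisted $\GL_2(A)$-local systems on $S$ and isomorphism classes of twisted $A^\times$-local systems on $\Sigma$. To upgrade this to a homeomorphism of moduli spaces, I would work in the representation-variety model of Proposition~\ref{prop:tw_loc_repr}. There, both constructions are assembled out of parallel transport, the path-lifting homomorphism $SN$ of Section~\ref{sec:tw_ab}, and the projection maps $a_{ji}$ of Section~\ref{sec:Kashiwara}, so that each holonomy of the output system is a fixed word in the holonomies of the input and in the entries $a_{ji}$, which are invertible by transversality. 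Since the ring operations of $A$ (including inversion on $A^\times$) are continuous, these formulas and their inverses are continuous; they are also equivariant for the conjugation actions, so they descend to mutually inverse continuous maps of the quotients. Hence the moduli space of framed $\T$-transverse twisted $\GL_2(A)$-local systems on $S$ is homeomorphic to the moduli space of twisted $A^\times$-local systems on $\Sigma$.

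Second, I would show the moduli space of twisted $A^\times$-local systems on $\Sigma$ is homeomorphic to that of ordinary $A^\times$-local systems on $\Sigma$. By Proposition~\ref{prop:tw_hilb_corr} with $n=1$, the former is the space of representations $\rho\colon\pi^s_1(\Sigma)\to A^\times$ with $\rho(\delta^+)=-1$, modulo conjugation. Since $\Sigma$ is not closed, $\pi_1(\Sigma)$ is free and the central extension $1\to\ZZ/2\ZZ\to\pi^s_1(\Sigma)\to\pi_1(\Sigma)\to 1$ splits, so a choice of splitting identifies $\pi^s_1(\Sigma)$ with $\pi_1(\Sigma)\times\ZZ/2\ZZ$. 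Because $-1$ is a central element of $A^\times$ with $(-1)^2=1$, every representation of $\pi_1(\Sigma)$ extends uniquely to such a twisted $\rho$ by sending the $\ZZ/2\ZZ$ factor to $-1$; this is an $A^\times$-equivariant bijection which is bicontinuous in the coordinates of the representation variety, and it descends to the desired homeomorphism of moduli spaces.

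Third, I would compute the moduli space of $A^\times$-local systems on $\Sigma$ directly. It is $\Hom(\pi_1(\Sigma),A^\times)/A^\times$, where $A^\times$ acts by simultaneous conjugation (a nontrivial action even in this rank one setting when $A^\times$ is not abelian). By Corollary~\ref{counting}, $\pi_1(\Sigma)$ is free; let $N$ be its rank, computed there in terms of the data of Proposition~\ref{prop:topology_Sigma}. Fixing a free generating set of $\pi_1(\Sigma)$ gives a homeomorphism $\Hom(\pi_1(\Sigma),A^\times)\cong(A^\times)^N$ under which the conjugation action becomes the diagonal conjugation action, and composing this with the two homeomorphisms above yields the statement.

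I expect the only genuinely non-formal point to be the first step: one must fix the topology on the moduli space of framed twisted $\GL_2(A)$-local systems and check that abelianization and non-abelianization are continuous in both directions. Once the representation-variety model of Proposition~\ref{prop:tw_loc_repr} is chosen this is immediate, since the abelianization formulas are matrix words in holonomies and in the transversality-invertible maps $a_{ji}$; the remaining two steps are routine computations with free groups and with the topology of $\Sigma$ already determined in Proposition~\ref{prop:topology_Sigma} and Corollary~\ref{counting}.
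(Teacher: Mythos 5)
Your proposal is correct and follows essentially the same route as the paper's (very terse) argument, which chains the abelianization/non-abelianization bijection of Section~\ref{sec:framed}, the identification of twisted with untwisted $A^\times$-local systems (valid because $\Sigma$ is punctured), and the identification of the moduli of $A^\times$-local systems on $\Sigma$ with $\Hom(\pi_1(\Sigma),A^\times)/A^\times\cong (A^\times)^N/A^\times$ for the free group $\pi_1(\Sigma)$; your extra care about fixing the topology via the representation-variety model and checking bicontinuity of (non-)abelianization fills in a point the paper leaves implicit. One caveat: the rank given by Corollary~\ref{counting} is $N=1-4\chi(\bar S)+4p+\sum n_i$, so your argument produces that exponent rather than the $1-4\chi(\bar S)+2p+\sum n_i$ printed in the statement --- a discrepancy internal to the paper (the introduction's version of the theorem agrees with $N$), not a flaw in your proof.
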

	%	\note{I'm not sure if we use the notation $\pi^s_1(\Sigma)$ for the fundamental group with signs. What is our notation for it?}{\color{blue} I changed it to be $\pi^s_1$}
	\begin{rem}\label{rk:comp_tec}
	    In~\cite{GRW} the         authors prove the same result using different technics. They define local systems on some appropriate graphs over $S$ and parametrize them using coordinates that are similar to Fock-Goncharov's $\GL_n$-cluster $\mathcal X$-coordinates~\cite{FG}.
	\end{rem}
	
	Since any twisted peripherally unipotent $\GL_2(A)$-local system on $S$ has exactly one framing, we obtain:
	
	\begin{cor}
		The moduli space of twisted peripherally unipotent $\GL_2(A)$-local systems on $S$ whose unique framing is transverse with respect to a fixed triangulation $\T$ is homeomorphic to the moduli space of  twisted $A^\times$-local systems on $\bar\Sigma$.
	\end{cor}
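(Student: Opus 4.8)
The plan is to realize the asserted homeomorphism as a composition of three identifications, each established (or essentially established) earlier in the paper. First, recall the fact noted just above the corollary: a twisted peripherally unipotent $\GL_2(A)$-local system on $S$ carries exactly one framing, and this framing varies continuously with the local system; so the map that forgets the framing is a homeomorphism from the moduli space of framed $\mathcal{T}$-transverse twisted $\GL_2(A)$-local systems whose underlying system is peripherally unipotent onto the moduli space in the statement. Next, by the theorem of Section~\ref{sec:framed}, abelianization identifies the moduli space of all framed $\mathcal{T}$-transverse twisted $\GL_2(A)$-local systems on $S$ with the moduli space of twisted $A^\times$-local systems on $\Sigma$. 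So what remains is to pin down the image, under abelianization, of the peripherally unipotent locus.

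I would then prove that this image is exactly the set of twisted $A^\times$-local systems on $\Sigma$ whose monodromy around every puncture of $\Sigma$ is trivial. One direction is recorded at the start of Section~\ref{sec:ab_decorated}: if $\mathcal{L}$ is peripherally unipotent, the parallel sections $v_p$ and $w_p$ force the abelianized system $\mathcal{E}$ to have trivial monodromy around all punctures of $\Sigma$. For the converse, I would take a twisted $A^\times$-local system $\mathcal{E}$ on $\Sigma$ with trivial peripheral monodromy, form its non-abelianization $\mathcal{L}$, and use the computation in the proof of the non-abelianization proposition of Section~\ref{sec:ab_framed}: near a puncture $p$ with sink lift $p_1$ and source lift $p_2$, the monodromy of $\mathcal{L}$ along $T'\beta_p$ is upper triangular in the basis adapted to $\mathcal{L}_q\cong\mathcal{E}_{q_1}\oplus\mathcal{E}_{q_2}$, with diagonal entries the $\mathcal{E}$-monodromies around $p_1$ and $p_2$. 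Since those are $1$, the parallel line $L_p$ furnishes a parallel section $v_p$ with trivial induced holonomy, and the quotient $\mathcal{L}_p/L_p$ has trivial monodromy, hence a parallel section $w_p$; thus $\mathcal{L}$ is peripherally unipotent. Finally, as noted in Section~\ref{sec:ab_decorated}, a twisted $A^\times$-local system on $\Sigma$ with trivial monodromy around all punctures extends uniquely to one on $\bar\Sigma$, and conversely restriction along $T'\Sigma\hookrightarrow T'\bar\Sigma$ produces such a local system; this correspondence is a homeomorphism of the two moduli spaces. Composing the three homeomorphisms gives the statement.

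The main obstacle is the identification in the middle step --- recognizing ``$\mathcal{L}$ peripherally unipotent on $S$'' as precisely ``the abelianization has trivial monodromy around every puncture of $\Sigma$'' --- and in particular the converse inclusion, which rests on the upper-triangular shape of the peripheral monodromy of a non-abelianized local system. Once these set-level bijections are in hand, the remaining verifications --- that forgetting the (unique) framing, abelianization, and extension over the filled-in punctures are each continuous with continuous inverse --- are routine and inherited from the corresponding statements already proved for the larger moduli spaces.
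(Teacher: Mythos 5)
Your proposal is correct and follows essentially the same route the paper takes (the paper derives this corollary immediately from the preceding theorem, the observation that a peripherally unipotent system has a unique framing, and the remark in Section~\ref{sec:ab_decorated} that trivial peripheral monodromy of $\mathcal{E}$ is equivalent to unipotence of $\mathcal{L}$ and allows the unique extension over $T'\bar\Sigma$). Your write-up is somewhat more careful than the paper's, in that you explicitly verify the converse inclusion (trivial peripheral monodromy of $\mathcal{E}$ implies the non-abelianization is peripherally unipotent), which the paper leaves implicit.
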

	
	\begin{cor}
		The moduli space of decorated twisted peripherally unipotent $\GL_2(A)$-local systems on $S$ that are transverse with respect to a fixed triangulation $\T$ is homeomorphic to the product of the moduli space of twisted $A^\times$-local systems on $\bar\Sigma$ and $(A^\times)^{p}$.
	\end{cor}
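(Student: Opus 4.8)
The plan is to bootstrap from the preceding corollary by analysing the map that forgets a decoration. By the theorem identifying decorated local systems under (non-)abelianization, the moduli space in the statement is homeomorphic to the moduli space $\mathcal{M}^{\mathrm{dec}}(\bar\Sigma)$ of decorated twisted $A^\times$-local systems on $\bar\Sigma$, a decoration being the datum, for every puncture $p'\in\pi^{-1}(P)$ of $\Sigma$, of a parallel regular section of $\mathcal{E}$ near $p'$. Forgetting these sections yields a continuous surjection $F\colon \mathcal{M}^{\mathrm{dec}}(\bar\Sigma)\to \mathcal{M}(\bar\Sigma)$ onto the moduli space of (undecorated) twisted $A^\times$-local systems on $\bar\Sigma$, which by the preceding corollary is exactly the target appearing in the statement. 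It therefore suffices to show that $F$ is, up to homeomorphism, a trivial fibre bundle with fibre $(A^\times)^{p}$.

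To compute the fibre over the class of a fixed $\mathcal{E}$, note that around each puncture of $\Sigma$ the monodromy of $\mathcal{E}$ is trivial (this is precisely what makes $\mathcal{E}$ extend over $\bar\Sigma$), so the parallel regular sections of $\mathcal{E}$ near such a puncture form an $A^\times$-torsor. The source/sink dichotomy of the spectral network $\mathcal{W}$ groups the $2p$ punctures of $\Sigma$ into the $p$ pairs $\{p_1,p_2\}$ lying over the punctures $p$ of $S$, the sink datum at $p_1$ recording the decoration vector $v_p$ of $\mathcal{L}$ and the source datum at $p_2$ recording $w_p$. Using the way $\mathcal{E}$ is glued out of the subbundles $L_p$ and the quotients $\mathcal{L}/L_p$ across the cells of $\mathcal{W}$ (i.e.\ the Kashiwara--Maslov identifications of Section~\ref{sec:Kashiwara}), together with the action of $\Aut(\mathcal{E})$ on the set of decorations, one checks that the decoration data, over a fixed $\mathcal{E}$ and up to isomorphism, is parametrised by exactly one copy of $A^\times$ per puncture of $S$, hence by $(A^\times)^{p}$. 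The same count can be read off from the non-commutative $\mathcal{A}$-coordinates of Section~\ref{sec:A-coord}: a $\T$-transverse decorated system is recorded by the tuple $(a_\gamma)_{\gamma\in\T}$ subject to the triangle relation~\eqref{eq:triangle_relation}, the underlying undecorated class being recovered from the induced holonomies on $\bar\Sigma$, while the remaining freedom is a rescaling of the peripheral data at the $p$ punctures.

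It then remains to trivialise $F$. Fixing once and for all a parallel reference frame of $\mathcal{E}$ near each of the $p$ sinks determines a canonical decoration for every $\mathcal{E}$, depending continuously on $\mathcal{E}$; together with the fibrewise $(A^\times)^{p}$-action by rescaling of the peripheral sections, this exhibits the total space as the product $\mathcal{M}(\bar\Sigma)\times(A^\times)^{p}$. All the identifications used above are built from parallel transport and from the spectral network lift $SN$, hence continuous with continuous inverses, so the bijections are homeomorphisms; and the integer $p$ depends only on $S$ by the remark following Proposition~\ref{prop:topology_Sigma}.

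The main obstacle is the fibre computation: correctly matching the $2p$ peripheral sections on $\Sigma$ against the $p$-dimensional family that survives once the constraints coming from the gluing of $\mathcal{E}$ and from the automorphisms are taken into account. Once this bookkeeping is done, the continuity statements, the existence of the trivialising section and the independence of $\T$ are all routine given the results already established.
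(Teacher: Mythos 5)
The paper states this corollary without an explicit proof, so there is no argument to compare yours against line by line; your overall strategy (pass to decorated twisted $A^\times$-local systems on $\bar\Sigma$ via the decorated abelianization theorem, then study the map forgetting the decoration) is the natural one. The problem is that the one step carrying all the content --- the fiber computation --- is not carried out but only asserted (``one checks that\dots''), and as asserted it does not hold up. A decoration of $\mathcal{E}$ is a parallel regular section near \emph{each} of the $2p$ punctures of $\Sigma$: the section at the sink $p_1$ records $v_p$ and the section at the source $p_2$ records $w_p$, and these two choices are independent of one another (the Kashiwara--Maslov gluings of $\mathcal{E}$ across cells impose no relation between them --- they only build $\mathcal{E}$ out of $L_p$ and $\mathcal{L}/L_q$ for \emph{different} punctures $p,q$). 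So the set of decorations of a fixed $\mathcal{E}$ is a torsor under $(A^\times)^{2p}$, while $\Aut(\mathcal{E})$ is a subgroup of a single copy of $A^\times$ acting diagonally and freely on that torsor. The fiber of your forgetful map over $[\mathcal{E}]$ is therefore $(A^\times)^{2p}/\Aut(\mathcal{E})$, which is not $(A^\times)^p$ (for $p\geq 2$ it is too large for any $\mathcal{E}$; for $p=1$ it equals $A^\times$ only when $\Aut(\mathcal{E})$ is all of $A^\times$). Since $\Aut(\mathcal{E})$ jumps with $\mathcal{E}$, the map $F$ is not even a fiber bundle, so the subsequent trivialization step cannot be run. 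That step is also ill-posed on its own terms: a ``parallel reference frame of $\mathcal{E}$ near each sink'' depends on $\mathcal{E}$ and cannot be fixed once and for all; what is needed is a continuous section of $F$ over the whole moduli space, which must be constructed, not declared.

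The honest global computation goes as follows and shows where the tension lies. Choosing a base point and a trivialization of the fiber there, a decorated system is a pair $(\rho,s)$ with $\rho$ a twisted representation of $\pi_1^s(\bar\Sigma)$ into $A^\times$ and $s\in(A^\times)^{2p}$ the tuple of section values, and isomorphism is the simultaneous action of $c\in A^\times$ by $\rho\mapsto c\rho c^{-1}$, $s\mapsto cs$. This action is free on the $s$-factor, so the decorated moduli space is homeomorphic to $\mathrm{Rep}\times(A^\times)^{2p-1}$ --- in particular the conjugation quotient on the representation factor disappears, and the result is not of the form (moduli space of twisted $A^\times$-local systems on $\bar\Sigma$) $\times$ (a power of $A^\times$). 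To salvage the statement you would need either to exhibit a genuine constraint tying the sink and source sections together and killing the extra $A^\times$-factors (I see none in the definitions of Section~\ref{sec:ab_framed}), or to argue that the corollary is using a different notion of decoration or of the moduli space than the one you (reasonably) extracted from the text. As it stands, your proof has a gap exactly at the point you yourself flag as ``the main obstacle,'' and the bookkeeping you defer does not come out the way you claim.
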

	
	\section{Symplectic groups over involutive algebras ans symplectic local systems}\label{sec:symplectic_case}
	
	Involutive algebras are an important class of non-commutative algebras. Over involutive algebras, generalizations of many classical groups can be constructed (e.g. orthogonal groups, symplectic groups). In this chapter, we define algebras with anti-involutions and symplectic groups over such algebras that were introduced and studied in~\cite{ABRRW}. Further, we introduce framed twisted symplectic local system and characterize them in terms of partial abelianization introduced before.
	
	\subsection{Involutive algebras}
	
	Let $A$ be a unital associative, possibly non-commutative %finite-dimensional 
	$\R$-algebra.
	
	\begin{df}\label{def:antiinv}
		An \defin{anti-involution} on $A$ is a $\R$-linear map $\sigma\colon A\to A$ such that
		\begin{itemize}
			\item $\sigma(ab)=\sigma(b)\sigma(a)$;
			\item $\sigma^2=\Id$.
		\end{itemize}
		An \defin{involutive $\R$-algebra} is a pair $(A,\sigma)$, where $A$ is a $\R$-algebra and $\sigma$ is an anti-involution on $A$.
	\end{df}
	
	\begin{df}
		Two elements $a,a'\in A$ are called \defin{congruent}, if there exists $b\in A^\times$ such that $a'=\sigma(b)ab$.
	\end{df}
	
	\begin{df} An element $a\in A$ is called \defin{$\sigma$-symmetric} if $\sigma(a)=a$. An element $a\in A$ is called \defin{$\sigma$-anti-symmetric} if $\sigma(a)=-a$. We denote
		$$A^{\sigma}:=\Fix_A(\sigma)=\{a\in A\mid \sigma(a)=a\},$$
		$$A^{-\sigma}:=\Fix_A(-\sigma)=\{a\in A\mid \sigma(a)=-a\}.$$
	\end{df}
	
	\begin{df}
		The closed subgroup $$U_{(A,\sigma)}=\{a\in A^\times\mid \sigma(a)a=1\}$$ of $A^\times$ is called the \defin{unitary group} of $A$. The Lie algebra of $U_{(A,\sigma)}$ agrees with $A^{-\sigma}$.
	\end{df}
	
	\begin{df}\label{df:cone}
		Let $(A,\sigma)$ be an $\R$-algebra with an anti-involution. We define two set of squares:
		$$A^\sigma_+:=\left\{a^2\mid a\in (A^\sigma)^\times\right\},\;A^\sigma_{\geq 0}:=\left\{a^2\mid a\in A^\sigma \right\}.$$
	\end{df}
	
	\begin{rem}
		Since the algebra $A$ is unital, we always have the canonical copy of $\R$ in $A$, namely $\R\cdot 1$ where $1$ is the unit of $A$. We will always identify $\R\cdot 1$ with $\R$. Moreover, since $\sigma$ is linear, for all $k\in \R$, $\sigma(k\cdot 1)=k\sigma(1)=k\cdot 1$, i.e. $\R\cdot 1\subseteq A^\sigma$ and $\R_{>0}\cdot 1\subseteq A^\sigma_+$.
	\end{rem}
	
	\begin{df}\label{df:Herm_alg}
		A unital associative finite dimensional $\R$-algebra with an anti-involution $(A,\sigma)$ is called \defin{Hermitian} if for all $x,y\in A^\sigma$, $x^2+y^2=0$ implies $x=y=0$.
	\end{df}
 
	\begin{rem}
		In~\cite{ABRRW}, the property to be Hermitian is defined in the same way for algebras with an anti-involution over any real closed field. In this paper, we are discussing only Hermitian algebras over $\R$. \end{rem}
	
	\begin{rem}\label{rk:pc_cone}
		In~\cite{ABRRW} is shown that, if $(A,\sigma)$ is a Hermitian algebra, then $A^\sigma_+$ is an open proper convex cone in $A^\sigma$, where proper means that the set does not contain (affine) lines.
	\end{rem}	

    If $(A,\sigma)$ is Hermitian, for an element $a\in A^\sigma$ the \defin{signature} can be defined, which is a bounded function $\sgn\colon A^\sigma\to \Z$ that is invariant under congruence by elements of $A^\times$. The elements of maximal signature are precisely the elements of $A^\sigma_+$. For more details about the signature see~\cite{ABRRW}.
	
	\subsection{Symplectic groups over non-commutative algebras}\label{sec:symplectic}
	
	Let $A$ be a unital associative finite dimensional $\R$-algebra with an anti-involution $\sigma$. We consider $A^2$ as a right $A$-module over $A$.
	
	\begin{df}
		Let $\omega(x,y):=\sigma(x)^T\Omega y$ with $\Omega=\Ome{1}$. The group
		$$\Sp_2(A,\sigma):=\Aut(\omega)=\{g\in M_2(A)\mid \sigma(g)^T\omega g=\omega\}$$
		is the \defin{symplectic group} $\Sp_2$ over $(A,\sigma)$. The form $\omega$ is called the \defin{standard symplectic form} on $A^2$.
	\end{df}
	
	We have
	$$\Sp_2(A,\sigma)=\left\{\begin{pmatrix}
		a & b \\
		c & d
	\end{pmatrix}\mid \sigma(a)c,\,\sigma(b)d\in A^\sigma,\,\sigma(a)d-\sigma(c)b=1\right\}\subseteq \GL_2(A)$$
	We can also determine the Lie algebra $\spp_2(A,\sigma)$ of $\Sp_2(A,\sigma)$:
	$$\spp_2(A,\sigma)=\left\{\begin{pmatrix}
		x & z \\
		y & -\sigma(x)
	\end{pmatrix}\mid x\in A,\;y,z\in A^\sigma\right\}\subseteq M_2(A).$$
	
	\begin{rem}
		In~\cite{ABRRW} is shown that, if $A$ is a Hermitian algebra, then $\Sp_2(A,\sigma)$ is a Hermitian Lie group of tube type.
	\end{rem}
	
	Let $(x,y)$ be a basis of $A^2$. We say that this basis is \defin{isotropic} if $\omega(x,x)=\omega(y,y)=0$. We say that this basis is \defin{symplectic} if furthermore $\omega(x,y)=1$.
	
	Let $x\in A^2$ be a regular isotropic element, i.e. $\omega(x,x)=0$. We call the set $xA:=\{xa\mid a\in A\}$ an \defin{isotropic $A$-line}. The space of all isotropic $A$-lines is denoted by $\Is(\omega)$.
	
	\subsection{Symplectic local systems}
	
	We consider a twisted $\GL_2(A)$-local system $\mathcal L\to T'S$ over $S$. We say that $\mathcal L$ is a \defin{twisted $\Sp_2(A,\sigma)$-local system} (or just \defin{twisted symplectic local system}) if there exists a parallel field of the standard symplectic 2-form $\omega\colon\mathcal L\times\mathcal L\to A$ on $T'S$. We say that $\mathcal L$ is \defin{peripherally parabolic (or unipotent)} if it is parabolic (resp. unipotent) as a twisted $\GL_2(A)$-local  system.
	
	A framing of a parabolic twisted symplectic local system is called \defin{isotropic} if the parallel subbundle defining the framing in a neighborhood of every puncture is isotropic with respect to the field of the form $\omega$. A decoration $((v_p)_{p\in P},(w_p)_{p\in P})$ of a unipotent twisted symplectic local system is called \defin{symplectic} if $\omega(v_p,v_p)=0$ and $\omega(v_p,w_p)=1$.
	
	\begin{rem} 
	    Notice, that if $\omega(v_p,v_p)=0$, then the expression $\omega(v_p,w_p)$ is well-defined. Indeed, let $\tilde w_p$ and $\tilde w'_p$ be two lifts of $w_p$ to $A^2$. Then $\tilde w'_p=\tilde w_p+v_p a$ for some $a\in A$. Further, 
	    $$\omega(v_p,\tilde w'_p)=\omega(v_p,\tilde w_p+v_pa)=\omega(v_p,\tilde w_p)=:\omega(v_p,w_p).$$
	    It is always enough to choose $v_p$ for every $p\in P$. Then $w_p$ becomes uniquely defined.
	\end{rem}
	
	A \defin{framed twisted symplectic local system} is a peripherally parabolic twisted symplectic local system with an isotropic framing. A \defin{decorated twisted symplectic local system} is a peripherally unipotent twisted symplectic local system with a symplectic decoration.
	
	\begin{rem}
		Notice, that since $\omega$ is a parallel form of even degree, the parallel transport of $\omega$ around the fiber of $T'S$ is trivial.
	\end{rem}
	
	Let $\pi\colon\Sigma\to S$ be the ramified two-fold covering as before. Let $\mathcal{E}\to T'\Sigma$ be an $A^\times$-local system over the spectral covering $\Sigma$ of $S$ that is obtained by the partial abelianization procedure.
	
	Let $\theta\colon\Sigma\to\Sigma$ be the covering involution. Slightly abusing the notation, we also denote $\theta=\theta_*\colon T'\Sigma\to T'\Sigma$.
	
	\begin{rem}
		Notice that $\theta$ does not have fixed points in $T'\Sigma$.
	\end{rem}
	
	We consider the pull-back of $\mathcal{E}$ with respect to $\theta$ and denote it by $\mathcal{E}':=\theta^*\mathcal{E}$.
	To simplify the notation, we will identify $\mathcal{E}'_p$ and $\mathcal{E}_{\theta(p)}$ for all $p\in\Sigma$.
	%We denote by $\theta_*$ the induced bundle map $\theta_*:\mathcal{E}'\to \mathcal{E}$.
	We denote by $P_\gamma\colon \mathcal{E}_{\gamma(0)}\to \mathcal{E}_{\gamma(1)}$, $P'_\gamma=P_{\theta\circ\gamma}\colon \mathcal{E}_{\theta(\gamma(0))}\to \mathcal{E}_{\theta(\gamma(1))}$ the parallel transport along $\gamma\colon[0,1]\to\Sigma$ in $\mathcal{E}$ and $\mathcal{E}'$. We denote by $P^S_\alpha\colon V_{\alpha(0)}\to V_{\alpha(1)}$ the parallel transport along $\alpha\colon[0,1]\to S$ in $\mathcal L$.
	
	\begin{df}
		Let $V$ and $V'$ be two right $A$-modules. A map $b\colon V\times V'\to A$ is called an \defin{$A$-sesquilinear pairing} between $V$ and $V'$ if it is additive in every argument and if for all $v\in V$, $v'\in V'$, and for all $a,a'\in A$, $b(va,v'a')=\sigma(a)b(v,v')a$. An $A$-sesquilinear paring $b$ is \defin{non-degenerate} if for every regular $v\in V$ there exists $v'\in V'$ such that $b(v,v')\in A^\times$ and for every regular $v'\in V'$ there exists $v\in V$ such that $b(v,v')\in A^\times$.
	\end{df}
	
	We denote by $B(\mathcal{E},\mathcal{E}')\to T'\Sigma$ the vector bundle of all $A$-sesquilinear parings between $\mathcal{E}$ and $\mathcal{E}'$. A section $\beta\in\Gamma(T'\Sigma,B(\mathcal{E},\mathcal{E}'))$ is called \defin{parallel} if $$\beta_{\gamma(0)}(x,y)=\beta_{\gamma(1)}(P_\gamma(x),P'_\gamma(y))=\beta_{\gamma(1)}(P_\gamma(x),P_{\theta\circ\gamma}(y))$$
	for every $\gamma\colon[0,1] \to T'\Sigma$ and for every $x\in \mathcal{E}_{\gamma(0)}$, $y\in \mathcal{E}'_{\gamma(0)}=\mathcal{E}_{\theta(\gamma(0))}$.
	
	\begin{rem}
		Notice that if $\beta\in\Gamma(T'\Sigma,B(\mathcal{E},\mathcal{E}'))$ is parallel and $\beta_p$ is non-degenerate for one $p\in T'\Sigma$, then $\beta_p$ is non-degenerate for all $p\in T'\Sigma$.
	\end{rem}
	
	\begin{teo}\label{symplectic}
		The framed local system $\mathcal L$ is an $\Sp_2(A,\sigma)$-local system if and only if there exists a non-degenerate parallel section $\beta\in\Gamma(T'\Sigma,B(\mathcal{E},\mathcal{E}'))$ such that $\beta_{p}(x,y)=-\sigma(\beta_{\theta(p)}(y,x))$ for every $p\in T'\Sigma$, for every $x\in \mathcal{E}_p$ and for every $y\in \mathcal{E}_{\theta(p)}$.
	\end{teo}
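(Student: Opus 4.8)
The plan is to read $\beta$ off from $\omega$ (and conversely) using the local structure of the abelianization from Section~\ref{sec:framed}. Over the interior of each cell $c$ of the spectral network $\mathcal W$ — whose closure meets exactly one sink puncture and one source puncture of $\Sigma$, lying over two distinct, hence transverse, punctures of $S$ — the construction of $\mathcal E$ provides canonical \emph{parallel} identifications of $\mathcal E|_c$ with the framing line $L$ of $\pi^*\mathcal L$ attached to the sink and of $\mathcal E'|_c=(\theta^*\mathcal E)|_c$ with the framing line $L'$ attached to the source (equivalently $\mathcal E|_c$ with $\pi^*(\mathcal L/L')$ via transversality), so that $\pi^*\mathcal L|_c\cong\mathcal E|_c\oplus\mathcal E'|_c$ as flat $A$-bundles, both summands being isotropic whenever the framing is isotropic. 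I would first record this dictionary, together with how it changes across a ray of $\mathcal W$ (exactly one of the two summands is unchanged, the other being re-glued by the linear projection between the two framing lines of the two adjacent cells) and around a branch point (governed by Lemma~\ref{lem:triang_rel}).

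For the ``only if'' direction, given a parallel field $\omega$ of the standard symplectic form with the framing isotropic, I would define $\beta$ cell by cell by $\beta_z(x,y):=\omega_{\pi(z)}(x,y)$ for $z\in T'c$, $x\in\mathcal E_z$, $y\in\mathcal E'_z$, identifying $x,y$ with elements of $\mathcal L_{\pi(z)}$ via the two inclusions above. Sesquilinearity and non-degeneracy of $\beta_z$ come from those of $\omega$ together with the fact that the two framing lines are complementary isotropics, on which $\omega$ restricts to a perfect pairing; the required symmetry $\beta_p(x,y)=-\sigma(\beta_{\theta(p)}(y,x))$ is the antisymmetry $\sigma(\omega(u,v))=-\omega(v,u)$ of the standard form combined with the fact that $\theta$ exchanges the cells $c$ and $\theta(c)$ and hence the roles of their two framing lines. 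The only real content is that this recipe is consistent and yields a \emph{parallel} global section: over a cell it is parallel because $\omega$, the framing lines and $\mathcal E,\mathcal E'$ all are; across a ray the re-gluing of the moving summand is a projection along the other, isotropic, framing line $L$, and since $\omega(v',w)=\omega(v,w)$ whenever $v-v'\in L$, $w\in L$ and $L$ is isotropic, the value of $\beta$ is unchanged; the closing-up condition around a branch point is the triangle relation of Lemma~\ref{lem:triang_rel}, cf. the Kashiwara--Maslov computation in Section~\ref{sec:Kashiwara}.

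For the ``if'' direction, given such a $\beta$, I would define over each cell $c$ a form on $\pi^*\mathcal L|_c\cong\mathcal E|_c\oplus\mathcal E'|_c$ by making both summands isotropic and setting $\omega(x\oplus y,\,x'\oplus y'):=\beta_z(x,y')-\sigma(\beta_z(x',y))$; the symmetry of $\beta$ makes this the unique $\sigma$-sesquilinear form with $\sigma(\omega(u,v))=-\omega(v,u)$ restricting to $\beta$ between the summands, and non-degeneracy of $\beta$ gives that of $\omega$. The cells $c$ and $\theta(c)$ project to the same region of $S$ with their two framing lines exchanged, and the agreement of the two forms there is again precisely the symmetry of $\beta$; two $\mathcal W$-adjacent cells are handled by running the across-a-ray computation of the previous paragraph backwards; branch points by Lemma~\ref{lem:triang_rel}. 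Hence the cell-wise forms glue to a parallel, non-degenerate field of the standard symplectic form on $\mathcal L$ over $S$ for which the given framing is isotropic by construction, so $(\mathcal L,\text{framing})$ is a framed twisted $\Sp_2(A,\sigma)$-local system; by construction the assignments $\omega\leftrightarrow\beta$ are mutually inverse.

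The main obstacle is not the linear algebra but the compatibility bookkeeping: one must set up the dictionary $\omega\leftrightarrow\beta$ precisely and verify that the ``jumps'' of the identification $\pi^*\mathcal L\cong\mathcal E\oplus\mathcal E'$ produced by the spectral-network lifting rules of Section~\ref{sec:tw_ab} differ from the identity only by isotropic-direction terms, so that they are invisible to $\omega$ (resp. $\beta$); carrying the twisted structure on $T'S$ and $T'\Sigma$ (the $-\Id$ monodromy around the fibers and the sign conventions for $s^{\pm}$ and $\delta^{\pm}$) correctly through all of this is the attendant labor.
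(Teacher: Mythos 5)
Your plan coincides with the paper's proof in both structure and substance: $\beta$ and $\omega$ are exchanged via the isotropic splitting $\pi^{*}\mathcal L\cong\mathcal E\oplus\mathcal E'$ coming from the abelianization, sesquilinearity, non-degeneracy and the relation $\beta_{p}(x,y)=-\sigma(\beta_{\theta(p)}(y,x))$ are read off from the corresponding properties of $\omega$, and the forward ray-crossing works exactly as you say: the spectral-network correction to $P^S_\alpha(\pi_*(x))$ lies in the same isotropic line $\pi_*(\mathcal E_{\theta(\gamma(1))})$ as the second argument, so it is invisible to $\omega$.

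The one step your sketch mislabels is the converse ray-crossing, which is not the forward computation ``run backwards''. When $\omega$ is reconstructed from $\beta$ and evaluated on general elements $x=\pi_*(x_1)+\pi_*(x_2)$, $y=\pi_*(y_1)+\pi_*(y_2)$, expanding $\omega(P^S_\alpha x,P^S_\alpha y)$ produces two cross terms, $\beta(P_{\alpha_1}x_1,P_{\alpha_3}y_1)$ and $\beta(P_{\alpha_3}x_1,P_{\alpha_1}y_1)$ (with $\alpha_3$ the extra lift along the ray), which no isotropy argument kills individually; they cancel against each other because $\theta\circ(\alpha_3.\bar\alpha_1).\alpha_3.\bar\alpha_1$ is homotopic to the fiber of $T'\Sigma\to\Sigma$, so $P_{\theta\circ\bar\alpha_1.\alpha_3}=-P_{\theta\circ\bar\alpha_3.\alpha_1}$ and the combined term $\beta\bigl(x_1,(P_{\theta\circ\bar\alpha_3.\alpha_1}+P_{\theta\circ\bar\alpha_1.\alpha_3})(y_1)\bigr)$ vanishes. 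So the $-\Id$ fiber monodromy is not mere sign bookkeeping here but the mechanism that closes the converse direction; with that supplied, your argument is the paper's.
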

	
	\begin{proof} $(\Rightarrow)$
		Assume, $\mathcal L$ is an $\Sp_2(A,\sigma)$-local system. That means, there exists a field of standard symplectic forms $\omega$ on $\mathcal L\to T'S$, such that for every $\alpha\colon [0,1]\to T'S$ and for every $v,w\in V_{\alpha(0)}$,
		$$\omega_{\alpha(0)}(v,w)=\omega_{\alpha(1)}(P^S_\alpha(v),P^S_\alpha(w)).$$
		
		Let $\gamma\colon [0,1]\to T'\Sigma$ be a smooth path such that $\gamma(0),\gamma(1)$ do not project to points on lines of the spectral network on $\Sigma$, and $x\in \mathcal{E}_{\gamma(0)}$ and $y\in \mathcal{E}_{\theta(\gamma(0))}$ regular elements. We consider $\gamma'=\theta\circ\gamma$ and $\alpha=\pi\circ\gamma=\pi\circ\gamma'$.  Moreover, $(\pi_*(x),\pi_*(y))$ is an isotropic basis of $\mathcal L_{\alpha(0)}$. We can define
		$$\beta_{\gamma(0)}(x,y):=\omega_{\alpha(0)}(\pi_*(x),\pi_*(y)).$$
		Since $\omega$ is non-degenerate and skew-Hermitian, $\beta$ is non-degenerate and sesquilinear pairing. Moreover, $\beta_{\gamma(0)}(x,y)=-\sigma(\beta_{\theta(\gamma(0))}(y,x))$ because $\omega_{\alpha(0)}(\pi_*(x),\pi_*(y))=-\sigma(\omega_{\alpha(0)}(\pi_*(y),\pi_*(x)))$.
		
		If $\gamma$ does not intersect lines of the spectral network, then $\beta$ along $\gamma$ is parallel because in this case $P^S_{\alpha}=P_{\gamma}\oplus P_{\sigma\circ\gamma}$
		
		If $\gamma$ is a small segment intersecting a line of spectral network, then
		$$\omega_{\alpha(1)}(P^S_\alpha(\pi_*(x)),P^S_\alpha(\pi_*(y)))= \omega_{\alpha(1)}(\pi_*(P_\gamma(x))+\pi_*(P_{\tilde\gamma}(x)),\pi_*(P_{\theta\circ\gamma}(y)))$$
		where $\tilde\gamma$ is a lift of $\alpha$ going along a line of spectral network from $\gamma(0)$ to $\theta(\gamma(1))$.
		But elements $P_{\tilde\gamma}(x),P_{\theta\circ\gamma}(y)\in \mathcal{E}_{\theta(\gamma(1))}$, therefore, $\omega(\pi_*(P_{\tilde\gamma}(x)),\pi_*(P_{\theta\circ\gamma}(y)))=0$. So
		\begin{align*}
			\beta_{\gamma(0)}(x,y) & =\omega_{\alpha(0)}(\pi_*(x),\pi_*(y))\\
			& =\omega_{\alpha(1)}(P^S_\alpha(\pi_*(x)),P^S_\alpha(\pi_*(y)))\\
			& =\omega_{\alpha(1)}(\pi_*(P_\gamma(x)),\pi_*(P_{\theta\circ\gamma}(y)))\\
			& =\beta_{\alpha(1)}(P_\gamma(x),P_{\theta\circ\gamma}(y)),
		\end{align*}
		i.e. $\beta$ is parallel and extends also along lines of the spectral network on $\Sigma$.
		
		Finally, let $p\in T'\Sigma$. Let $x\in \mathcal{E}_p$ and $y\in \mathcal{E}_{\theta(p)}$ regular elements. Then $(\pi_*(x),\pi_*(y))$ is an isotropic basis of $\mathcal L_{\pi(p)}$, i.e. $\beta(x,y)=\omega(\pi_*(x),\pi_*(y))\in A^\times$. So the pairing $\beta$ is non-degenerate.
		
		$(\Leftarrow)$ Assume, there exists a non-degenerate parallel sesquilinear pairing $\beta$. Let $p\in T'\Sigma$ that does not project to a point on a line of the spectral network on $\Sigma$. We define for every $x\in \mathcal{E}_p, y\in \mathcal{E}_{\theta(p)}$:
		$$\omega_{\pi(p)}(\pi_*(x),\pi_*(y)):=\beta_p(x,y).$$
		Because $(\pi_*(x),\pi_*(y))$ is a basis of $V_{\pi(p)}$, $\omega$ extends by sesquilinearity on $V_{\pi(p)}$ if we assume $$\omega_{\pi(p)}(\pi_*(x),\pi_*(x'))=\omega_{\pi(p)}(\pi_*(y),\pi_*(y;))=0$$
		for all $x,x'\in \mathcal{E}_p$ and $y,y'\in \mathcal{E}_{\theta(p)}$. Since $\beta$ is non-degenerate, $\omega$ is non-degenerate as well.

		Since $\beta_{p}(x,y)=-\sigma(\beta_{\theta(p)}(y,x))$, we get
		$$\omega_{\pi(p)}(\pi_*(y),\pi_*(x))=\beta_{\theta(p)}(y,x)=-\sigma(\beta_p(x,y))=-\sigma(\omega_{\pi(p)}(\pi_*(x),\pi_*(y))).$$
		
		Further, $\omega$ is parallel. Indeed, let $\alpha\colon [0,1]\to T'S$ be a path such that the projections of $\alpha(0)$ and $\alpha(1)$ to $S$ are not on the lines of the spectral network. Let $x,y\in \mathcal L_{\alpha(0)}$. Let $\alpha_1$, $\alpha_2:=\theta\circ\alpha_1$ are two standard lifts of $\alpha$ to $T^1\Sigma$. Then $x=\pi_*(x_1)+\pi_*(x_2)$ and $y=\pi_*(y_1)+\pi_*(y_2)$ where $x_1,y_1\in \mathcal{E}_{\alpha_1(0)}$ and $x_2,y_2\in \mathcal{E}_{\alpha_2(0)}$. If the projection of $\alpha$ to $\Sigma$ does not intersect the spectral network, then the projection $T'\Sigma\to T'S$ and the parallel transport along $\alpha$ and $\alpha_1$, $\alpha_2$ commute. So $\omega$ is parallel because $\beta$ is parallel.
		
		Assume now that the projection of $\alpha$ intersects the spectral network once. We denote by $\alpha_3$ the additional lift of $\alpha$ along the spectral network. Without loss of generality, assume $\alpha_3(0)=\alpha_1(0)$ and $\alpha_3(1)=\alpha_2(1)$. Notice that the path $\theta\circ(\alpha_3.\bar \alpha_1).\alpha_3.\bar \alpha_1$ is homotopic to the fiber of $T'\Sigma\to \Sigma$. Therefore, $P_{\theta\circ\bar\alpha_1.\alpha_3}=-P_{\theta\circ\bar\alpha_3.\alpha_1}$. Therefore,
		\begin{align*}
			\omega_{\alpha(1)}(P^S_\alpha(x),P^S_\alpha(y))& =  \omega_{\alpha(1)}(P^S_\alpha(x),P^S_\alpha(y))\\
			& =  \omega_{\alpha(1)}(P^S_\alpha(\pi_*(x_1))+P^S_\alpha(\pi_*(x_2)),P^S_\alpha(\pi_*(y_1))+P^S_\alpha(\pi_*(y_2)))\\
			& =  \omega_{\alpha(1)}(\pi_*(P_{\alpha_1}(x_1)+P_{\alpha_3}(x_1)+P_{\alpha_2}(x_2)),\pi_*(P_{\alpha_1}(y_1)+P_{\alpha_3}(y_1)+P_{\alpha_2}(y_2)))\\
			& =  \omega_{\alpha(1)}(\pi_*(P_{\alpha_1}(x_1),\pi_*(P_{\alpha_3}(y_1)+P_{\alpha_2}(y_2))))\\
			& +\omega_{\alpha(1)}(\pi_*(P_{\alpha_3}(x_1) + P_{\alpha_2}(x_2)),\pi_*(P_{\alpha_1}(y_1)))\\
			& =  \beta_{\alpha_1(1)}(P_{\alpha_1}(x_1),P_{\alpha_3}(y_1)+P_{\alpha_2}(y_2))+\beta_{\alpha_2(1)}(P_{\alpha_3}(x_1)\\
			& + P_{\alpha_2}(x_2),P_{\alpha_1}(y_1))\\
			& =  \beta_{\alpha_1(1)}(P_{\alpha_1}(x_1),P_{\alpha_3}(y_1))+\beta_{\alpha_1(1)}(P_{\alpha_1}(x_1),P_{\alpha_2}(y_2))\\
			& + \beta_{\alpha_2(1)}(P_{\alpha_3}(x_1),P_{\alpha_1}(y_1)) + \beta_{\alpha_2(1)}(P_{\alpha_2}(x_2),P_{\alpha_1}(y_1))\\
			& =  \beta_{\alpha_1(1)}(P_{\alpha_1}(x_1),P_{\alpha_2}(y_2))+\beta_{\alpha_2(1)}(P_{\alpha_2}(x_2),P_{\alpha_1}(y_1))\\
			& + \beta_{\alpha_1(0)}(x_1,P_{(\theta\circ\bar\alpha_3).\alpha_1}(y_1) +P_{(\theta\circ\bar\alpha_1).\alpha_3}(y_1))\\
			& =  \beta_{\alpha_1(1)}(P_{\alpha_1}(x_1),P_{\alpha_2}(y_2))+\beta_{\alpha_2(1)}(P_{\alpha_2}(x_2),P_{\alpha_1}(y_1))\\
			& + \beta_{\alpha_1(0)}(x_1,P_{\theta\circ\bar\alpha_3.\alpha_1}(y_1) +P_{\theta\circ\bar\alpha_1.\alpha_3}(y_1))\\
			& =  \beta_{\alpha_1(1)}(P_{\alpha_1}(x_1),P_{\alpha_2}(y_2))+\beta_{\alpha_2(1)}(P_{\alpha_2}(x_2),P_{\alpha_1}(y_1))\\
			& =  \beta_{\alpha_1(0)}(x_1,y_2)+\beta_{\alpha_2(0)}(x_2,y_1)\\
			& =  \omega_{\alpha(0)}(x,y).
		\end{align*}
		So $\omega$ is parallel and extends also along lines of the spectral network on $S$.
		
		Finally, let $p\in\Sigma$ and $x\in \mathcal{E}_p$, $y\in \mathcal{E}_{\theta(p)}$ such that $\beta_{p}(x,y)=1$, then $\omega(\pi_*(x),\pi_*(y))=1$. So $\omega$ is a field of standard symplectic forms.
	\end{proof}
	
	\subsection{Topology of the moduli space of framed twisted symplectic local systems}
	
	We keep the same notations as in Proposition~\ref{prop:topology_Sigma}. Our goal in this section is to prove the following theorem:
	
	\begin{teo}\label{sympectic_local_systems}
		The moduli space of framed (twisted) $\Sp_2(A,\sigma)$-local systems on $S$ that are transverse with respect to a fixed triangulation $\T$ is homeomorphic to:
		$$\left(((A^\sigma)^\times)^{-2\chi(\bar S)+2p-1+\sum n_i}\times (A^\times)^{1-\chi(\bar S)+p}\right)/A^\times$$
		where the group $A^\times$ acts componentwisely by conjugation on $(A^\times)^{1-\chi(\bar S)+p}$ and by congruence on $((A^\sigma)^\times)^{-2\chi(\bar S)+2p-1+\sum n_i}$.
	\end{teo}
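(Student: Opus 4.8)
The strategy is to feed the criterion of Theorem~\ref{symplectic} into the $\GL_2(A)$ parametrization and then count the resulting parameters against the $\theta$-equivariant free generating set of $\pi_1(\Sigma)$ supplied by Corollary~\ref{counting}. The starting point is the equivalence of data: by the abelianization bijection of Section~\ref{sec:framed}, a framed $\T$-transverse twisted $\GL_2(A)$-local system on $S$ is the same as a twisted $A^\times$-local system $\mathcal{E}$ on $\Sigma$, and by Theorem~\ref{symplectic} such an $\mathcal{E}$ underlies a framed twisted $\Sp_2(A,\sigma)$-local system precisely when it carries a non-degenerate parallel section $\beta\in\Gamma(T'\Sigma,B(\mathcal{E},\theta^*\mathcal{E}))$ with $\beta_p(x,y)=-\sigma(\beta_{\theta(p)}(y,x))$; moreover this $\Sp_2(A,\sigma)$-local system together with its framing is determined by, and determines, the pair $(\mathcal{E},\beta)$ up to isomorphism. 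So the moduli space to be described is the set of such pairs $(\mathcal{E},\beta)$ modulo isomorphism.

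The next step is to translate this into holonomies, using a ramification point $b$ (which, being a ramification point, is fixed by the covering involution, so that $\theta$ acts on $\pi_1(\Sigma,b)$) as base point and the free generating set of Corollary~\ref{counting}: the $2n$ loops $\gamma_i^1$, $\gamma_i^2=\theta\circ\gamma_i^1$ with $n=1-\chi(\bar S)+p$, which $\theta$ interchanges in pairs, together with the $m$ loops $\xi_j$, with $m=-2\chi(\bar S)+2p-1+\sum n_i$ (one for each ramification point other than $b$), each of which $\theta$ sends to its inverse. Then $\mathcal{E}$ becomes a homomorphism $\rho\colon\pi_1(\Sigma,b)\to A^\times$ and $\beta$, being parallel, is recorded by its single value $\epsilon\in A^\times$ at the base point, parallelism of $\beta$ being equivalent to the identity $\sigma(\rho(\gamma))\,\epsilon\,\rho(\theta_*\gamma)=\epsilon$ for every loop $\gamma$ (which, by a cocycle computation, need only be checked on generators). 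Evaluating on the two kinds of generators: one gets $\rho(\gamma_i^2)=\epsilon^{-1}\sigma(\rho(\gamma_i^1))^{-1}\epsilon$, so the loops $\gamma_i^2$ are redundant and $\rho(\gamma_1^1),\dots,\rho(\gamma_n^1)\in A^\times$ are free; and, since $\theta_*\xi_j=\xi_j^{-1}$, one gets $\sigma(\rho(\xi_j))\,\epsilon=\epsilon\,\rho(\xi_j)$, so that $c_j:=\epsilon\,\rho(\xi_j)$ is $\sigma$-symmetric and invertible, $c_j\in(A^\sigma)^\times$, with $\rho(\xi_j)=\epsilon^{-1}c_j$. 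Thus $(\mathcal{E},\beta)$ is completely encoded by the tuple $\bigl((c_j)_{j=1}^m,(\rho(\gamma_i^1))_{i=1}^n,\epsilon\bigr)$, and conversely any such tuple (with $\epsilon$ of the symmetry type forced at $b$) produces, via non-abelianization and Theorem~\ref{symplectic}, a framed twisted symplectic local system.

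It then remains to quotient by isomorphisms. Changing the chosen frame of $\mathcal{E}$ over $b$ by $u\in A^\times$ replaces $\rho$ by its conjugate by $u^{-1}$ and $\epsilon$ by $\sigma(u)\epsilon u$; hence it conjugates the $\rho(\gamma_i^1)$, acts by congruence on the $c_j$, and acts by congruence on $\epsilon$. The skew-Hermitian condition at the base point confines $\epsilon$ to a single normalization class on which the residual gauge still acts, so $\epsilon$ contributes no modulus and exactly one copy of $A^\times$ survives, acting by conjugation on $(A^\times)^n$ and by congruence on $((A^\sigma)^\times)^m$. Reading off $m$ and $n$ from Corollary~\ref{counting} and Proposition~\ref{prop:topology_Sigma} and checking that the maps in both directions are given by rational formulas in the entries, one obtains the asserted homeomorphism with $\bigl(((A^\sigma)^\times)^{m}\times(A^\times)^{n}\bigr)/A^\times$.

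The hard part will be the bookkeeping in the last two paragraphs: handling the $-1$-twisting of $T'\Sigma$ correctly — equivalently, moving carefully between $\pi^s_1$ and $\pi_1$ — so that the symmetry of $c_j$ really lands in $A^\sigma$ rather than $A^{-\sigma}$ and the skew-Hermitian sign of $\beta$ is absorbed in the process, and pinning down precisely how the auxiliary parameter $\epsilon$ is normalized so that the quotient is by a single copy of $A^\times$. Everything else — the equivalence of data, the redundancy of the $\gamma_i^2$, the passage to holonomies, and the exponent count — is a direct consequence of results already established in the paper.
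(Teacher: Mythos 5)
Your strategy is the paper's own: reduce via Theorem~\ref{symplectic} to pairs $(\mathcal{E},\beta)$, pass to holonomies on the generating set of Corollary~\ref{counting} based at a lift $\tilde b$ of a ramification point, note that the $\gamma_i^2$ are determined by the $\gamma_i^1$ together with the base-point value of $\beta$, and that the $\xi_j$ are constrained to yield elements of $(A^\sigma)^\times$. The genuine error is the final claim that ``$\epsilon$ contributes no modulus.'' The fibers $\mathcal{E}_{\tilde b}$ and $\mathcal{E}_{\theta(\tilde b)}$ are not independent: since $\theta$ has no fixed point in $T'\Sigma$, they sit over distinct points, but they are linked by the parallel transport $P_{s^+}$ along the fiber of $T'\Sigma\to\Sigma$, which is part of the data of $\mathcal{E}$ and not a gauge choice. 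Accordingly the paper sets $a_0:=\beta_{\tilde b}(x,P_{s^+}(x))$, shows $a_0\in(A^\sigma)^\times$ using skew-Hermitianity together with $P_{s^+}\circ P_{s^+}=-\Id$, and keeps $a_0$ as a modulus transforming by congruence under the single residual gauge $A^\times$. It cannot be normalized away: its congruence class is the Kashiwara--Maslov invariant of the triangle containing $\pi(b)$, and the subsequent maximality theorem constrains exactly this class. Concretely, for $A=\R$, $\sigma=\Id$ and $S$ a triangle (so $s=m=0$), your count gives a single point, whereas there are two isomorphism classes of transverse framed $\SL_2(\R)$-local systems, distinguished by the sign of $a_0\in\R^\times/\{u^2\}$. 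If you instead normalize $\epsilon=1$ by choosing the frame at $\theta(\tilde b)$ freely, the lost information reappears as the matrix of $P_{s^+}$ in the two frames, which is then an extra element of $(A^\sigma)^\times$; either way one factor of $(A^\sigma)^\times$ survives for each ramification point, including the base one.

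Two further remarks. First, your assertion that $c_j=\epsilon\rho(\xi_j)$ is $\sigma$-symmetric already presupposes $\sigma(\epsilon)=\epsilon$ (from $\sigma(\rho(\xi_j))\epsilon=\epsilon\rho(\xi_j)$ one gets $\sigma(c_j)=\sigma(\rho(\xi_j))\sigma(\epsilon)$, which equals $c_j$ only if $\sigma(\epsilon)=\epsilon$); this symmetry is exactly what the normalization $y=P_{s^+}(x)$ and the twisting provide, so the bookkeeping you defer to the end is not optional but is where the $A^\sigma$-valuedness is actually established. Second, be aware that your final exponent coincides with the theorem as literally displayed, but the paper's own proof (which records $a_0$ \emph{and} the $a_0a_\xi$) and the version of the statement in the introduction both give $-2\chi(\bar S)+2p+\sum n_i$ factors of $(A^\sigma)^\times$ --- one per triangle of $\T$ --- so the displayed exponent appears to be off by one; in any case your normalization argument does not legitimately derive it.
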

	
	\begin{proof}
		We use the 1:1-correspondence between framed twisted $\Sp_2(A,\sigma)$-local systems on $S$ that are transverse to a fixed triangulation $\T$ and twisted $A^\times$-local systems on $\Sigma$ equipped with a non-degenerate parallel pairing $\beta$ as in Theorem~\ref{symplectic}.
		
		Let $\tilde b\in T'\Sigma$ such that it projects to a ramification point $b\in\Sigma$. Let $\alpha_1,\dots,\alpha_s\colon [0,1]\to S$ are free generators of the fundamental group $\pi_1(S,\pi(b))$. Let $\gamma_i^1,\gamma_i^2$ are closed lifts of $\alpha_i$ to $T'\Sigma$ such that $\theta\circ\gamma_i^1=\gamma_i^2$ and $\gamma_i^1$ is based at $\tilde b$. Notice, that then $\gamma_i^2$ is based at $\theta(\tilde b)$.
		
		Let $s^+_{\tilde b}$ be as before the path from $\tilde b$ to $\theta(\tilde b)$ going along the fiber at $b$ in the positive direction and $s^-_{\theta(\tilde b)}:=\overline{s^+_{\tilde b}}$ the path from $\theta(\tilde b)$ to $\tilde b$ going along the fiber at $b$ in the negative direction. If the context is clear, we just write $s^+$ or $s^-$ to simplify the notation. %We denote $\gamma_i^3:=s^-.\gamma_i^2.s^+$.
		
		Let $x\in \mathcal{E}_{\tilde b}$. Then on one hand:
		$\beta_{\tilde b}(x,P_{s^+}(x))=-\sigma(\beta_{\theta(\tilde b)}(P_{s^+}(x),x)).$
		On the other hand, since $\beta$ is parallel:
		\begin{align*}
			\beta_{\tilde b}(x,P_{s^+}(x)) =\;& \beta_{\theta(\tilde b)}(P_{s^+}(x),P_{s^+}(P_{s^+}(x)))\\
			=\;& \beta_{\theta(\tilde b)}(P_{s^+}(x),-x)\\
			=\;& -\beta_{\theta(\tilde b)}(P_{s^+}(x),x).
		\end{align*}
		So we obtain:
		$$\beta_{\tilde b}(x,P_{s^+}(x))=-\beta_{\theta(\tilde b)}(P_{s^+}(x),x)=-\sigma(\beta_{\theta(\tilde b)}(P_{s^+}(x),x))=:a_0\in A^\sigma.$$
		Let now $\gamma$ be a loop based at $\tilde b$ and
		$$a_0=\beta_{\tilde b}(x,P_{s^+}(x))=\beta_{\tilde b}(P_\gamma(x),P_{\theta\circ\gamma}P_{s^+}(x)).$$
		For every $x\in \mathcal{E}_{\tilde b}$, $P_\gamma(x)=xa_\gamma$ where $a_\gamma\in A^\times$. Let $P_{\theta\circ\gamma}P_{s^+}(x)=P_{s^+}(x)a'_\gamma$ for $a'_\gamma\in A^\times$. Then
		$$a_0=\sigma(a_\gamma)\beta_{\tilde b}(x,P_{s^+}(x))a'_\gamma=\sigma(a_\gamma)a_0 a'_\gamma,$$
		$$a'_\gamma=a_0^{-1}\sigma(a_\gamma^{-1})a_0.$$
		
		Let $\gamma$ and $s^-.(\theta\circ\bar\gamma).s^+$ are different generators of $\pi_1(T'\Sigma,\tilde b)$ (this corresponds to curves $\gamma_i^1$ and $\gamma_i^2$ of Lemma~\ref{counting} case~(1) lifted to $T'\Sigma$). In particular, they are not homotopic. Then $a_\gamma$ and $a_0$ determine uniquely $a'_\gamma$.
		
		Let $\gamma\colon[0,1]\to T'\Sigma$ and $\theta\circ\gamma\colon[0,1]\to T'\Sigma$ are two lifts to $T'\Sigma$ of a segment in $S$ connecting $\pi(b)$ and $\pi(b')$ where $b'$ is another ramification point on $\Sigma$. Let $\tilde b:=\gamma(0)$ and $\tilde b':=\gamma(1)$. In this case, $\xi_{\tilde b'}:=\xi:=s^-_{\theta(\tilde b)}.\theta(\bar\gamma).s^+_{\tilde b'}.\gamma$ and $s^-.(\theta\circ\bar\xi).s^+$ are homotopic in $T'\Sigma$. Therefore, $a_\xi=a_0^{-1}\sigma(a_\xi)a_0$, i.e. $a_0a_\xi\in A^\sigma$. Moreover, an easy calculation shows that $a_0a_\xi=\beta_{\tilde b'}(y,P_{s^+_{\tilde b'}}y)$ where $y=P_\gamma(x)$.
		
		So the symplectic local system provides us elements $a_i\in A^\times$ corresponding to $P_{\gamma_i^1}$, $a_0\in A^\sigma$ and $a_0a_\xi\in A^\sigma$ for every $\xi$ as in~(2) of Lemma~\ref{counting} (lifted to $T'\Sigma$). These elements are well-defined up to a common conjugation of all $a_i$ and common congruence of all $a_0$ and $a_0a_\xi$ by an element of $A^\times$.
		
		Conversely, if elements $a_i$, $a_0$, $a_\xi$ as above are given, then a twisted $A^\times$-local systems on $\Sigma$ equipped with a non-degenerate parallel pairing $\beta$ can be reconstructed uniquely. Equivalent local system correspond to a common conjugation of all $a_i$ and common congruence of all $a_0$ and $a_0a_\xi$ by an element of $A^\times$.
	\end{proof}
	
	\subsection{Symplectic local system over Hermitian algebras}
	
	Let $A$ be a Hermitian algebra. Let $\ell_1$, $\ell_2$, $\ell_3$ be pairwise transverse isotropic $A$-lines. The Kashiwara-Maslov index of the triple $(\ell_1,\ell_2,\ell_3)$ is the signature of the element $\omega(x,\mu_1^{23}(x))\in (A^\sigma)^\times$ for a regular $x\in \ell_1$ where $\mu_1^{23}$ is the Kashiwara-Maslov map defined in Section~\ref{sec:Kashiwara}. In fact, this signature does not depend on $x\in \ell_1$, and it is invariant under cyclic permutations of the triple $(\ell_1,\ell_2,\ell_3)$ and it changes the sign by transposition of the elements of the triple.
	
	Let $\tau\subset S$ be a triangle of the triangulation $\mathcal T$ that is incident to punctures $p_1$, $p_2$, $p_3$ and the orientation of the triangle agrees with the orientation of the triple $(p_1,p_2,p_3)$. As in Section~\ref{sec:Kashiwara}, let $L_i$ be a parallel isotropic $A$-subbundle of $\mathcal L\to T'\tau$ corresponding to the puncture $p_i$, $i\in\{1,2,3\}$. Let $H=\pi^{-1}(\tau)\subset\Sigma$ be the hexagon that covers $\tau$. Let $b$ be the ramification point in $H$, let $\tilde b$ be a lift of $b$ in $T'H$ and let $s^+$ be a path in $T'H$ going from $\tilde b$ to $\theta(\tilde b)$ along the fiber in the positive direction.
	
	The following proposition is immediate:
	\begin{prop}
		Let $z\in T'\tau$. The Kashiwara-Maslov index of $(L_1(z),L_2(z),L_3(z))$ agrees with the signature of the element $\beta_{\tilde b}(x,P_{s^+}(x))\in A^\sigma$ for a regular $x\in \mathcal{E}_{\tilde b}$.
	\end{prop}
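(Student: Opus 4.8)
The plan is to chain together three facts already in place: the Proposition of Section~\ref{sec:Kashiwara} (parallel transport along the distinguished path realizes the Kashiwara--Maslov map), Theorem~\ref{symplectic} (the pairing $\beta$ on $\mathcal E$ is precisely the field of symplectic forms $\omega$ pushed upstairs), and the congruence‑invariance of $\sgn$ on $A^\sigma$; in fact the route below produces an honest equality of elements of $A^\sigma$, not merely a congruence.

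First I would avoid the branch point and work at a generic $p\in T't_{21}$, i.e.\ one whose image in $S$ misses all lines of $\mathcal W$. The construction of $\mathcal E$ identifies $\mathcal E_p\simeq L_1(\pi(p))$ and $\mathcal E_{\theta(p)}\simeq\mathcal L/L_1(\pi(p))$, and under these identifications the Proposition of Section~\ref{sec:Kashiwara} says that $P_\gamma\colon\mathcal E_p\to\mathcal E_{\theta(p)}$ is the Kashiwara--Maslov map $\mu_1^{23}$, where $\gamma$ runs from $p$ to $\theta(p)$ and $(\theta\circ\gamma).\gamma$ loops once positively around the fiber. Since $L_1$ is isotropic, $\omega_{\pi(p)}(\pi_*(x),-)$ descends to $\mathcal L/L_1(\pi(p))$, and the $(\Rightarrow)$ direction of Theorem~\ref{symplectic}, applied at the off‑network point $p$, gives $\beta_p(x,y)=\omega_{\pi(p)}(\pi_*(x),\pi_*(y))$. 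For regular $x$ the vector $\pi_*(x)$ spans $L_1(\pi(p))$, so taking $y=P_\gamma(x)$ yields
\[\beta_p\big(x,P_\gamma(x)\big)=\omega_{\pi(p)}\big(\pi_*(x),\mu_1^{23}(\pi_*(x))\big),\]
whose signature is, by definition, the Kashiwara--Maslov index of $(L_1,L_2,L_3)$ at $\pi(p)$; since the $L_i$ are parallel over $T'\tau$ this equals the index of $(L_1(z),L_2(z),L_3(z))$ for every $z\in T'\tau$.

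Next I would transfer this from $p$ to $\tilde b$. Choose any path $\eta$ from $\tilde b$ to $p$ inside $T'H$ (possible since $T'H\cong H\times\SS^1$) and set $c:=(\theta\circ\eta).s^+.\bar\eta$, a path from $p$ to $\theta(p)$. As $(\theta\circ s^+).s^+$ is the positive fiber loop over $b$ and $\theta$ is orientation‑preserving on $\Sigma$, hence on the $\SS^1$‑fibers of $T'\Sigma\to\Sigma$, a short computation in $\pi_1(T'H)\cong\Z$ shows $(\theta\circ c).c$ is the positive fiber loop at $p$. Since $(\theta\circ(\gamma.\delta^+)).(\gamma.\delta^+)$ differs from $(\theta\circ\gamma).\gamma$ by $(\delta^+)^2$, this normalization pins down the homotopy class of a path $p\to\theta(p)$ uniquely, so $c\approx\gamma$ and $P_c=P_\gamma$. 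Feeding $x'$ and $P_{s^+}(x')$ into the parallelness relation for $\beta$ along $\eta$ then gives
\[\beta_{\tilde b}\big(x',P_{s^+}(x')\big)=\beta_p\big(P_\eta(x'),P_{\theta\circ\eta}P_{s^+}(x')\big)=\beta_p\big(x,P_c(x)\big)=\beta_p\big(x,P_\gamma(x)\big),\qquad x:=P_\eta(x'),\]
so the two elements agree exactly; both lie in $A^\sigma$ (they equal the element $a_0$ from the proof of Theorem~\ref{sympectic_local_systems}), hence have equal signature. Combining with the previous paragraph completes the proof.

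The step I expect to require the most care is the homotopy bookkeeping in the twisted setting: one must verify that the normalization ``$(\theta\circ\gamma).\gamma$ loops once positively around the fiber'' determines $\gamma$ exactly — so that $P_\gamma$ equals $P_c$ with no stray sign — which is precisely where orientation‑preservation of $\theta$ on the fibers of $T'\Sigma\to\Sigma$ is used. Everything else is a mechanical concatenation of identifications already established in Sections~\ref{sec:Kashiwara} and the proof of Theorem~\ref{symplectic}.
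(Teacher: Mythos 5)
Your argument is correct and is exactly the chain of identifications the paper has in mind when it declares the proposition ``immediate'' (the paper gives no written proof): the Kashiwara--Maslov proposition of Section~4.1 identifies $P_\gamma$ with $\mu_1^{23}$, the $(\Rightarrow)$ direction of Theorem~\ref{symplectic} identifies $\beta$ with $\omega\circ\pi_*$, and the transport from $p$ to $\tilde b$ via $\eta$ is sound since, as you verify, the normalization $(\theta\circ\gamma).\gamma\approx\delta^+$ determines the class of $\gamma$ uniquely (the ambiguity being measured by $(\delta^+)^2$). Your care with the homotopy bookkeeping, in particular that $\theta$ sends $s^+_{\tilde b}$ to $-s^+_{\tilde b}$ so that $(\theta\circ s^+).s^+=\delta^+$, is precisely the point that makes the statement more than a tautology.
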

	
	\begin{teo}
		If $A$ is Hermitian, then the moduli space of framed (twisted) maximal $\Sp_2(A,\sigma)$-local systems on $S$ is homeomorphic to:
		$$\left((A^\sigma_+)^{-2\chi(\bar S)+p}\times (A^\times)^{-2\chi(\bar S)+2p-1+\sum n_i}\right)/A^\times$$
		where $A^\times$ acts componentwisely by conjugation on $(A^\times)^{-2\chi(\bar S)+2p-1+\sum n_i}$ and by congruence on $(A^\sigma_+)^{-2\chi(\bar S)+p}$.
	\end{teo}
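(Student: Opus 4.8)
The plan is to deduce the statement from the parametrization of arbitrary framed twisted $\Sp_2(A,\sigma)$-local systems established in Theorem~\ref{sympectic_local_systems}, by isolating which of those tuples correspond to \emph{maximal} systems. A preliminary observation, which also explains why the statement is not phrased relative to a fixed triangulation, is that a maximal framed twisted $\Sp_2(A,\sigma)$-local system is automatically transverse to \emph{every} ideal triangulation: maximality is equivalent to the framing flags attached to the punctures of $S$ forming a maximal (positive) configuration of isotropic $A$-lines (in the sense of \cite{BIW,AGRW}), and such tuples are in particular pairwise transverse, so $\mathcal T$-transversality holds for each $\mathcal T$. Fixing an ideal triangulation $\mathcal T$, we may thus apply Theorem~\ref{sympectic_local_systems}: a $\mathcal T$-transverse framed twisted $\Sp_2(A,\sigma)$-local system $\mathcal L$ is the same datum as the abelianized twisted $A^\times$-local system $\mathcal E$ on $\Sigma$ with its non-degenerate parallel skew-Hermitian pairing $\beta$, and the proof of that theorem encodes this datum — up to simultaneous conjugation and congruence by one element of $A^\times$ — by holonomies $a_i\in A^\times$ along the lifted generators $\gamma_i^1$ of Corollary~\ref{counting}(1), together with one element $\beta_{\tilde b'}(x,P_{s^+}x)\in(A^\sigma)^\times$ for each ramification point $b'$ of $\pi\colon\Sigma\to S$ (this element being $a_0$ at the base ramification point and $a_0a_\xi$ for the loops $\xi$ of Corollary~\ref{counting}(2)).

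Next I would translate maximality into this language. By the Proposition stated just above, the Kashiwara--Maslov index of the isotropic triple carried by the triangle of $\mathcal T$ lying under a ramification point $b'$ equals the signature of $\beta_{\tilde b'}(x,P_{s^+}x)$; since the triangles of $\mathcal T$ correspond bijectively to the ramification points of $\pi$ and the defining bound for maximality is additive over these triangles, $\mathcal L$ is maximal exactly when each such index is maximal, that is, exactly when $\beta_{\tilde b'}(x,P_{s^+}x)\in A^\sigma_+$ for every ramification point $b'$. Equivalently, in the parametrization above, maximality amounts to requiring $a_0\in A^\sigma_+$ and $a_0a_\xi\in A^\sigma_+$ for all $\xi$, while leaving the $a_i$ free. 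Hence the moduli space of maximal framed twisted $\Sp_2(A,\sigma)$-local systems is cut out of the description of Theorem~\ref{sympectic_local_systems} by replacing each $(A^\sigma)^\times$-factor (one pairing value per triangle of $\mathcal T$) by the open cone $A^\sigma_+$: structurally one $A^\sigma_+$-factor for each of the $-2\chi(\bar S)+2p+\sum n_i$ triangles, one $A^\times$-factor for each of the $1-\chi(\bar S)+p$ free generators of $\pi_1(S)$, modulo the diagonal conjugation--congruence action of $A^\times$.

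It remains to rewrite this in the displayed normal form, and this bookkeeping is the crux. The key input is that for Hermitian $(A,\sigma)$ the cone $A^\sigma_+$ is a single congruence class, the class of $1$ (see \cite{ABRRW}): the congruence action on a distinguished $A^\sigma_+$-factor is then transitive with stabilizer $U_{(A,\sigma)}$, so that factor can be normalized to $1$ at the cost of reducing the acting group to $U_{(A,\sigma)}$, and re-expanding back to the full $A^\times$-action trades the vanished $A^\sigma_+$-factor for the matching count of $A^\times$-factors, the dimensions balancing because $\dim A=\dim A^\sigma+\dim A^{-\sigma}$ and $\dim U_{(A,\sigma)}=\dim A^{-\sigma}$. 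Re-expressing the surviving exponents via Riemann--Hurwitz (Proposition~\ref{prop:topology_Sigma}) then yields the homeomorphism type $\bigl((A^\sigma_+)^{-2\chi(\bar S)+p}\times(A^\times)^{-2\chi(\bar S)+2p-1+\sum n_i}\bigr)/A^\times$. I expect the main obstacle to be precisely this reorganization: one must check that the congruence normalization of the distinguished factor can be chosen to depend continuously on the data and compatibly with the conjugation action on the holonomies, so that the induced identification of moduli spaces is a homeomorphism and not merely a bijection — the other identifications are visibly bicontinuous, being given by explicit holonomy and pairing formulae. The decorated maximal case runs along the same lines, using instead the extension of $\mathcal E$ over $\bar\Sigma$ from Section~\ref{sec:ab_decorated}.
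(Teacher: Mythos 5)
Your core argument is the same as the paper's: invoke the parametrization of Theorem~\ref{sympectic_local_systems}, observe (via the proposition identifying the Kashiwara--Maslov index of the triangle under a ramification point with the signature of the pairing value $\beta_{\tilde b'}(x,P_{s^+}x)$, i.e.\ of $a_0$ or $a_0a_\xi$) that maximality is exactly the condition that each of these elements has maximal signature, and use the fact that the elements of maximal signature in $A^\sigma$ are precisely the elements of $A^\sigma_+$. Up to that point you match the paper, and your preliminary remark that maximality forces transversality with respect to every ideal triangulation is a point the paper silently assumes; making it explicit is a genuine improvement, since the maximal theorem, unlike Theorem~\ref{sympectic_local_systems}, is not stated relative to a fixed $\mathcal T$.

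The gap is your final ``reorganization'' step. The paper's proof stops at the substitution $(A^\sigma)^\times\rightsquigarrow A^\sigma_+$ and keeps the exponents of Theorem~\ref{sympectic_local_systems} unchanged; it does not normalize any factor to $1$, reduce the acting group to $U_{(A,\sigma)}$, or convert a cone factor into $A^\times$-factors. Your proposed trade is not a valid argument as written: a homeomorphism $(X\times Y)/G\cong(X'\times Y')/G$ cannot be deduced from ``the dimensions balancing,'' and transitivity of congruence on $A^\sigma_+$ with stabilizer $U_{(A,\sigma)}$ gives at best a homeomorphism with a $U_{(A,\sigma)}$-quotient of a slice, not a re-expansion into extra $A^\times$-factors. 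Moreover the bookkeeping you are trying to reach cannot be made consistent: the number of pairing values is one per triangle, i.e.\ $-2\chi(\bar S)+2p+\sum n_i$, and the number of free holonomies is $1-\chi(\bar S)+p$, so the displayed exponents $-2\chi(\bar S)+p$ and $-2\chi(\bar S)+2p-1+\sum n_i$ do not arise from any exchange of factors (the discrepancies in the two slots are not even equal). Comparing with the introduction, where the maximal theorem has exactly the same exponents as the non-maximal one with $(A^\sigma)^\times$ replaced by $A^\sigma_+$, the exponents in the body statement appear to be a misprint rather than something requiring the normalization you attempt; the correct proof is the direct substitution, and the extra step you flag as ``the crux'' should be deleted rather than repaired.
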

	
	\begin{proof}
		Following the notation of the proof of Theorem~\ref{sympectic_local_systems}, notice that the signature of $a_0\in A^\sigma$ agrees with the Kashiwara-Maslov index of the oriented triangle where the ramification point $\pi(b)\in S$ lies, and the signature of $a_0a_{\xi_{\tilde b'}}\in A^\sigma$ agrees with the Kashiwara-Maslov index of the oriented triangle where the ramification point $\pi(b')\in S$ lies. A twisted symplectic local system is maximal if and only if Kashiwara-Maslov indices of all oriented triangles are maximal. So we obtain the statement of the theorem.
	\end{proof}
	
	\begin{rem}
		The results of this and previous sections agree with the results from~\cite{GRW} obtained using different techniques (see also Remark~\ref{rk:comp_tec}).
	\end{rem}
	
	\subsection{\texorpdfstring{$\mathcal A$}{A}-coordinates for symplectic local systems}
	
	Since $\Sp_2(A,\sigma)$ is a subgroup of $\GL_2(A)$, the $\mathcal{A}$-coordinates defined in section \ref{sec:A-coord}, a twisted symplectic local system have well-defined $\mathcal{A}$-coordinates, and because of the additional structure of symplectic local systems, they satisfy additional relations. The following proposition is immediate:
	
	\begin{prop}
		Let $\mathcal L\to S$ be a $\mathcal T$-transverse symplectic local system. Let $\gamma$ be an arc of the triangulation $\mathcal T$ from $p\in P$ to $q\in P$. Then $a_\gamma=\omega(v_q,v_p)$. In particular $a_{\bar\gamma}=-\sigma(a_\gamma)$.
	\end{prop}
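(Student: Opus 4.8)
The plan is to unwind the definition of the $\mathcal{A}$-coordinate $a_\gamma$ from Section~\ref{sec:A-coord} and then evaluate the symplectic form $\omega$ directly; no abelianization machinery is needed here. First I would fix the setup: trivialize $\mathcal{L}$ over $T'\gamma$ so that $v_p$ and $v_q$ appear as flat sections spanning transverse $A$-lines $L_p=\Span(v_p)$ and $L_q=\Span(v_q)$, and $w_q$ appears as a flat section of the quotient $\mathcal{L}/L_q$. Pick any lift $\tilde w_q$ of $w_q$ to $\mathcal{L}$; then $(v_q,\tilde w_q)$ is a basis of $\mathcal{L}$ over $T'\gamma$, and by definition of $a_\gamma$ as the matrix of the projection $L_p\hookrightarrow\mathcal{L}\to\mathcal{L}/L_q$ in the bases $v_p$, $w_q$, we have the identity $v_p=\tilde w_q a_\gamma+v_q b$ for some $b\in A$, valid at any point of $T'\gamma$ since all the data in sight are parallel.

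Next I would apply $\omega(v_q,\,\cdot\,)$ to this identity. Using $A$-sesquilinearity of $\omega$ in the second argument and the two symplectic decoration conditions $\omega(v_q,v_q)=0$ and $\omega(v_q,w_q)=1$, this yields
\[
\omega(v_q,v_p)=\omega(v_q,\tilde w_q)\,a_\gamma+\omega(v_q,v_q)\,b=a_\gamma .
\]
Here one invokes the remark following the definition of a symplectic decoration: since $\omega(v_q,v_q)=0$, the quantity $\omega(v_q,\tilde w_q)$ does not depend on the chosen lift $\tilde w_q$ of $w_q$ and equals $\omega(v_q,w_q)=1$. This proves $a_\gamma=\omega(v_q,v_p)$.

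For the last assertion I would apply the formula just proved to the reversed arc $\bar\gamma$, which runs from $q$ to $p$, with decorating data $v_q$ at its source and $v_p,w_p$ at its target: this gives $a_{\bar\gamma}=\omega(v_p,v_q)$. Since the standard symplectic form $\omega$ is skew-Hermitian, i.e.\ $\omega(x,y)=-\sigma(\omega(y,x))$, we conclude $a_{\bar\gamma}=\omega(v_p,v_q)=-\sigma(\omega(v_q,v_p))=-\sigma(a_\gamma)$.

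The proof is short and essentially a one-line computation; there is no real obstacle, and the only points requiring a moment of care are the independence of $\omega(v_q,w_q)$ from the lift of $w_q$ (which is exactly the cited remark) and keeping track of the skew-Hermitian sign convention when passing from $\gamma$ to $\bar\gamma$.
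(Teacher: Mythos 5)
Your proof is correct and follows essentially the same route as the paper: write $v_p=\hat w_q a_\gamma + v_q r$ from the definition of the $\mathcal A$-coordinate, pair with $v_q$ under $\omega$, and use sesquilinearity together with $\omega(v_q,v_q)=0$ and $\omega(v_q,w_q)=1$. Your explicit derivation of $a_{\bar\gamma}=-\sigma(a_\gamma)$ from the skew-Hermitian property is the intended (and in the paper, unstated) one-line completion.
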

	
	\begin{proof}
		By definition of non-commutative $\mathcal A$-coordinates, $v_p\in L_q$ projects to $w_q a_\gamma\in\mathcal L/L_q$, i.e. for some lift $\hat w_q\in A^2$ of $w_q$, $v_p=\hat w_q a_\gamma+v_q r$ for some $r\in A$. Therefore, $\omega(v_q,v_p)=\omega(v_q,\hat w_q a_\gamma+v_q r)=\omega(v_q,\hat w_q) a_\gamma=a_\gamma$.
	\end{proof}
	
	From Proposition~\ref{eq:triangle_relation} follows:
	
	\begin{cor}
		For each oriented triangle $T:=(\gamma_1,\gamma_2,\gamma_3)$ of $\mathcal{T}$, we have $\beta_T:=a_{\gamma_3}a_{\bar\gamma_2}^{-1}a_{\gamma_1}\in A^\sigma$.
		
		If $A$ is Hermitian, the signature of $\beta_T$ agrees with the Kashiwara-Maslov index of $T$.
		
		If $\beta_T\in A^\sigma_+$ for all oriented triangles $T$ of $\T$, then the decorated local system is maximal.
	\end{cor}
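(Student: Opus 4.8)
The plan is to treat the three assertions in turn, the first two resting on identifying $\beta_T$ with the Kashiwara--Maslov map of Section~\ref{sec:Kashiwara}. Write $p_1,p_2,p_3$ for the punctures of the triangle $T$ in the cyclic order compatible with its orientation, $L_i=L_{p_i}$ for the associated parallel isotropic subbundles, and restrict everything to a fibre of $T'\tau$. First I would note that, expressed in the frames of $L_1$ and of $\mathcal L/L_1$ furnished by the decoration sections $v_{p_1}$ and $w_{p_1}$, the word $a_{\gamma_3}a_{\bar\gamma_2}^{-1}a_{\gamma_1}$ is precisely the matrix of the composite projection $\mu_1^{23}=a_{13}a_{23}^{-1}a_{21}\colon L_1\to\mathcal L/L_1$: by the preceding proposition each factor $a_\gamma$ is the projection $L_{s(\gamma)}\to\mathcal L/L_{t(\gamma)}$ read through the decoration frames, and the codomain frame of one factor matches the domain frame of the next as one goes around $T$. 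Using the symplectic normalisation $\omega(v_{p_1},w_{p_1})=1$ this rewrites as $\beta_T=\omega(v_{p_1},\mu_1^{23}(v_{p_1}))$, the element whose signature is by definition the Kashiwara--Maslov index of $(L_1,L_2,L_3)$.

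For the first assertion it then suffices to show $\sigma(\beta_T)=\beta_T$. I would do this directly from the triangle relation \eqref{eq:triangle_relation}: apply $\sigma$ to the word $a_{\gamma_3}a_{\bar\gamma_2}^{-1}a_{\gamma_1}$, use $\sigma(ab)=\sigma(b)\sigma(a)$ and $\sigma(a^{-1})=\sigma(a)^{-1}$ together with the identities $\sigma(a_\gamma)=-a_{\bar\gamma}$ and $\sigma(a_{\bar\gamma})=-a_\gamma$ from the preceding proposition to turn $\sigma(\beta_T)$ into $a_{\bar\gamma_1}a_{\gamma_2}^{-1}a_{\bar\gamma_3}$ up to an overall sign, and then invoke \eqref{eq:triangle_relation} to fold that word back onto $\beta_T$; the sign count through the three inversions and the anti-involution gives $\sigma(\beta_T)=\beta_T$, hence $\beta_T\in A^\sigma$ --- and in fact $\beta_T\in (A^\sigma)^\times$, since $\mu_1^{23}$ is an isomorphism. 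This is the step I expect to be the main obstacle, as it is exactly where the twisted conventions (the half-turn around the fibre built into the representatives $\tau_\gamma$, $\tau_{\bar\gamma}$) interact with the anti-involution, and the signs have to be tracked carefully against the normalisation $a_{\bar\gamma}=-\sigma(a_\gamma)$.

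The second assertion is then immediate: with $\beta_T=\omega(v_{p_1},\mu_1^{23}(v_{p_1}))$ and the definition of the Kashiwara--Maslov index as the signature of this element, $\sgn(\beta_T)$ is the Kashiwara--Maslov index of $T$; I would also cross-check this against the proposition of the previous subsection, which expresses that index as $\sgn(\beta_{\tilde b}(x,P_{s^+}(x)))$, using the proposition of Section~\ref{sec:Kashiwara} that identifies parallel transport once around the fibre at a branch point with $\mu_1^{23}$, so that $\beta_{\tilde b}(x,P_{s^+}(x))$ is congruent to $\beta_T$ and congruent elements of $A^\sigma$ share the same signature. Finally, for the third assertion, recall from~\cite{ABRRW} that for Hermitian $A$ the cone $A^\sigma_+$ is precisely the set of elements of $A^\sigma$ of maximal signature; so if $\beta_T\in A^\sigma_+$ for every oriented triangle $T$ of $\mathcal T$, then by the second assertion every Kashiwara--Maslov index of the decorated local system is maximal, and by the characterisation of maximality established above --- a twisted symplectic local system is maximal if and only if all its Kashiwara--Maslov indices are maximal --- the underlying framed local system, and hence the decorated local system, is maximal.
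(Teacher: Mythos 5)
Your route is the one the paper intends: the corollary is stated as an immediate consequence of the triangle relation \eqref{eq:triangle_relation} together with $a_{\bar\gamma}=-\sigma(a_\gamma)$, the identification $\beta_T=\omega(v_{p_1},\mu_1^{23}(v_{p_1}))$ handles the signature claim, and the last claim follows from the criterion (established in the proof of the maximality theorem) that a symplectic local system is maximal iff all Kashiwara--Maslov indices of triangles are maximal. Your reduction of the signature and maximality statements is correct and matches the paper.

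The one point you flag as the main obstacle --- the sign count --- is indeed where the issue sits, and you assert the outcome without carrying it out; carried out literally against the relations as printed, it does \emph{not} close. From $\sigma(a_{\gamma})=-a_{\bar\gamma}$ and $\sigma(a_{\bar\gamma_2}^{-1})=(-a_{\gamma_2})^{-1}=-a_{\gamma_2}^{-1}$ one gets
\[
\sigma(\beta_T)=\sigma(a_{\gamma_1})\,\sigma(a_{\bar\gamma_2}^{-1})\,\sigma(a_{\gamma_3})=(-1)^3\,a_{\bar\gamma_1}a_{\gamma_2}^{-1}a_{\bar\gamma_3}=-\,a_{\bar\gamma_1}a_{\gamma_2}^{-1}a_{\bar\gamma_3},
\]
so if \eqref{eq:triangle_relation} is taken exactly as stated, the conclusion is $\sigma(\beta_T)=-\beta_T$, i.e.\ $\beta_T\in A^{-\sigma}$ --- the opposite of what is claimed. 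The same sign appears if one substitutes $a_\gamma=\omega(v_q,v_p)$ and uses skew-Hermitianity of $\omega$. The count closes only with the signed form of the triangle relation, $a_{\gamma_3}a_{\bar\gamma_2}^{-1}a_{\gamma_1}=-a_{\bar\gamma_1}a_{\gamma_2}^{-1}a_{\bar\gamma_3}$, which is what Lemma~\ref{lem:triang_rel} (equivalently $\mu_1^{23}=-\mu_1^{32}$, as noted in Section~\ref{sec:Kashiwara}) actually produces when one reads the matrices of $\mu_1^{23}$ and $\mu_1^{32}$ in the decoration frames. So your argument is correct in substance, but to make it airtight you must either justify the extra minus sign in \eqref{eq:triangle_relation} (tracking the half-fiber twist built into the representatives $\tau_\gamma,\tau_{\bar\gamma}$) or work directly from Lemma~\ref{lem:triang_rel}; quoting \eqref{eq:triangle_relation} verbatim and asserting the signs work is exactly the step that fails.
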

	
	\begin{rem}
	    Since these additional relations on $\mathcal{A}$-coordinates involve the structure of $(A,\sigma)$, this is not possible to define a corresponding non-commutative algebra for symplectic local systems as in \cite{BR} for $\GL_2(A)$-local systems.
	\end{rem}
	
	\bibliographystyle{alpha}
	\bibliography{bibl}
\end{document}